\tikzset{
	commutative diagrams/.cd, 
	arrow style=tikz, 
	diagrams={>=stealth}
}
\newenvironment{customthm}[1]
{\innercustomthm}
{\endinnercustomthm}
\newenvironment{customconj}[1]
{\innercustomconj}
{\endinnercustomconj}
\def\@tocline#1#2#3#4#5#6#7{\relax
	\ifnum #1>\c@tocdepth 
	\else
	\par \addpenalty\@secpenalty\addvspace{#2}%
	\begingroup \hyphenpenalty\@M
	\@ifempty{#4}{%
		\@tempdima\csname r@tocindent\number#1\endcsname\relax
	}{%
		\@tempdima#4\relax
	}%
	\parindent\z@ \leftskip#3\relax \advance\leftskip\@tempdima\relax
	\rightskip\@pnumwidth plus4em \parfillskip-\@pnumwidth
	#5\leavevmode\hskip-\@tempdima
	\ifcase #1
	\or\or \hskip 1em \or \hskip 2em \else \hskip 3em \fi%
	#6\nobreak\relax
	\dotfill\hbox to\@pnumwidth{\@tocpagenum{#7}}\par
	\nobreak
	\endgroup
	\fi}
\newcounter{marginnote}
\DeclareMathAlphabet{\mathpzc}{OT1}{pzc}{m}{it}
\newtheorem{theorem}{Theorem}[subsection]
\newtheorem{corollary}[theorem]{Corollary}
\newtheorem{lemma}[theorem]{Lemma}
\newtheorem{proposition}[theorem]{Proposition}
\newtheorem{quasi-theorem}[theorem]{Quasi-Theorem}
\theoremstyle{definition}
\newtheorem{definition}[theorem]{Definition}
\newtheorem{warning}[theorem]{$\lightning$ Warning}
\newtheorem{remark}[theorem]{Remark}
\newtheorem{blank remark}[theorem]{}
\newtheorem{not1}[theorem]{Notation}
\newcommand{\CC} {{\mathbb C}}
\newcommand{\PP}{\mathbb{P}}         
\newcommand{\RR} {{\mathbb R}}		
\newcommand{\ZZ} {{\mathbb Z}}		
\newcommand{\cal}{\mathcal}
\def\cM{{\cal M}}
\def\fM{\mathfrak{M}}
\newcommand{\Mbar}{\overline{\cM}\vphantom{\cM}}
\def\trop{\mathsf{trop}}
\newcommand{\Spec}{\operatorname{Spec}}
\def\blfootnote{\xdef\@thefnmark{}\@footnotetext}
\title{Logarithmic Gromov--Witten theory and double ramification cycles}
\date{}
\author{Dhruv Ranganathan {\it \&} Ajith Urundolil Kumaran}
\address{Dhruv Ranganathan \\ Department of Pure Mathematics {\it \&} Mathematical Statistics\\
	University of Cambridge, Cambridge, UK}
\email{\href{mailto:dr508@cam.ac.uk}{dr508@cam.ac.uk}}
\address{Ajith Urundolil Kumaran \\ Department of Pure Mathematics {\it \&} Mathematical Statistics\\
	University of Cambridge, Cambridge, UK}
\email{\href{mailto:au270@cam.ac.uk}{au270@cam.ac.uk}}
\begin{document}

	\maketitle

	\begin{abstract}
		We examine the logarithmic Gromov--Witten cycles of a toric variety relative to its full toric boundary. The cycles are expressed as products of double ramification cycles and natural tautological classes in the logarithmic Chow ring of the moduli space of curves. We introduce a simple new technique that relates the Gromov--Witten cycles of rigid and rubber geometries; the technique is based on a study of maps to the logarithmic algebraic torus. By combining this with recent work on logarithmic double ramification cycles, we deduce that all logarithmic Gromov--Witten pushforwards, for maps to a toric variety relative to its full toric boundary, lie in the tautological ring of the moduli space of curves. A feature of the approach is that it avoids the as yet undeveloped logarithmic virtual localization formula, instead relying directly on piecewise polynomial functions to capture the structure that would be provided by such a formula. The results give a common generalization of work of Faber--Pandharipande, and more recent work of Holmes--Schwarz and Molcho--Ranganathan. The proof passes through general structure results on the space of stable maps to the logarithmic algebraic torus, which may be of independent interest. 
	\end{abstract}
	
	\vspace{0.3in}
	\setcounter{tocdepth}{1}
	\tableofcontents
	
	\section*{Introduction}
	
	The logarithmic Chow theory of the moduli space of curves $\Mbar_{g,n}$ concerns the cycle theory of the system of birational models of $\Mbar_{g,n}$ obtained by blowups along boundary strata, considered simultaneously. It has attracted significant recent interest~\cite{HMPPS,HPS19,HS21,MPS21,MR21}. Logarithmic Gromov--Witten theory is a basic source of classes in this ring, parallel to the manner in which Gromov--Witten theory is a source of ordinary Chow classes in $\Mbar_{g,n}$. The simplest of these classes arise when the target is a pair $(X,D)$ where $X$ is a smooth projective toric variety and $D$ is the full toric boundary. The purpose of this article is to explain how to relate these logarithmic Gromov--Witten cycles to the logarithmic double ramification cycle, and deduce that the cycles lie in the tautological part of the logarithmic Chow ring of $\Mbar_{g,n}$.
	
	\subsection{Logarithmic Chow rings} We recall the {\it logarithmic Chow ring} of $\Mbar_{g,n}$. Given a stack $B$ with a normal crossings divisor $E$, a {\it simple blowup} is the blowup of $B$ at a smooth stratum. The blowup has a normal crossings divisor given by the  reduced preimage of $E$. A {\it simple blowup sequence} is obtained by successive simple blowups. If $B'\to B$ is a simple blowup sequence, pullback gives rise to an injective homomorphism on Chow rings. The {\it logarithmic Chow ring} is the colimit of Chow rings over all such blowups. The logarithmic Chow ring of $\Mbar_{g,n}$ with respect to the divisor of singular curves is denoted $\mathsf{logCH}^\star(\Mbar_{g,n})$. See~\cite[Section~3]{MPS21} for an introduction. There are two simple ways to produce classes in the logarithmic Chow ring of $B$. 
	
	\begin{enumerate}[]
		\item {\it (Homological)}. Let $Z$ be another simple normal crossings pair and $Z\to B$ a proper morphism of pairs. We produce a class in the Chow ring of $X$ by pushing forward the class $[Z]$. If $B'\to B$ is a simple blowup sequence, then the strict transform of $Z$ gives a class in the Chow ring of $B'$ by the same construction. The class on $B'$ and $B$ are related by pushforward along $B'\to B$, but are {not necessarily} related by pullback. However, for sufficiently fine blowups, the classes stabilize under pullback and so define a class in the logarithmic Chow ring of $B$. See~\cite[Sections~2,3]{MR21}. \smallskip
		\item {\it (Cohomological)} Associated to $B$ is a generalized cone complex $\Sigma(B)$. To each subdivision of $\Sigma(B)$ we associate the ring of {\it strict piecewise polynomials}. This is a very concrete ring: if $\Sigma(B)$ is a simplicial fan, it is the face algebra of the fan. The ring of piecewise polynomials $\mathsf{PP}(\Sigma(B))$ is the colimit of the rings obtained from all subdivisions. It has a ring homomorphism to the logarithmic Chow ring of $B$. The classes can roughly be thought of as polynomials in the Chern classes of bundles built from the strata of $B$ and its blowups. See~\cite{HS21,MPS21,MR21} for details. 
	\end{enumerate}

	\subsection{Logarithmic Gromov--Witten cycles} Let $\mathsf M_\Uplambda(X)$ be the moduli space of logarithmic stable maps to the pair $(X,D)$ where $X$ is a projective toric variety and $D$ is the full toric boundary. The discrete data $\Uplambda$ records the genus $g$, number of markings $n$, and their contact orders with the toric boundary~\cite{AC11,Che10,GS13}. The moduli space is equipped with a basic diagram:
	\[
	\begin{tikzcd}
	\mathsf M_\Uplambda(X)\arrow{d}{\pi}\arrow{r}{\mathsf{ev}}& \mathsf{Ev}_\Uplambda(X)\\
	\Mbar_{g,n}.& 
	\end{tikzcd}
	\]
	The evaluation space $\mathsf{Ev}_\Uplambda$ is constructed as follows. For a marked point $p_i$ and a divisor $D_j$ there is an associated non-negative contact order $c_{ij}$. For a fixed point $p_i$, the intersection of divisors with which $p_i$ has positive contact order is a stratum of $X$. If the contact order is $0$ with all divisors the stratum is defined to be $X$. The evaluation space is the product of these strata associated taken over all marked points. 
	
	A \textit{primary logarithmic Gromov--Witten cycle} is a class of the form
	\[
	\pi_\star\left(\mathsf{ev}^\star(\gamma)\cap [\mathsf M_\Uplambda(X)]^{\mathsf{vir}}\right) \ \text{in  } \mathsf{logCH}^\star(\Mbar_{g,n}).
	\]
	The lift to logarithmic Chow is explained in Section~\ref{sec: log-cycle}. See also the forthcoming survey~\cite{HMPW}. 
	
	We would like to use the notion of a {\it tautological subring} of $\mathsf{logCH}^\star(\Mbar_{g,n})$. A few different definitions of this have been proposed, but all include: (i) all tautological classes pulled back from $\mathsf{CH}^\star(\Mbar_{g,n})$, and (ii) classes from piecewise polynomials. A class is {\it tautological} in logarithmic Chow if it is a polynomial combination of these. 
	
	\begin{customthm}{A}
		Primary logarithmic Gromov--Witten cycles of toric pairs $(X,D)$ lie in the tautological subring of the logarithmic Chow ring $\mathsf{logCH}^\star(\Mbar_{g,n})$. In particular, after pushing forward to the standard Chow ring $\mathsf{CH}^\star(\Mbar_{g,n})$, the cycles lie in the tautological subring. 
	\end{customthm}
	
	Note that descendants at marked points with trivial contact order are pulled back from $\Mbar_{g,n}$, see~\cite[Section~3]{MR16}. By the projection formula, these can be included in the theorem. Other descendants are not pulled back, but the correction terms are boundary strata on the moduli space of maps, so we expect they can be treated inductively. We do not attempt this as we feel it would obfuscate the geometry in our arguments. 
	
	The above result is a parameterized version of the results of Holmes--Schwarz and Molcho--Ranganathan~\cite{HS21,MR21}. When $X$ is $\mathbb P^1$, Faber and Pandharipande proved this statement in their study of relative maps and tautological classes~\cite{FP}. 
	
	\subsection{Enumerative invariants via piecewise polynomials} The theorem above is a corollary of the following stronger result, which relates logarithmic Gromov--Witten cycles for toric pairs to logarithmic double ramification cycles on the moduli space of curves, as studied in~\cite{HMPPS,HS21,MR21}. 
	
	The theorem consists of two pieces. First, in the main text, we associate to every cohomology class $\gamma$ on $\mathsf{Ev}_\Uplambda(X)$ a logarithmic cohomology class $\gamma_{\mathsf{rub}}$ on $\Mbar_{g,n}$. It is given by a piecewise polynomial on the tropical moduli space of curves~\cite{MPS21} and can be calculated by a combinatorial procedure. Second, the discrete data $\Uplambda$ determines a class in the logarithmic Chow ring of $\Mbar_{g,n}$ called the {\it toric contact cycle} or {\it higher double ramification cycle}. It is tautological in the sense above, and on restriction to the moduli of smooth curves, it is the locus of curves that admit a map to a toric variety with discrete data $\Uplambda$, see~\cite{HS21,MR21}. We denote it $\mathsf{TC}_g(\Uplambda)$. 
	
	\begin{customthm}{B}\label{thm:B}
		There is an equality of classes
		\[
		\pi_\star\left(\mathsf{ev}^\star(\gamma)\cap [\mathsf M_\Uplambda(X,D)]^{\mathsf{vir}}\right)  = \gamma_{\mathsf{rub}}\cap \mathsf{TC}_g(\Uplambda), \ \ \textrm{in     } \mathsf{logCH}^\star(\Mbar_{g,n})
		\]
		where $\mathsf{TC}_g(\Uplambda)$ is the logarithmic toric contact cycle associated to the data $\Uplambda$. 
	\end{customthm}
	
	The result is a replacement, in the setting of logarithmic maps, for virtual localization on maps to projective space, proved by Graber--Pandharipande~\cite[Section~4]{GP99}. Parallel to that result, this one gives a complete in-principle solution to the logarithmic Gromov--Witten theory of toric varieties, relative to their full toric boundary.
	
	The class $\mathsf{TC}_g(\Uplambda)$ is a product of logarithmic double ramification cycles in the ring $\mathsf{logCH}^\star(\Mbar_{g,n})$. A formula for the cycles has recently been established~\cite{HMPPS} and together with the \texttt{admCycles} package~\cite{DSvZ} this gives a practical route to calculations and reduces the computation of logarithmic Gromov--Witten invariants for toric varieties to tautological integrals. Although the complexity of the procedure is significant, this is the first known procedure that completely determines the logarithmic Gromov--Witten theory of a toric variety from known calculations. 
	
	One can approach the logarithmic Gromov--Witten theory of toric varieties via many techniques, several based on tropical curves. See~\cite{Bou19,CJMR2,Gro14,Gro15,MR16,NS06,Par17,Wu21} and the references therein. Via tropical geometry, the result above can be seen to have concrete consequences. For example, it can be used to calculate the Severi degrees of $\mathbb P^2$ and the Hurwitz numbers of $\mathbb P^1$, see Section~\ref{sec: severi-degrees}.
	
	The calculation of the class $\gamma_{\mathsf{rub}}$ is explained in the proof of the theorem, but we summarize it here. In Section~\ref{sec: evaluation-spaces} we construct a toric variety $\mathsf{Ev}_\Uplambda^{\mathsf{rub}}(X)$ together with an equivariant, flat, and proper morphism $\mathsf{Ev}_\Uplambda(X)\to \mathsf{Ev}_\Uplambda^{\mathsf{rub}}(X)$. Given a class $\gamma$ on the evaluation space, we can push it forward and then lift it to an equivariant class on $\mathsf{Ev}_\Uplambda^{\mathsf{rub}}(X)$. Interpreting this as a class $\gamma'$ on the Artin fan, we argue that the morphism from $\mathsf{TC}_g(\Uplambda)$ to this Artin fan factors through a blowup of the moduli space of curves. The pullback of $\gamma'$ to this blowup is a piecewise polynomial $\gamma_{\mathsf{rub}}$. The equality in the theorem is proved at the end of the final section.
	
	\subsection{Conjectures}\label{sec: conjectures} The ordinary Gromov--Witten cycles of any toric variety lie in the tautological ring of the moduli space of curves by the virtual localization formula. This is perhaps the first important consequence of the formula~\cite{GP99}. A parallel formula in logarithmic Gromov--Witten theory has not yet materialized, although progress has been made by Graber~\cite{Gra19}. Even if such a formula appears, it seems likely that its complexity will be similar to that of the degeneration formula~\cite[Theorem~B]{R19}. The result presented here, by comparison, is rather more direct, though it only handles the ``full toric boundary case''. We record the following statements as conjectures, so they might attract some interest. We begin with a weak version.
	
	\begin{customconj}{C}
		Let $X$ be a smooth projective toric variety and let $D$ be a subset of the toric boundary. Logarithmic Gromov--Witten cycles lie in the tautological subring of the logarithmic Chow ring $\mathsf{logCH}^\star(\Mbar_{g,n})$. 
	\end{customconj}
	
	The conjecture above would likely be a consequence of a logarithmic virtual localization formalism. The next, stronger conjecture, is unlikely to fall to localization alone.
	
	\begin{customconj}{D}
		Let $X$ be a smooth projective toric variety and $D$ a simple normal crossings divisor. Logarithmic Gromov--Witten cycles lie in the tautological subring of the logarithmic Chow ring $\mathsf{logCH}^\star(\Mbar_{g,n})$. 
	\end{customconj}
	
	The case where $X$ is $\mathbb P^r$ and $D$ is a normal crossings union of hyperplanes is already very interesting, due to the interplay with the theory of matroids~\cite{RU22}. If $X$ is $\PP^1$ the conjecture holds by work of Faber--Pandharipande~\cite{FP}. In addition to being stronger, it is slightly less clear that the result should hold. One might believe it because of {\it descendant reconstruction}, i.e. the expectation that numerical logarithmic Gromov--Witten theory is determined to leading order by the numerical ordinary Gromov--Witten theory with descendants. This is a theorem in the smooth pair case~\cite{MP06}. If the transfer to the ordinary theory can be made and the subleading terms controlled, virtual localization would yield the result. Orbifold methods may provide another route to the use of torus actions, see for instance~\cite{BNR22}. 
	
	The point of our approach here is to avoid localization, but torus actions do lurk in the background. Torus localization is still a crucial part of the study of the double ramification cycle~\cite{HMPPS,JPPZ}. Also, in the proof of main results, classes from the evaluation space are replaced by torus equivariant lifts, given by piecewise polynomials on the cone complex of the evaluation space. 
	
	\subsection{Overview} The high level strategy in the paper is simple: apply the Abramovich--Wise birational invariance theorem to replace the toric varieties by products of $\mathbb P^1$'s, then apply the product formula in logarithmic Gromov--Witten theory, and finally reduce to the double ramification cycle via the rubber geometry~\cite{AW,Herr,HMPPS,JPPZ,MR21,R19b}. However, as stated, the strategy is nonsensical.
	
	\subsubsection{Gaps in the naive strategy} The first issue is that birational invariance only relates the virtual classes of curves in $X$ and in $(\mathbb P^1)^r$ by pushforward, but an arbitrary insertion is unlikely to be pulled back along $X\to(\mathbb P^1)^r$ so this doesn't help; this was raised by Abramovich--Wise~\cite[Section~1.4]{AW} as a missing piece in their result. We realize that if one works in the logarithmic Chow ring, the missing insertions become accessible. The relevant strengthening of the Abramovich--Wise result was accidentally recorded in~\cite[Section~3]{R19}.  
	
	The second issue, which is closely related, is that the simplest form of the product formula doesn't hold in logarithmic Gromov--Witten theory. However, it does hold if one works over a blowup of the moduli space of curves, or equivalently in the logarithmic Chow ring~\cite{Herr,R19b}. 
	
	The third issue, which is of a different nature, is that the double ramification cycle deals with the {\it rubber} geometry rather than the rigid geometry, so we need to relate the virtual classes of the rubber and rigid mapping spaces. And even if this is done, the insertions need to be accessed on the rubber geometry, and evaluation spaces for rubber have not previously been considered. 
	
	
	\subsubsection{Torus quotient problems} Another key geometric idea in the paper is the {\it rubber evaluation space} and associated evaluation maps from the space of rubber maps. These have a simple relationship with the corresponding rigid evaluation spaces and maps. 
	
	The basic idea is as follows: just as the rubber moduli space parameterizes maps from curves up to the action of the dense torus $T$ of $X$, the rubber evaluation space should parameterize configurations of points on strata up to the action of $T$. The elementary but crucial observation is that $\mathsf{Ev}_\Uplambda(X)$ should be considered as a single object with the diagonal action of the torus $T$, via the natural action of $T$ on its strata. We are guided here by observations of Carocci--Nabijou~\cite{CN21}. 
	
	A geometric complication in this idea is that while we have spoken about the rubber moduli space of stable maps above, the only sensible meaning for such a space is via an intersection problem in the Picard stack, using~\cite{MR21} and building on~\cite{Hol17,MW17}. This needs to be related to logarithmic mapping spaces. The mapping space with a toric target appears, on the interior, to be a torus bundle over the rubber. However, the extension of the description to the boundary is not clear, even in rank $1$. The construction of our proposed rubber evaluation spaces presents the same issues: it is straightforward on the interior, where the torus action is free, but complicated in the boundary. 
	
	In principle, the resulting quotient problems are as complicated as the problem of constructing quotients of toric variety by subtori. Even if one assumes a solution to this, the global structure of the map from the space of rigid maps to a toric variety to the space of rubber maps is complicated. 
	
	\subsubsection{Logarithmic algebraic tori} While these quotient problems are subtle, one might expect that by birational invariance results in logarithmic geometry, the combinatorial choices made in the toric quotient problem above are immaterial~\cite{AW,R19}. In order to use this flexibility, we work with the space of stable maps to the {\it logarithmic algebraic torus}. This is a non-representable group-valued functor on logarithmic schemes that is, in some sense, birational to the algebraic torus, see~\cite{RW19}. Although not representable, it is compact, and therefore quotient constructions by this group are straightforward. Stable maps to the logarithmic torus bundles arise implicitly throughout logarithmic Gromov--Witten theory, via the expansions in~\cite{R19}. We believe they will find use wherever immaterial non-canonical polyhedral choices need to be made. 
	
	Allowing logarithmic tori, there are essentially no difficulties in setting up the quotient problems. We easily construct the space of rubber maps, the rubber evaluation space, and compare with the rigid geometry. Once everything is constructed, birational modifications can be made to make things representable. We then move the problem into the logarithmic Chow ring of $\Mbar_{g,n}$. 
	
	\subsection{Broader context} A question motivating this work is whether Gromov--Witten cycles always lie in the tautological singular cohomology of the moduli space of curves; this is a central conjecture in the subject going back to work of Levine--Pandharipande~\cite[Section~0.8]{LP09}. The reader may refer to~\cite{ABPZ,Jan17,MP06} for a sampling of results. The statement is interesting for several reasons. For one, the tautological ring is a very small part of the cohomology ring, so the constraint is strong. Second, the tautological ring is highly structured: for example, it has an additive set of generators with a conjecturally complete set of relations~\cite{PPZ15}. And finally, it is sufficiently well-understood that it can be explored using computer algebra methods~\cite{DSvZ}. 
	
	A promising approach to this conjecture is via the logarithmic degeneration methods in~\cite{R19} and the study of vanishing cycles in~\cite{ABPZ}. The crucial inputs into this are the study of toric varieties and toric fibrations over logarithmic targets. We initiate such a study for toric varieties with respect to the full toric boundary, and will further develop these methods to study toric fibrations in future work. The conjectures in Section~\ref{sec: conjectures} are central to this question.
	
	The results here fit into a broader study of the logarithmic tautological ring, which has seen a flurry of recent activity~\cite{HMPPS,HPS19,HS21,MPS21,MR21}. The present paper illustrates the utility of understanding the cycle theory of this ring, showing for example that the multiplicative structure of logarithmic tautological classes recovers the enumerative geometry of all toric pairs. It is parallel to the recovery of absolute Gromov--Witten invariants by Hodge integrals. 
	
	While the present paper gives a complete in-principle solution to the logarithmic Gromov--Witten theory of toric varieties, it an interesting problem to make the result explicit for various classes of invariants. For example, double Hurwitz numbers are explored in~\cite{CMR22}. More recently, Kennedy-Hunt, Shafi, and the second author use the perspective in the present paper to give explicit formulas for the stationary descendant Gromov--Witten theory of $S\times\mathbb A^1$, where $S$ is a toric surface relative to its toric boundary~\cite{KHSUK}, recovering the formulas of Bousseau~\cite{Bou19}. 
	
	
	A more speculative question is the link to traditional virtual localization. When the logarithmic boundary is empty, localization describes the invariants via Hodge integrals on the moduli space of curves. When the boundary is toric, our methods here describe the invariants via double ramification integrals. It is likely that future applications will demand that this gap is bridged.
	
	\subsection{User's guide} In Section~\ref{sec: log-torus} we recall the logarithmic torus and discuss stable maps into it. The section includes a discussion of stable maps to toric varieties and Artin fans. The main content of the section beyond the construction is the relationship between the rigid and rubber mapping spaces. In Section~\ref{sec: evaluation-spaces} we construct evaluation spaces for stable maps to logarithmic tori, and for its rubber variant. In Section~\ref{sec: main-theorems} we prove the main results. After recalling the construction of logarithmic Gromov--Witten cycles, we compare the rigid/rubber virtual structures, and use this, with the logarithmic intersection yoga of~\cite{MR21}, to deduce the results. 
	
	Throughout the paper, we work with fine and saturated logarithmic schemes and stacks, and with stacks over this category. The logarithmic structures will typically be defined in the \'etale topology, with the exception of several Artin stacks that appear, whose logarithmic structures are defined in the lisse \'etale or fppf sites. Fiber squares will be understood to be in the category of fine and saturated logarithmic schemes (or stacks, as appropriate).
	
	\subsection*{Acknowledgements} We are grateful to D. Abramovich, L. Battistella, D. Holmes, D. Maulik, R. Pandharipande, J. Schmitt, and N. Nabijou for many instructive discussions on rubber geometry and double ramifications over the years. R. Cavalieri, H. Markwig, S. Molcho, and J. Wise are owed special thanks: the collaboration~\cite{CMR22} provided inspiration for the statements proved here, S. Molcho taught us a great deal about logarithmic intersection theory in~\cite{MR21}, and the first author learned about the logarithmic torus from J. Wise in discussions surrounding~\cite{RSW17B,RW19}. Finally, we thank F. Carocci, D. Holmes, R. Pandharipande, and the anonymous referee for very helpful comments on earlier drafts, and S. Molcho and J. Wise for helping us sharpen some proofs in the present version.
	
	\noindent
	D.R. is supported by EPSRC New Investigator grant EP/V051830/1. 
	
	\section{Stable maps to the logarithmic torus}\label{sec: log-torus}
	
	The main constructions in this paper are based on torus quotient problems. Quotient problems for algebraic tori can be subtle, even for quotients of a toric variety by a subtorus of the dense torus~\cite{KSZ91}. The source of the complexity is the non-compactness of the torus. In logarithmic geometry, there is a canonical group compactifying the $\mathbb G_m$, and quotients by it are better behaved. We use this and birational invariance for logarithmic maps~\cite{AW} to prove our main results.
	
	\subsection{Subdivisions and Artin fans} A logarithmic morphism $Y\to X$ with $X$ logarithmically smooth is a {\it subdivision} if it is proper, birational, and logarithmically \'etale. More generally, if $X$ is any logarithmic scheme, $Y\to X$ is a subdivision if, locally on $X$, it is the strict base change of a subdivision of a logarithmically smooth scheme. We do not require the map to be representable by schemes, but only by Deligne--Mumford stacks, so generalized root constructions are permitted; this is what is termed as a {\it logarithmic modification} by Abramovich--Wise~\cite{AW}, but is called a {\it logarithmic alteration} by some authors. The relevant subdivisions for us will be globally expressed as pullbacks of subdivisions of logarithmically smooth stacks. 
	
	Let $\sigma$ be a cone in the sense of toric geometry and let $X(\sigma)$ be the associated affine toric variety with torus $T$. The {\it Artin cone} associated to $\sigma$ is the algebraic stack $\mathsf A_\sigma = [X(\sigma)/T]$. It carries a logarithmic structure in the smooth topology, descended from $X(\sigma)$ and has the property that $\mathsf A_\sigma\to\Spec \mathbb C$ is logarithmically \'etale. 
	
	More generally, one can consider stacks $\mathsf A$ with logarithmic structure such that $\mathsf A\to \Spec \mathbb C$ is logarithmically \'etale. A stack $\mathsf A$ of this form is an {\it Artin fan} if it has a strict \'etale cover by Artin cones. Typical examples are stacks $\mathsf A_X = [X/T]$, for $X$ a toric variety. These stacks will arise as targets in logarithmic mapping stacks. We will only need a few basic facts about Artin fans beyond the definition, and refer the reader to the references~\cite{ACMUW,CCUW} which are both well-adapted to our point of view here. 
	
	\subsection{Setup and numerical data}\label{sec: numerica-data} Let $X$ be a smooth and projective toric variety of dimension $r$. Equip it with its toric logarithmic structure. Let $N$ denote the cocharacter lattice of the torus.
	
	We will consider maps from logarithmic curves to the target $X$. The symbol $\Uplambda$ will denote the numerical data of the moduli problem for such maps. If we fix an isomorphism of $N$ with $\ZZ^r$ this is: a genus $g$, a number $n$ of marked points, and a contact order matrix $\underline A$ with $r$ rows and $n$ columns.\footnote{If we do not choose a basis, the contact order matrix should be thought of as a $n$-tuple of vectors in $N$.} We assume the row sums are equal to $0$; the relevant moduli spaces are empty otherwise. The $j^{\textrm{th}}$ column of $\underline A$ should be thought of as attached to $j^{\textrm{th}}$ marked point. 
	
	We pause to explain the contact order. Let $\Sigma$ be the fan of $X$. Fix an index $j$, with $1\leq j\leq n$. The vector $v_j$ is contained in the relative interior of a unique cone, say $\sigma$. Let $u_{i_1},\ldots, u_{i_p}$ be the primitive generators of $\sigma$. Write
	\[
	v_j = \sum_{k=1}^p a_{ji_k}u_{i_k}, \ \ a_{ji_k}\in\mathbb N_{>0}.
	\]
	Given this, we can restrict attention to maps from pointed curves
	\[
	(C,p_1,\ldots,p_n)\to (X,D),
	\]
	where the divisor $D_{i_k}$ corresponding to the generator $u_{i_k}$ has contact order $a_{ji_k}$ with the marked point $p_j$. In this way, the matrix $\underline A$ specifies the contact orders. More generally, we can consider logarithmic maps with these contact data. 
	
	\subsection{Maps to toric varieties and Artin fans}\label{sec: maps-to-tvs} We consider moduli $\mathsf M_\Uplambda(X)$ of logarithmic stable maps to the toric variety $X$ with its toric logarithmic structure. The moduli functor is representable by a proper Deligne--Mumford stack with a logarithmic structure and a virtual class~\cite{AC11,Che10,GS13}. 
	
	Let $\mathsf A_X$ be the stack $[X/T]$ equipped with the logarithmic structure descended from $X$. It is called the {\it Artin fan of $X$}. One can consider moduli of logarithmic maps from {\it prestable curves} to $\mathsf A_X$ with numerical data $\Uplambda$. The spaces are denoted $\mathfrak M_\Uplambda(\mathsf A_X)$ and they are Artin stacks with logarithmic structure, see~\cite{AW}. The natural morphism $\mathsf M_\Uplambda(X)\to \mathfrak M_\Uplambda(\mathsf A_X)$ is strict.
	
	We impose a ``stability condition'' on $\mathfrak M_\Uplambda(\mathsf A_X)$. The idea is that the condition that a prestable map $C\to X$ is stable is visible after projecting to $\mathsf A_X$ -- this is a toric phenomenon. Indeed, stability needs to be examined on rational components with one or two special points. A $1$-pointed component is automatically contracted, for example by the balancing condition. A $2$-pointed component is contracted if and only if the two special points both map to the same locally closed toric stratum.\footnote{The discussion and pictures in~\cite[Section~3]{R15b} may be helpful for the reader.} Thus, the conditions depend only on the dual graph of $C$ and the strata to which the special points map. We can therefore impose the conditions on logarithmic maps $C\to \mathsf A_X$. 
	
	We now do this formally. A prestable logarithmic map
	\[
	C\to\mathsf A_X
	\]
	over a logarithmic point has a combinatorial type~\cite[Section~1]{GS13}. It includes (i) a dual graph $\Gamma$, (ii) a cone of $\Sigma_X$ for each vertex and edge of $\Gamma$, (iii) a primitive direction for each vertex-edge flag, and (iv) a nonnegative weight associated to each such direction. The directions are primitive elements in the lattice $N$.

	\begin{definition}[Linear, balanced, and stable]\label{def: artin-fan-stability}
		Let $C\to \mathsf A_X$ be a prestable map. A vertex of the dual graph $\Gamma$ is {\it linear bivalent} if it has genus $0$, is bivalent, and the slopes of the two flags based at the vertex are opposite, i.e. the edge lies on a line. A linear bivalent vertex is {\it stable} if the cones assigned to the vertex and its two flags do not coincide. 
		
		A vertex of $\Gamma$ is {\it contracted} if the slopes of all flags based at this vertex are $0$. A contracted vertex is {\it stable} if it either has positive genus, or it has genus $0$ and at least $3$ incident flags.  
		
		A prestable map $C\to \mathsf A_X$ is {\it balanced} if at every vertex of $\Gamma$, the weighted sum of the outgoing edge directions at that vertex is $0$. 
		
		A prestable map $C\to \mathsf A_X$ is {\it stable} if it is balanced and all linear bivalent vertices and contracted vertices are stable. 
	\end{definition}
	
	The balancing condition and the stability condition are both open in moduli. Both conditions can be checked at the level of dual graphs, so it suffices to check that the balancing and stability conditions are preserved when specializing the combinatorial type. These can both be checked explicitly. The reader can find the verifications, although unfortunately with slightly different terminology, in~\cite[Section~2.2]{R16}.
	
	\begin{definition}\label{def: stability-artin-fans}
		The moduli stack of {\it stable maps} to the Artin fan $\mathsf A_X$ is the open substack of $\mathfrak M_\Uplambda(\mathsf A_X)$ parameterizing maps that are stable in the sense above. It is denoted $\mathsf M_\Uplambda(\mathsf A_X)$.
	\end{definition}
	
	In short, the space $\mathsf M_\Uplambda(\mathsf A_X)$ are those maps to the Artin fan whose combinatorial type {\it could} come from a logarithmic {\it stable} map to a toric variety. The balancing condition on $C\to\mathsf A_X$ is a necessary condition for the existence of a lift to $X$, though it is typically far from sufficient. 
	
	\begin{warning}\label{warning: stability}
		The space of stable maps to the Artin fan is of finite type, but it fails to be universally closed and is typically an Artin stack. A clearer picture of its properties can be deduced from its description as a subdivision of the functor of maps to $\mathbb G^r_{\trop}$, which is developed in later sections. However, we do not use its geometric properties in a serious way here, so we leave the details to an interested reader. 
	\end{warning}
	
	By the definition of stability, we have a natural morphism
	\[
	\mathsf M_\Uplambda(X)\to\mathsf M_\Uplambda(\mathsf A_X)\subset \mathfrak M_\Uplambda(\mathsf A_X). 
	\]
	The latter two are logarithmically smooth of dimension $3g-3+n$; the space on the left is typically singular. The morphism is strict and equipped with a relative perfect obstruction theory~\cite{AW}. 
	
	\subsection{Logarithmic tori} We introduce the protagonist of our story.
	
	\begin{definition}
		The {\it logarithmic algebraic torus} $\mathbb{G}_{\mathsf{log}}$ is the functor 
		$$
		\textbf{LogSch}\to \textbf{AbGrp}
		$$ 
		whose value on a logarithmic scheme $S$ is
		\begin{equation*}
		\mathbb{G}_{\mathsf{log}}(S)=H^0(S,M^{\mathsf{gp}}_S),
		\end{equation*}
		The {\it tropical torus} $\mathbb G_{\mathsf{trop}}$ is the functor defined by
		\begin{equation*}
		\mathbb{G}_{\mathsf{trop}}(S)=H^0(S,\overline M^{\mathsf{gp}}_S).
		\end{equation*}
	\end{definition}
	
	A logarithmic scheme has an associated generalized cone complex or cone stack, and these are useful when considering maps to $\mathbb G_{\trop}$ and $\mathbb G_{\mathsf{log}}$. The generalized cone complex is a colimit of a diagram of cones with face maps between them, see~\cite[Appendix~B]{GS13} or~\cite{ACMUW} for the association, and~\cite{CCUW} for a detailed discussion of cone stacks. The cone stack is similar but retains some higher categorical data about the diagram. Whenever we use this, either the distinction does not arise or either can be used. 
	
	In more detail, a section of $\overline M^{\mathsf{gp}}_S$ is equivalent to combinatorial data. For families of logarithmic curves $C/S$, we record the relevant notions in the next two remarks. 
	
	\begin{remark}[Moduli of tropical curves]\label{rem: tropical-curves}
		In a few places in this section, we will need a part of the theory of the tropicalization for the moduli stack of curves~\cite{CCUW}. The authors construct an object $\cM_{g,n}^\trop$, a certain stack on the category of cone complexes. Let $\sigma$ be a strictly convex cone with dual monoid $S_\sigma$. A tropical curve over $\sigma$, denoted $C_{\trop}$, is a stable dual graph $\Gamma$ of type $(g,n)$ together with a decoration of each edge by a nonzero element of $S_\sigma$. This decoration is the {\it length} of the edge. Dualizing, a tropical curve over $\sigma$ produces a map of cone complexes $C_\trop\to \sigma$, such that the fibers of this map over the relative interior are metric graphs enhancing $\Gamma$. We are mildly abusing notation here, using $C_{\trop}$ both for the decorated graph and the cone complex. The first main result concerns the fibered category $\cM_{g,n}^{\trop}$ over cone complexes whose value on a cone $\sigma$ is the groupoid of tropical curves over $\sigma$. It is proved in~\cite{CCUW} that $\cM_{g,n}^{\trop}$ is representable by a ``cone stack''. The second main result is that there is a canonically associated Artin fan, deoted $a^\star\cM_{g,n}^{\trop}$ and a strict, smooth, and surjective map:
		\[
		\Mbar_{g,n}\to a^\star\cM_{g,n}^{\trop}.
		\]
		It will arise for us in the following fashion. Fix $S$ an {\it atomic} logarithmic scheme~\cite[Section~2]{AW}. It admits a strict map $S\to \mathsf A_\sigma$ to an Artin fan associated to a cone $\sigma$. Then each family of $(g,n)$ stable curves $C/S$ produces a moduli map $\sigma\to \cM_{g,n}^{\trop}$ or equivalently a tropical curve $C_{\trop}$ over $\sigma$.
	\end{remark}
	
	\begin{remark}[Piecewise linear functions]\label{rem: pl-functions} Let us use the tropical curve to describe sections of $\overline M_C^{\sf gp}$. Suppose $C$ is a logarithmic curve over a closed logarithmic point $S$ and let $\Gamma$ be its stable dual graph, with genus and marking decorations. Building on the previous remark, the dual graph $\Gamma$ can be enhanced with an element of $\overline M_S$ at each edge; this monoid takes the place of $S_\sigma$ in the previous remark. An edge $E$ of $\Gamma$ is dual to a node $q_E$ of $C$. The local equation at $q_E$ is $xy = t$, where $t$ is an element of the monoid sheaf $M_S$ that maps to $0$ in $\mathcal O_S$. The image of $t$ in the characteristic monoid $\overline M_S$ is denoted $\overline t$. The decorated graph is denoted $C_{\trop}$, consistent with the discussion above. The element $\overline t$ is the {\it length} of the edge $E$. As above, the decorated graph $C_{\sf trop}$ is equivalent to a cone complex over the dual cone of $\overline M_S$.
		
		Suppose that $\overline\alpha \in \overline M_C^{\sf gp}$.  If $V$ is a vertex of $C_{\sf trop}$ there is a component of $C$ dual to $V$, and on the interior of this component, $\overline M_C^{\sf gp}$ is constant with value $\overline M_S^{\sf gp}$.  Write $\overline\alpha(v)$ for the value of $\overline\alpha$ on the interior of this component.  If $E$ is an edge between $V$ and $W$, near $E$ we can write:
		$$\overline\alpha=\overline\alpha(V) + m \overline x,$$ 
		where $\overline x$ is the image of $x \in M_{C,E}$ in $\overline M_{C,E}$ and $m \in \mathbb Z$.  Restricting to $W$, we find $\overline\alpha(W) = \overline\alpha(V) + m \overline t$.  We may view $\overline\alpha$ as a piecewise linear function on $C_{\sf trop}$ with slope $m$ on the edge $E$, directed from $V$ to $W$. Globally, we obtain a piecewise linear function on the cone complex $C_{\trop}$.
	\end{remark}
	
	The functors are not representable by schemes, or even stacks, with logarithmic structure. We say that they are ``schematically non-representable''. The following is a replacement:
	
	\begin{proposition}
		The scheme $\mathbb{P}^1$ equipped with its toric logarithmic structure is a subdivision of $\mathbb{G}_{\mathsf{log}}$.
	\end{proposition}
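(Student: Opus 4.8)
The plan is to write down the morphism $\mathbb{P}^1\to\mathbb{G}_{\mathsf{log}}$, reduce the assertion to a base change statement over arbitrary logarithmic schemes, and then identify each base change with an explicit toric subdivision cut out by a piecewise linear function. The morphism is the one classifying the toric coordinate $z$ on $\mathbb{P}^1$, equivalently the generalized Cartier divisor $[0]-[\infty]$, viewed as a global section of $M^{\mathsf{gp}}_{\mathbb{P}^1}$: on the chart $\mathbb{A}^1=\operatorname{Spec}\mathbb{C}[z]$ with its toric logarithmic structure it is the tautological lift of $z\in M_{\mathbb{P}^1}$, on the chart $\operatorname{Spec}\mathbb{C}[z^{-1}]$ it is the inverse in $M^{\mathsf{gp}}$ of the tautological lift of $z^{-1}$, and on $\mathbb{G}_m$ it is the unit $z$; these agree on overlaps. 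Since $\mathbb{G}_{\mathsf{log}}$ is schematically non-representable, to call this map a subdivision is to assert that for every logarithmic scheme $S$ and every morphism $S\to\mathbb{G}_{\mathsf{log}}$, the fibre product $\mathbb{P}^1\times_{\mathbb{G}_{\mathsf{log}}}S$ is representable by a logarithmic scheme, or Deligne--Mumford stack, and the projection to $S$ is a subdivision in the sense already defined. So it suffices to analyse this fibre product.

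I would carry out the analysis \'etale-locally on $S$, so that $S$ is atomic with a global chart $P\to M_S$. A morphism $S\to\mathbb{G}_{\mathsf{log}}$ is a section $\alpha\in H^0(S,M^{\mathsf{gp}}_S)$, whose image $\bar\alpha\in H^0(S,\overline{M}^{\mathsf{gp}}_S)=P^{\mathsf{gp}}$ is a linear function on the cone $\sigma=\Sigma(S)$ --- an element of $\mathbb{G}_{\mathsf{trop}}(S)$ --- which I write through the chart as $\bar\alpha=p-q$ with $p,q\in P$. Covering $\mathbb{P}^1$ by its two affine toric charts, the crucial observation is that $\operatorname{Spec}\mathbb{C}[z]\times_{\mathbb{G}_{\mathsf{log}}}S$ represents the functor of morphisms $\varphi\colon T\to S$ along which $\varphi^\star\alpha$ becomes effective; because $M_T=M^{\mathsf{gp}}_T\times_{\overline{M}^{\mathsf{gp}}_T}\overline{M}_T$, this condition is visible on characteristic monoids, namely $\varphi^\star\bar\alpha\in\overline{M}_T$. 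By the dictionary between sections of $\overline{M}^{\mathsf{gp}}$ and piecewise linear functions (Remark~\ref{rem: pl-functions}), this universal effective locus is the open part $\{\bar\alpha\ge 0\}$ of the subdivision $S'\to S$ of $\Sigma(S)$ along the wall $\{\bar\alpha=0\}$; symmetrically $\operatorname{Spec}\mathbb{C}[z^{-1}]\times_{\mathbb{G}_{\mathsf{log}}}S$ is the complementary open part $\{\bar\alpha\le 0\}$, and $\mathbb{G}_m\times_{\mathbb{G}_{\mathsf{log}}}S=\{\bar\alpha=0\}$ is their overlap. Since the formation of fibre products commutes with this gluing, $\mathbb{P}^1\times_{\mathbb{G}_{\mathsf{log}}}S=S'$, which is precisely the fibre product of cone complexes $\Sigma(S)\times_{\mathbb{R}}\Sigma(\mathbb{P}^1)$ for the complete rank one fan $\Sigma(\mathbb{P}^1)=\{\mathbb{R}_{\le 0},0,\mathbb{R}_{\ge 0}\}$.

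It then remains to invoke the standard correspondence between subdivisions of cone complexes and logarithmic modifications (see~\cite{AW}): the morphism $S'\to S$ is proper, the two affine charts covering $\mathbb{P}^1$ over $\mathbb{G}_{\mathsf{log}}$ precisely because $\mathbb{R}_{\le 0}\cup\mathbb{R}_{\ge 0}=\mathbb{R}$, i.e. because the fan of $\mathbb{P}^1$ is complete; it is logarithmically \'etale, refining $\Sigma(S)$ with the same support; and it is birational, being an isomorphism over the dense open where $\bar\alpha=0$, in particular over the locus on which $S$ carries the trivial logarithmic structure and $\mathbb{G}_{\mathsf{log}}$ restricts to $\mathbb{G}_m$. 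Hence $\mathbb{P}^1\times_{\mathbb{G}_{\mathsf{log}}}S\to S$ is a subdivision for every $S$, which is the assertion.

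The step demanding genuine care is the identification in the middle paragraph: one must verify that the universal effective locus functor is actually representable by a logarithmic modification --- equivalently, that $\mathbb{P}^1\to\mathbb{G}_{\mathsf{log}}$ is representable --- and this is exactly where one uses the chart $P\to M_S$, the presentation $\bar\alpha=p-q$, and the explicit toric model of the subdivision of $\sigma$ along $\{\bar\alpha=0\}$ (a star subdivision in the toric picture). Once representability is secured, the three properties of a subdivision are formal consequences of the refinement of cone complexes.
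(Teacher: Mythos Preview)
Your argument is correct and is essentially the standard proof; it is in fact considerably more detailed than the paper's own treatment, which consists entirely of the citation ``See~\cite[Section~1]{RW19}.'' The approach you take---identifying $\mathbb{A}^1\times_{\mathbb{G}_{\mathsf{log}}}S$ with the universal locus where $\alpha$ becomes effective, then recognising this as one half of the subdivision of $\Sigma(S)$ along the hyperplane $\{\bar\alpha=0\}$---is exactly the argument of the cited reference, so there is nothing to contrast.

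One small remark: in your final paragraph you verify birationality by noting that $S'\to S$ is an isomorphism over the locus where the logarithmic structure is trivial. This is fine when $S$ is logarithmically smooth, but for a general test scheme $S$ that locus may be empty. This is not an actual gap, since your cone-complex description already exhibits $S'\to S$ as the strict pullback of a subdivision of the Artin cone $\mathsf{A}_\sigma$, which is exactly the paper's definition of subdivision over a non-logarithmically-smooth base; but it would be cleaner to phrase the conclusion that way rather than to invoke birationality directly.
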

	
	\begin{proof}
		See~\cite[Section~1]{RW19}. 
	\end{proof}
	
	Any complete toric variety $X$ completing $\mathbb G_m\otimes N$ arises uniquely as a subdivision $X\to \mathbb G_{\mathsf{log}}\otimes N$. Analogously, there is a subdivision $
	\mathsf A_X\to\mathbb G_{\mathsf{trop}}^r$.
	
	\noindent
	{\bf Notation.} {\it The symbol $N$ will denote the cocharacter lattice of the dense torus of $X$. However, to compactify notation, we fix an identification of $N$ with $\mathbb Z^r$ and denote the logarithmic torus $N\otimes\mathbb G_{\mathsf{log}}$ by $\mathbb G_{\mathsf{log}}^r$.} 
	
	\begin{remark}
		One can visualize $\mathbb{G}_{\mathsf{log}}^r$ as the ``toric variety'' defined by a fan with exactly one non-strictly convex cone $\sigma=\mathbb{R}^r$. It is not representable by a scheme (or algebraic stack) with logarithmic structure. Generally, if ``strict convexity'' in the definition of a cone from toric geometry is dropped, we obtain non-representable functors ``in between'' toric varieties and $\mathbb G_{\mathsf{log}}^r$. 
	\end{remark}
	
	%
	%

	\subsection{Maps to logarithmic tori}\label{sec: maps-to-tori}
	We define moduli spaces of logarithmic maps with target $\mathbb{G}_{\mathsf{log}}^r$. Reflecting the schematic non-representability of the target, the moduli space is also non-representable. 
	
	We have noted that a complete toric variety gives rise to a canonical subdivision $X\to \mathbb{G}_{\mathsf{log}}^r$. 
	In Proposition~\ref{prop:rigidmod}, we show that this induces subdivision of the corresponding moduli spaces; the analogous statement for schematic targets is proved in~\cite{AW}.  
	
	\subsubsection{Setup and discrete data} Fix discrete data $\Uplambda=(g, \underline{A}=(A_1,\ldots, A_n))$ where $\underline{A}$ is a tuple of integer vectors each of length $r$. In other words, the contact data $r\times n$ matrix. Each row of this matrix records a {\it contact order} for the moduli problem of logarithmic maps. 
	
	In geometric contexts, contact orders keep track of orders of vanishing of boundary divisors of the target along marked points on the curve. Since we work in this non-representable context, we explain how to think about the contact order. A morphism 
	$$
	\overline s = \Spec(\mathbb N\to \mathbb C)\to \mathbb G^r_{\mathsf{log}}
	$$ 
	is the data of $r$ sections of $M^{\mathsf{gp}}_{\overline s}$. Projecting along the natural map $M^{\mathsf{gp}}_{\overline s}\to \overline M^{\mathsf{gp}}_{\overline s}$, we obtain $r$ integers. Given a morphism from a logarithmic scheme $S$, one can collect this tuple of integers for all geometric points. These data are locally constant.
	
	Given a logarithmic morphism to $\mathbb G_{\mathsf{log}}^r$ from a logarithmic curve $C$ over $S$, the marked points are a family of $\Spec(\mathbb N\to \mathbb C)$-points over $S$. We can analogously fix the contact order at each marked point. We can similarly make sense of the contact order for a flag consisting of a component and a node on it. The contact order of $C\to \mathbb G^r_{\mathsf{log}}$ only depends on the map $C\to \mathbb G^r_{\mathsf{trop}}$.
	
	\subsubsection{Geometry of the mapping space} We describe moduli of maps to logarithmic and tropical tori.
	
	\begin{definition}\label{def: glog-def}
		The {\it moduli space of prestable logarithmic maps to $\mathbb{G}_{\mathsf{log}}^r$}, denoted $\mathfrak{M}_{\Uplambda}(\mathbb{G}_{\mathsf{log}}^r)$, is the fibered category over logarithmic schemes whose fiber over a logarithmic scheme $S$ is:
		\begin{equation*}
		\left( \begin{tikzcd}
		C \arrow[d,swap,"\pi"] \\
		S
		\end{tikzcd}, \ (s_1, \ldots, s_r) \right),
		\end{equation*}
		where $\pi$ is a log curve $C \to S$ of type $(g,n)$ and $s_1, \ldots, s_r$ in $H^0(C, M^{\mathsf{gp}}_C)$ with contact profile $\underline{A}$. 
		
		A morphism between two such pairs is a cartesian diagram:
		\begin{equation*}
		\begin{tikzcd}
		C_1 \arrow[r] \arrow[d] \arrow[dr, phantom, "\square"] & C_2 \arrow[d]\\
		S_1 \arrow[r] & S_2,
		\end{tikzcd}
		\end{equation*}
		such that the two $r$-tuples of sections are related by pullback. We have a natural map:
		\begin{equation*}
		\mathfrak{M}_{\Uplambda}(\mathbb{G}_{\mathsf{log}}^r)\to\mathfrak{M}^{\mathsf{log}}_{g,n},
		\end{equation*}
		where $\mathfrak{M}^{\mathsf{log}}_{g,n}$ is the moduli space of genus $g$ log curves with $n$ markings.  The moduli space of {\it stable logarithmic maps to $\mathbb{G}_{\mathsf{log}}^r$}, denoted  $\mathsf{M}_{\Uplambda}(\mathbb{G}_{\mathsf{log}}^r)$, is the subcategory of $\mathfrak{M}_{\Uplambda}(\mathbb{G}_{\mathsf{log}}^r)$ where the source curves are stable.
		
		In parallel, we define logarithmic maps to $\mathbb G_\trop^r$ as $r$ sections of $\overline M_C^{\mathsf{gp}}$. A map is {\it balanced} at a vertex $V$ if the sum of the contact orders of flags leaving $V$ vanishes, see Remark~\ref{rem: pl-functions}. A map $C\to \mathbb G_\trop^r$ is called {\it stable} if the underlying curve is stable and the map is balanced at all vertices. The moduli of stable maps to $\mathbb G_{\mathsf{trop}}^r$ is denoted $\mathsf M_\Uplambda(\mathbb{G}_{\mathsf{trop}}^r)$. 
	\end{definition}
	
	We come to the relationship between the spaces of maps to logarithmic tori and to tropical tori. If $C\to \mathbb G_{\mathsf{trop}}$ factors through $\mathbb G_{\mathsf{log}}$, then it is automatically balanced, so there is a morphism
	\[
	\mathsf M_\Uplambda(\mathbb{G}_{\mathsf{log}}^r)\to \mathsf M_\Uplambda(\mathbb{G}_{\mathsf{trop}}^r).
	\]
	Although neither $\mathsf M_\Uplambda(\mathbb{G}_{\mathsf{log}}^r)$ nor $\mathsf M_\Uplambda(\mathbb{G}_{\mathsf{trop}}^r)$ is schematically representable, we have the following:
	
	\begin{proposition}\label{prop: strictness}
		The morphism
		\[
		\mathsf M_\Uplambda(\mathbb{G}_{\mathsf{log}}^r)\to \mathsf M_\Uplambda(\mathbb{G}_{\mathsf{trop}}^r)
		\]
		is representable and strict. 
	\end{proposition}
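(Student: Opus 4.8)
The plan is to identify the fibers of the morphism directly. Unwinding Definition~\ref{def: glog-def}, fix a logarithmic scheme $T$ together with a morphism $T\to \mathsf M_\Uplambda(\mathbb{G}_{\mathsf{trop}}^r)$, i.e. a logarithmic curve $\pi\colon C\to T$ of type $(g,n)$ and an $r$-tuple of balanced sections $\bar s_1,\ldots,\bar s_r\in H^0(C,\overline M^{\mathsf{gp}}_C)$ with contact profile $\underline A$ (see Remark~\ref{rem: pl-functions}). Since the curve and the sections $\bar s_i$ are pulled back from $T$, the fiber product $\mathsf M_\Uplambda(\mathbb{G}_{\mathsf{log}}^r)\times_{\mathsf M_\Uplambda(\mathbb{G}_{\mathsf{trop}}^r)} T$ represents the functor sending a logarithmic scheme $T'\to T$ to the set of $r$-tuples $(s_1,\ldots,s_r)$ with $s_i\in H^0(C_{T'},M^{\mathsf{gp}}_{C_{T'}})$ lifting $\bar s_i|_{C_{T'}}$ along $M^{\mathsf{gp}}_{C_{T'}}\to\overline M^{\mathsf{gp}}_{C_{T'}}$; the contact profile of any such lift is automatically $\underline A$ and stability of the source is inherited. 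It therefore suffices to show that this functor of lifts is representable by a scheme that is strict over $T$.

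The crux is the following observation. Since $M^{\mathsf{gp}}_C\to\overline M^{\mathsf{gp}}_C$ is an epimorphism of \'etale sheaves with kernel $\mathcal O_C^\times$, each $\bar s_i$ admits lifts \'etale locally on $C$, and any two local lifts differ by a section of $\mathcal O_C^\times$. Hence the sheaf $\mathcal L_i$ on $C$ of local lifts of $\bar s_i$ is a torsor under $\mathcal O_C^\times$, and a lift of $\bar s_i$ over $C_{T'}$ is precisely a global section of $\mathcal L_i|_{C_{T'}}$. Crucially, this description of the functor of lifts --- including the conditions that the $s_i$ glue and project to the fixed $\bar s_i$ --- refers only to $\mathcal O_C^\times$ and not to the logarithmic structure of $T'$. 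Consequently the functor factors through the category of schemes over the underlying scheme of $T$, and the fiber product carries the logarithmic structure pulled back from $T$, which is exactly strictness of the morphism.

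Representability is now a classical matter. Let $L_i$ be the line bundle on $C$ associated to the $\mathcal O_C^\times$-torsor $\mathcal L_i$, so that $\mathcal L_i$ is the sheaf of nowhere-vanishing sections of $L_i$. A global lift of $\bar s_i$ over $C_{T'}$ is then the same datum as a trivialization of $L_i|_{C_{T'}}$, i.e. a nowhere-vanishing global section. The functor $T'\mapsto H^0(C_{T'},L_i|_{C_{T'}})$ of global sections of $L_i$ along the proper flat family $\pi$ is representable by a linear scheme over the underlying scheme of $T$, by the usual cohomology-and-base-change construction, and the nowhere-vanishing locus is an open subscheme of it. Taking the fiber product of these $r$ schemes over $T$ presents $\mathsf M_\Uplambda(\mathbb{G}_{\mathsf{log}}^r)\times_{\mathsf M_\Uplambda(\mathbb{G}_{\mathsf{trop}}^r)} T$ as a scheme, strict and of finite type over $T$. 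Note that over a geometric point this fiber is empty unless all the $L_i$ are trivial, in which case it is a torsor under $\mathbb G_m^r$; this matches the heuristic that $\mathsf M_\Uplambda(\mathbb{G}_{\mathsf{log}}^r)$ is, on the interior, a torus bundle over $\mathsf M_\Uplambda(\mathbb{G}_{\mathsf{trop}}^r)$.

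The only genuinely delicate step is the second one: verifying that lifting the tropical datum never forces an enlargement of the base logarithmic structure, so that the relative moduli of lifts is logarithmically trivial over $\mathsf M_\Uplambda(\mathbb{G}_{\mathsf{trop}}^r)$. Once the epimorphism $M^{\mathsf{gp}}_C\to\overline M^{\mathsf{gp}}_C$ and its kernel are used to present lifts via the $\mathcal O_C^\times$-torsors $\mathcal L_i$, this is immediate, and it is precisely this that yields strictness and reduces representability to the standard fact that trivializations of a line bundle on a proper flat family form a representable functor.
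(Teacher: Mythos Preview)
Your proof is correct and follows essentially the same approach as the paper's. Both arguments base change along a test scheme, identify the fiber product with the space of lifts of the $\bar s_i$ along $M_C^{\mathsf{gp}}\to\overline M_C^{\mathsf{gp}}$, observe that such lifts are sections of a $\mathbb G_m$-torsor (equivalently, trivializations of an associated line bundle) on $C$, and conclude strictness from the fact that this lifting problem is insensitive to the logarithmic structure on the base; your write-up is simply more explicit about the representability of the functor of trivializations.
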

	
	\begin{proof}
		Let $S$ be a logarithmic scheme and fix $S\to \mathsf M_\Uplambda(\mathbb{G}_{\mathsf{trop}}^r)$. We show that the base change map
		\[
		S':=S\times_{\mathsf M_\Uplambda(\mathbb{G}_{\mathsf{trop}}^r)} \mathsf M_\Uplambda(\mathbb{G}_{\mathsf{log}}^r)\to S
		\]
		is strict with schematic domain. Let us describe $S'$. The map $S\to \mathsf M_\Uplambda(\mathbb{G}_{\mathsf{trop}}^r)$ is equivalent to the data of a family of logarithmic curves $C/S$ and a map $\alpha: C\to \mathbb G_{\mathsf{trop}}$. The torsor $\mathbb G_{\mathsf{log}}\to \mathbb G_{\trop}$ pulls back to a $\mathbb G_m$-torsor on $C$, whose total space is denoted $\mathbb G(\alpha)\to C$. Liftings of
		\[
		\begin{tikzcd}
		&{{\mathbb G}_{\mathsf{log}}}\arrow{d}\\
		C\arrow{r} \arrow[dashed]{ur}&{\mathbb G}_{\mathsf{log}}
		\end{tikzcd}
		\]
		are equivalent to the data of sections of $\mathbb G(\alpha)\to C$. The space $S'$ is therefore precisely the moduli space of sections of $\mathbb G(\alpha)\to C$. Since the map $\mathbb G_{\mathsf{log}}\to \mathbb G_{\trop}$ is strict, the map $\mathbb G(\alpha)\to C$ is too. The space $S'$ of sections of the torsor is therefore representable and strict over $S$.  
	\end{proof}
	
	%
	
	\begin{warning}
		Although we call it a stability condition, the space $\mathsf M_\Uplambda(\mathbb{G}_{\mathsf{trop}}^r)$ should not be thought of as being proper. Since the space is schematically non-representable though, only maps to it from {\it logarithmic} schemes are defined, rather than maps from ordinary schemes. Therefore it is slightly awkward to precisely define properness. One manifestation of the non-properness is that $\mathsf M_\Uplambda(\mathbb{G}_{\mathsf{trop}}^r)$ fails the logarithmic valuative criterion for properness~\cite[Theorem~3.5.2]{MW17}. In this criterion, we start with the spectrum $S$ of a discrete valuation ring, and let $\eta$ be its generic point. We then equip $\eta$ with a logarithmic structure consider maps from it to $\mathsf M_\Uplambda(\mathbb{G}_{\mathsf{trop}}^r)$ and ask when such maps can be extended. In this setup, there exist maps from $\eta$ that do not extend to maps $S\to \mathsf M_\Uplambda(\mathbb{G}_{\mathsf{trop}}^r)$ for {\it any} logarithmic structure on $S$ that restricts to the chosen one on $\eta$. We leave the details of this to the reader, as the it is orthogonal to the main results of the paper. 
		
		Another manifestation of the non-properness will come later in the paper, when we argue that after a subdivision, the space $\mathsf M_\Uplambda(\mathbb{G}_{\mathsf{trop}}^r)$ is representable by a non-proper Deligne--Mumford stack. 

	\end{warning}
	
	\begin{remark}
		Consider the moduli of logarithmic maps to $\mathbb{G}_{\mathsf{log}}^r$ from prestable curves of type $(g,n)$, without a specified contact profile, denoted $\mathfrak{M}_{g,n}(\mathbb{G}_{\mathsf{log}}^{r})$. We can view this as a sheaf of abelian groups on the \'etale site of $\mathfrak{M}_{g,n}$. We have a sheaf homomorphism
		\begin{equation*}
		\mathfrak{M}_{g,n}(\mathbb{G}^{r}_{\mathsf{log}})\to (\mathbb{Z}^{n}_{0})^r,
		\end{equation*}
		which sends a log map $f$ to the contact profile of $f$. 
	\end{remark}
	
	
	The following proposition can essentially be found in~\cite[Section~2]{RSW17B}. We take the opportunity to present a detailed discussion.
	
	\begin{proposition}
		\label{prop:rigidmod}
		Let $X$ be a smooth projective toric variety with torus $\mathbb G_m^r$, equipped with its toric logarithmic structure. The subdivision $X \to \mathbb{G}_{\mathsf{log}}^r$ induces a subdivision $$\mathsf{M}_{\Uplambda}(X) \to \mathsf{M}_{\Uplambda}(\mathbb{G}_{\mathsf{log}}^r).$$ 
	\end{proposition}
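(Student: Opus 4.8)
The plan is to deduce the statement from the stability of subdivisions under strict base change, after reducing it to a combinatorial assertion about the ``tropical'' mapping spaces; this is the strategy implicit in the schematic case of~\cite[Section~2]{RSW17B} and in the birational invariance results of~\cite{AW}. Concretely, I would first record the square
\[
\begin{tikzcd}
\mathsf M_\Uplambda(X)\arrow{r}\arrow{d}& \mathsf M_\Uplambda(\mathbb G_{\mathsf{log}}^r)\arrow{d}\\
\mathsf M_\Uplambda(\mathsf A_X)\arrow{r}& \mathsf M_\Uplambda(\mathbb G_{\mathsf{trop}}^r),
\end{tikzcd}
\]
in which the left vertical map is the strict morphism of Section~\ref{sec: maps-to-tvs} and the right vertical map is the strict, representable morphism of Proposition~\ref{prop: strictness}. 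The two claims to establish are: (i) this square is cartesian in fine and saturated logarithmic stacks, and (ii) the bottom arrow $\mathsf M_\Uplambda(\mathsf A_X)\to\mathsf M_\Uplambda(\mathbb G_{\mathsf{trop}}^r)$ is a subdivision. Granting both, the proposition follows, since (i) exhibits $\mathsf M_\Uplambda(X)$ as the strict base change of $\mathsf M_\Uplambda(\mathsf A_X)$ along the representable morphism $\mathsf M_\Uplambda(\mathbb G_{\mathsf{log}}^r)\to\mathsf M_\Uplambda(\mathbb G_{\mathsf{trop}}^r)$, and subdivisions pull back to subdivisions under such base change.

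For claim (i), I would argue locally over an atomic base $S$ equipped with a map to $\mathsf M_\Uplambda(\mathbb G_{\mathsf{trop}}^r)$, classifying a logarithmic curve $C/S$ with a balanced tropical map $\alpha\colon C\to\mathbb G_{\mathsf{trop}}^r$. The fibre of the right vertical arrow over this point is the space of sections of the strict $\mathbb G_m^r$-torsor $\mathbb G(\alpha)\to C$ appearing in the proof of Proposition~\ref{prop: strictness}, while the fibre of the left vertical arrow over a compatible lift $C\to\mathsf A_X$ of $\alpha$ is the space of sections of the strict $\mathbb G_m^r$-torsor $X\times_{\mathsf A_X}C\to C$. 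Since $X\to\mathbb G_{\mathsf{log}}^r$ and $\mathsf A_X\to\mathbb G_{\mathsf{trop}}^r$ are compatible subdivisions and $X\to\mathsf A_X$, $\mathbb G_{\mathsf{log}}^r\to\mathbb G_{\mathsf{trop}}^r$ are each the ``same'' strict $\mathbb G_m^r$-torsor, one has $X\cong\mathsf A_X\times_{\mathbb G_{\mathsf{trop}}^r}\mathbb G_{\mathsf{log}}^r$, hence $X\times_{\mathsf A_X}C\cong\mathbb G(\alpha)$, and the two fibres agree. It remains to match the stability conditions: a lift $C\to\mathsf A_X$ of $\alpha$ is stable in the sense of Definition~\ref{def: artin-fan-stability} exactly when the associated section of $\mathbb G(\alpha)$ gives a stable logarithmic map to $X$, which is the toric reformulation of stability recalled in Section~\ref{sec: maps-to-tvs}.

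Claim (ii) is the step I expect to be the main obstacle, and it is the point flagged in Warning~\ref{warning: stability}. One must show that the subdivision of cone stacks $\mathsf A_X\to\mathbb G_{\mathsf{trop}}^r$ --- where $\mathsf A_X$ corresponds to the fan $\Sigma_X$ subdividing the single non-strictly-convex cone $\mathbb R^r$ --- induces a subdivision on the spaces of balanced maps with discrete data $\Uplambda$. Working one combinatorial type $\theta$ at a time, a type for a map to $\mathbb G_{\mathsf{trop}}^r$ determines a cone $\sigma_\theta$ of edge lengths together with a map to $\mathbb R^r$ from the universal tropical curve over $\sigma_\theta$; pulling back the fan $\Sigma_X$ along this map, and simultaneously contracting the linear bivalent and unstable contracted vertices it produces as in the universal-curve analysis of~\cite{AW}, should yield a subdivision of $\sigma_\theta$ whose cones are precisely the types of stable maps to $\mathsf A_X$ lying over $\theta$. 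The delicate part is the reconciliation of stabilities: $\mathsf M_\Uplambda(\mathbb G_{\mathsf{trop}}^r)$ demands a Deligne--Mumford stable source, whereas $\mathsf M_\Uplambda(\mathsf A_X)$ uses the map-theoretic stability of Definition~\ref{def: artin-fan-stability}, so the bottom arrow is not an isomorphism on underlying categories but genuinely contracts components. I would need to verify that this contraction coincides with the stabilization in the Abramovich--Wise picture, and that the resulting morphism of generalised cone complexes is proper, logarithmically \'etale, and birational --- a subdivision --- rather than merely proper and birational. Here properness is the assertion that every balanced tropical map becomes $\Sigma_X$-compatible after a subdivision of the source, uniqueness of that subdivision gives logarithmic \'etaleness, and birationality holds because both spaces restrict to $M_{g,n}$ over the locus of smooth curves carrying the trivial tropical map.
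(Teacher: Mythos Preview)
Your decomposition into (i) the square being cartesian and (ii) the bottom arrow being a subdivision is exactly the paper's strategy, and your instinct that (ii) is the substantive step is correct. Two points where the paper's execution differs from yours are worth noting.

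For (i), your torsor argument conflates the source curves. A point of $\mathsf M_\Uplambda(\mathsf A_X)$ carries a curve $C$ that is in general only prestable, while the matching point of $\mathsf M_\Uplambda(\mathbb G_{\mathsf{trop}}^r)$ carries its stabilization $\overline C$; the bottom arrow contracts the linear bivalent components, as you yourself note in your discussion of (ii). So the two torsors you compare live over different curves, and ``$X\times_{\mathsf A_X}C\cong\mathbb G(\alpha)$'' is not literally correct as stated, since $\mathbb G(\alpha)$ was constructed over $\overline C$. The paper sidesteps this by arguing directly from the cartesian square $X=\mathsf A_X\times_{\mathbb G_{\mathsf{trop}}^r}\mathbb G_{\mathsf{log}}^r$ on \emph{targets}: given data $(C\to\mathsf A_X,\ \overline C\to\mathbb G_{\mathsf{log}}^r)$ in the fibre product, one composes $C\to\overline C\to\mathbb G_{\mathsf{log}}^r$ and uses the target square to produce $C\to X$; stability of this map is then checked from the linear-bivalent condition, exactly as you indicate. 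Your argument can be repaired along these lines, but as written it has a gap.

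For (ii), the paper does not work type-by-type on cones. It first reduces to the case of a logarithmically smooth atomic test scheme $S\to\mathsf M_\Uplambda(\mathbb G_{\mathsf{trop}}^r)$ by constructing a factorization $S\to U_\sigma\to\mathsf M_\Uplambda(\mathbb G_{\mathsf{trop}}^r)$ with the first arrow strict and the second logarithmically \'etale, using the cone-stack description of $\cM_{g,n}^{\mathsf{trop}}$ from~\cite{CCUW}. Over such $S$ it then verifies birationality (both sides agree where the log structure is trivial), logarithmic \'etaleness (the infinitesimal lifting criterion is automatic since the relevant data depend only on the characteristic sheaf), and properness (by transporting the valuative-criterion argument of~\cite[Proposition~4.7.2]{ACMW}). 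This is more structural than your sketch and avoids the explicit cone-by-cone reconciliation of stabilities you were anticipating as the main obstacle.
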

	
	We prove this result via a related result at the Artin fan level. To state it, consider a family
	\begin{equation*}
	\begin{tikzcd}
	C \arrow[r, "f"] \arrow[d, swap,"\pi"] 
	& {\mathsf A_{X}} \\
	S,
	\end{tikzcd}
	\end{equation*}
	obtained via a moduli map $S\to \mathsf{M}_\Uplambda(\mathsf A_X)$.
	We stabilize the source curve to obtain a curve $\overline C\to S$, for example by using the moduli map $\fM_{g,n}\to \Mbar_{g,n}$. We observe that the only unstable components that {\it could} be contracted by $C\to \overline C$ are $2$-pointed $\mathbb P^1$-components. 
	
	By~\cite[Proposition~2.10]{ACGS15}, the map $C\to \mathsf A_X$ has a concrete description. Precisely, recall that $C$ admits a {\it generalized cone complex}, given by a colimit of cones as in~\cite[Appendix~B]{GS13}\footnote{See also~\cite{ACP} for details on the combinatorics and~\cite{ACMUW} for a discussion in the appropriate conceptual context.}. It is denoted $C_{\trop}$. The cited proposition tells us that the map $C\to \mathsf A_X$ is equivalent to the data of a piecewise linear map
	\[
	C_{\trop}\to \Sigma_X,
	\]
	where $\Sigma_X$ is the fan of $X$. Similarly, the curve $\overline C$ has an associated cone complex, and there is a map
	\[
	C_{\trop}\to\overline C_{\trop}.
	\]
	Now, the curves $C$ and $\overline C$ differ {\it only} by $2$-pointed $\mathbb P^1$-components. But by the stability in Definition~\ref{def: artin-fan-stability}, the composition
	\[
	C_{\trop}\to \Sigma_X\to \RR^r,
	\]
	factors through $\overline C_{\trop}$. Indeed, {\it linearity} is exactly the condition that the map factors through contracting the component, and {\it stability} is the condition that the map to $\Sigma_X$ {\it does not} factor through the contraction. 
	
	As previously noted, an $\mathbb R^r$-valued piecewise linear function on $\overline C_{\trop}$ is precisely $r$ sections of $H^0(\overline C,\overline M_{\overline C}^{\mathsf{gp}})$, we have produced a map:
	\[
	\mathsf{M}_{\Uplambda}(\mathsf A_X) \to \mathsf{M}_{\Uplambda}(\mathbb{G}_{\mathsf{trop}}^r).
	\]
	
	We now prove Proposition~\ref{prop:rigidmod}, via its ``tropical'' incarnation. 
	
	\begin{proposition}\label{prop:rigidmod-trop}
		Let $X$ be a smooth projective toric variety with torus $\mathbb G_m^r$, equipped with its toric logarithmic structure. The subdivision $\mathsf A_X\to  \mathbb{G}_{\mathsf{trop}}^r$ induces a subdivision
		$$
		\mathsf{M}_{\Uplambda}(\mathsf A_X) \to \mathsf{M}_{\Uplambda}(\mathbb{G}_{\mathsf{trop}}^r).
		$$ 
	\end{proposition}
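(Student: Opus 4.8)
The plan is to show that the morphism $\mathsf{M}_{\Uplambda}(\mathsf A_X) \to \mathsf{M}_{\Uplambda}(\mathbb{G}_{\mathsf{trop}}^r)$ constructed above is a subdivision --- that is, representable by Deligne--Mumford stacks, proper, birational, and logarithmically \'etale --- by base changing along an arbitrary logarithmic scheme mapping to the target and identifying the fiber product with an explicit subdivision of that scheme. This mirrors the Abramovich--Wise birational invariance theorem \cite{AW}, but since $\mathsf A_X$ is not schematically representable one must argue directly with the tropical combinatorics; this is feasible because all the data in sight is piecewise linear.

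First I would fix a logarithmic scheme $S$ with a map $S \to \mathsf{M}_{\Uplambda}(\mathbb{G}_{\mathsf{trop}}^r)$, that is, a stable logarithmic curve $\pi \colon \overline C \to S$ of type $(g,n)$ together with $r$ sections of $\overline M^{\mathsf{gp}}_{\overline C}$ of contact profile $\underline A$. Working \'etale locally I may take $S$ atomic, so by Remark~\ref{rem: tropical-curves} the data amounts to a tropical curve $\overline C_{\trop}$ over the cone $\sigma$ dual to $\overline M_S$, and by Remark~\ref{rem: pl-functions} the $r$ sections become a balanced piecewise linear map $h \colon \overline C_{\trop} \to \RR^r$. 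By the discussion preceding the proposition together with \cite[Proposition~2.10]{ACGS15}, a lift of $S \to \mathsf{M}_{\Uplambda}(\mathbb{G}_{\mathsf{trop}}^r)$ to $\mathsf{M}_{\Uplambda}(\mathsf A_X)$ is a semistable modification $C \to \overline C$ --- inserting $2$-pointed $\PP^1$-components --- together with a piecewise linear map $\widetilde h \colon C_{\trop} \to \Sigma_X$ whose composite with $\Sigma_X \to \RR^r$ is the pullback of $h$, and which is stable in the sense of Definition~\ref{def: artin-fan-stability}.

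The heart of the matter is the claim that, over a geometric logarithmic point, such a lift exists, is unique, and is given by the ``pull-back subdivision'' of $h$ along $\Sigma_X \to \RR^r$: for each edge $e$ of $\overline C_{\trop}$ one subdivides the segment $h(e)\subset\RR^r$ at the walls of $\Sigma_X$ it genuinely crosses, assigning to each sub-cell, and to each vertex old or new, the minimal cone of $\Sigma_X$ containing its image. The combinatorial content here is precisely that the stability clause of Definition~\ref{def: artin-fan-stability} --- the cone at a linear bivalent vertex must differ from the cones of its two flags --- forbids redundant subdivision points and so pins $\widetilde h$ down uniquely, while conversely the pull-back subdivision automatically satisfies it (balancing being inherited from $h$, since slopes are unchanged). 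I would carry this out edge by edge, using only that $\Sigma_X$ subdivides the single cone $\RR^r$ so that an integral segment meets finitely many walls. This transverse lift is already defined over $S$ only when the wall-crossing ratios along edges lie in $\overline M_S$; in general one first passes to the minimal logarithmic modification $S' \to S$ on which these become integral. This $S'$ is exactly the subdivision of $\sigma$ cut out in the ``moduli'' directions by pulling back $\Sigma_X$ along $h$; the ``curve'' directions of the same polyhedral subdivision encode which edges of $\overline C$ are subdivided, giving the modification $C \to \overline C$.

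It then follows that the fiber product $S \times_{\mathsf{M}_{\Uplambda}(\mathbb{G}_{\mathsf{trop}}^r)} \mathsf{M}_{\Uplambda}(\mathsf A_X)$ is canonically the log modification $S' \to S$ just described, carrying its tautological semistable modification of the universal curve; in particular it is Deligne--Mumford and is a subdivision of $S$. Since this holds \'etale locally on the target and the local subdivisions are compatible, being functorial in $h$, the morphism $\mathsf{M}_{\Uplambda}(\mathsf A_X) \to \mathsf{M}_{\Uplambda}(\mathbb{G}_{\mathsf{trop}}^r)$ is, \'etale locally on the target, the pullback of the subdivision induced by $\Sigma_X \to \RR^r$, hence a subdivision; Proposition~\ref{prop:rigidmod} then follows together with the strictness of $\mathsf{M}_{\Uplambda}(\mathbb{G}_{\mathsf{log}}^r) \to \mathsf{M}_{\Uplambda}(\mathbb{G}_{\mathsf{trop}}^r)$ from Proposition~\ref{prop: strictness}. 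The step I expect to be the main obstacle is the bookkeeping separating the source-curve semistable modification from the base logarithmic modification: one must check that fixing a combinatorial type of $\widetilde h$ carves out a single cone of the pull-back subdivision of $\sigma$ with no spurious components, that these glue over $\overline C_{\trop}$ into one global subdivision, and --- the crux --- that the stability condition is exactly strong enough to force uniqueness of the lift, which requires a careful analysis of how $h$ can meet the cones of $\Sigma_X$ in every codimension.
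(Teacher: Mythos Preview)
Your approach is correct in outline and gives a genuinely different route from the paper's. You aim to \emph{explicitly identify} the fiber product $S\times_{\mathsf{M}_{\Uplambda}(\mathbb{G}_{\mathsf{trop}}^r)}\mathsf{M}_{\Uplambda}(\mathsf A_X)$ as the pull-back subdivision of $\sigma$ induced by $h\colon\overline C_{\trop}\to\RR^r$ and $\Sigma_X\to\RR^r$, using the stability clause of Definition~\ref{def: artin-fan-stability} to pin down the curve modification and the lift $\widetilde h$ uniquely. The paper instead makes no attempt to describe the fiber product: it first factors $S\to\mathsf{M}_{\Uplambda}(\mathbb{G}_{\mathsf{trop}}^r)$ through a strict map to a logarithmically smooth $U_\sigma$, thereby reducing to toroidal $S$, and then verifies the three defining properties of a subdivision (birational, logarithmically \'etale, proper) separately and abstractly --- birationality on the trivial-log-structure locus, log-\'etaleness via the infinitesimal lifting criterion, and properness by importing the valuative-criterion argument of~\cite[Proposition~4.7.2]{ACMW}. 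Your method buys an explicit picture of what the subdivision actually is, which is useful elsewhere; the paper's method buys a clean proof that avoids precisely the bookkeeping you flag as the main obstacle (checking that each combinatorial type of $\widetilde h$ cuts out exactly one cone, that these glue, and that the resulting object satisfies the universal property of the fiber product rather than merely mapping to it). Your final remark deducing Proposition~\ref{prop:rigidmod} from strictness alone is too quick: the paper argues that the relevant square is cartesian, which needs the observation that the stability conditions on $\mathsf M_\Uplambda(X)$ and $\mathsf M_\Uplambda(\mathsf A_X)$ match up.
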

	
	In the proof that follows, we use the tropical curve associated to a family of logarithmic curves over an atomic base. We will also use the description of sections of the characteristic abelian sheaf via piecewise linear functions. The reader may consult Remarks~\ref{rem: tropical-curves} and~\ref{rem: pl-functions}.
	
	\begin{proof}
		Consider a logarithmic scheme $S$ with a map $S\to \mathsf M_\Uplambda(\mathbb G_{\trop})$. Denote the pullback of $\mathsf{M}_{\Uplambda}(\mathsf A_X) \to \mathsf{M}_{\Uplambda}(\mathbb{G}_{\mathsf{trop}}^r)$ by $S'$. It suffices to prove the following:
		
		\noindent
		{\bf Claim.} The map $S'\to S$ is a subdivision. 
		
		\noindent
		{\sc Step I. Reduction to a toroidal base.} It suffices to treat the case where $S$ is atomic~\cite[Section~2]{AW}. The relevant part of atomicity is that $S$ has an Artin fan, and this Artin fan is in fact the one associated to a cone $\sigma$. We claim there is a factorization of $S\to \mathsf{M}_{\Uplambda}(\mathbb{G}_{\mathsf{trop}}^r)$ as
		\[
		S\to U_\sigma\to \mathsf{M}_{\Uplambda}(\mathbb{G}_{\mathsf{trop}}^r),
		\]
		where $S\to U_\sigma$ is strict and $U_\sigma\to \mathsf{M}_{\Uplambda}(\mathbb{G}_{\mathsf{trop}}^r)$ is logarithmically \'etale. If $\mathsf{M}_{\Uplambda}(\mathbb{G}_{\mathsf{trop}}^r)$ were representable one can show this easily using relative  Artin fans. In this non-representable context, we argue directly. 
		
		The map $S\to \mathsf{M}_{\Uplambda}(\mathbb{G}_{\mathsf{trop}}^r)$ consists of a logarithmic curve $C/S$ together with $r$ sections of $H^0(C,\overline M_C^{\mathsf{gp}})$. The latter datum can be packaged combinatorially. The curve $C/S$ determines a cone complex $C_{\trop}$ with a map $C_{\trop}\to \sigma$ and the $r$ sections can be understood as a $\mathbb R^r$-valued piecewise linear function on $C_{\trop}$. 
		
		By the results of~\cite{CCUW} we have a moduli map 
		\[
		\sigma\to \cM_{g,n}^{\trop}.
		\]
		We can pass to the associated Artin fans $\mathsf A_\sigma\to a^\star\cM_{g,n}^{\trop}$, and note that this is logarithmically \'etale. By using the strict and smooth map $\Mbar_{g,n}\to  a^\star\cM_{g,n}^{\trop}$ we define:
		\[
		U_\sigma:=\mathsf A_\sigma\times_{a^\star\cM_{g,n}^{\trop}} \Mbar_{g,n}.
		\]
		By the mapping property of this fiber product, we have a map
		\[
		S\to U_\sigma,
		\]
		and since $S\to \mathsf A_\sigma$ is strict by hypothesis so is this map. Furthermore, if $C_\sigma\to U_\sigma$ is the pullback of the universal curve, then the associated cone complex of this family, by construction, is $C_{\trop}\to \sigma$. Recall it is exactly this family that possess the map to $\mathbb R^r$. By reversing the combinatorial dictionary describing sections of the characteristic abelian sheaf, there is a diagram:
		\[
		\begin{tikzcd}
		C\arrow{d}\arrow{r}\arrow[rd, phantom, "\square"] & C_\sigma \arrow{r} \arrow{d} & \mathbb G_{\trop}^r\\
		S\arrow{r} & U_\sigma.&
		\end{tikzcd}
		\]
		Abstract nonsense reduces the claim to the case when $S\to \mathsf M_\Uplambda(\mathbb G_{\trop}^r)$ is logarithmically \'etale, i.e. $S = U_\sigma$. 
		
		We make a further reduction. The map $\mathsf M_\Uplambda(\mathbb G_{\trop}^r)\to\Spec(\CC)$ is logarithmically smooth. To see this, we consider the infinitesimal lifting criterion for logarithmically \'etale maps, applied to $\mathsf M_\Uplambda(\mathbb G_{\trop}^r)\to \Mbar_{g,n}$. Fix $T$ a logarithmic scheme and $T\to \mathsf M_\Uplambda(\mathbb G_{\trop}^r)$ is a morphism and let $T\subset T'$ be a strict square zero extension, together with a compatible map
		\[
		T'\to \Mbar_{g,n}.
		\]
		The lifting problem for $T'\dashrightarrow \mathsf M_\Uplambda(\mathbb G_{\trop}^r)$ depends only on the characteristic sheaf of the logarithmic structure on $T'$ and on the associated family of curves. Since the extension is strict, these data are unchanged, so there is a unique lift. Thus $\mathsf M_\Uplambda(\mathbb G_{\trop}^r)\to \Mbar_{g,n}$ is logarithmically \'etale.
		
		We therefore conclude that the composite map
		$$
		S\to \mathsf M_\Uplambda(\mathbb G_{\trop}^r)\to \Mbar_{g,n}\to \Spec \mathbb C
		$$ 
		is logarithmically smooth. In particular, it suffices to prove our claim above when the test scheme $S$ is logarithmically smooth over a point, i.e. ``toroidal''.
		
		\noindent
		{\sc Step II. The case of a toroidal base.} We take a logarithmically smooth and atomic scheme $S$ with $S\to  \mathsf M_\Uplambda(\mathbb G_{\trop}^r)$. One can now directly check that the map
		\[
		S'\to S
		\]
		is birational, logarithmically \'etale, and proper and is therefore a subdivision. We say a word about the verifications. For birationality, observe that because $S$ is logarithmically smooth over a point, the locus $S^\circ\subset S$ where the logarithmic structure is trivial is dense. The stacks $\mathsf M_\Uplambda(\mathsf A_X)$ and $\mathsf M_\Uplambda(\mathbb G_{\trop}^r)$ are identical on schemes with trivial logarithmic structure, so
		\[
		S'\to S
		\]
		carries $S^\circ$ isomorphically onto its image. To see the map is logarithmically \'etale, one can again apply the infinitesimal lifting criterion; the logic is very similar to the application in the previous step -- the lifting criterion concerns strict square-zero extensions, and the data to lift depend only on the characteristic sheaf. Finally, the map is proper because it satisfies the valuative criterion for properness. For this, one can directly apply the proof of~\cite[Proposition~4.7.2]{ACMW} with cosmetic changes; this proof treats the case where $\mathsf A_X\to \mathbb G^r_{\trop}$ is replaced by a subdivision of Artin fans. But the representability is not used in the verification of the valuative criterion, and our stability condition plays the role of the stability condition on the spaces denoted there as $\fM(\mathcal Y\to \mathcal X)$. 
	\end{proof}
	
	We prove the ``geometric'' version. The discussion in~\cite[Section~4]{AW} proves a very similar result.
	
	\subsubsection{Proof of Proposition~\ref{prop:rigidmod-trop}} 
	Consider the diagram
	\[
	\begin{tikzcd}
	\mathsf M_\Uplambda(X)\arrow{d}\arrow{r} & \mathsf M_\Uplambda(\mathbb G_{\mathsf{log}}^r)\arrow{d}\\
	\mathsf M_\Uplambda(\mathsf A_X)\arrow{r} & \mathsf M_\Uplambda(\mathbb G_{\mathsf{trop}}^r).
	\end{tikzcd}
	\]
	In Proposition~\ref{prop:rigidmod-trop} we proved the bottom horizontal was a subdivision, so it will suffice to show that the square is fibered. Let $G$ denote the fiber product $\mathsf M_\Uplambda(\mathsf A_X)\times_{\mathsf M_\Uplambda(\mathbb G_{\trop}^r)} \mathsf M_\Uplambda(\mathbb G_{\mathsf{log}}^r)$. The universal property of the fiber product immediately gives us a map $\mathsf M_\Uplambda(X)\to G$
	
	Conversely, the fiber product fitting into the square parameterizes unfilled diagrams:
	\[
	\begin{tikzcd}
	C\arrow{d}\arrow[dashed]{r} \arrow[bend left=25]{rr}&X \arrow{r}\arrow{d} &\mathsf A_X\arrow{d}\\
	\overline C\arrow{r}& \mathbb G_{\mathsf{log}}^r\arrow{r} &\mathbb G_{\trop}^r.
	\end{tikzcd}
	\]
	The right square is fibered so we get a filling of the dashed arrow {\it canonically}. We claim the map $C\to X$ is stable. In fact, this has been reverse engineered -- the curves $C$ and $\overline C$ differ only by semistable, i.e. $2$-pointed $\mathbb P^1$-components. By the stability condition, the two special points on such a component are mapped to different locally closed strata in $\mathsf A_X$, and therefore in $X$. The new semistable components therefore cannot be contracted in the map to $X$, and so are stable. We therefore have a moduli map $G\to\mathsf M_\Uplambda(X)$. 
	
	The maps in the two paragraphs above are clearly inverse, and the proposition follows. \qed

	\begin{remark}[Previous sightings]
		Maps to logarithmic tori have appeared in the literature in a few places already. In the genus $0$ case, the spaces were studied in \cite{RW19} and used to give a unified perspective on earlier work on the genus $0$ logarithmic Gromov--Witten theory of toric varieties~\cite{CMR14b,CS12,R15b}. It appeared in earlier work on stable maps to genus $1$~\cite{RSW17B} and is the heart of interactions between logarithmic geometry and the Picard variety~\cite{MW17,MW18}. 
	\end{remark}
	
	\subsection{Maps to rubber $\mathbb{G}_{\mathsf{log}}^{r}$ and $\mathbb{G}_{\mathsf{trop}}^{r}$, and the work of Marcus--Wise}
	We define ``rubber'' moduli spaces of maps with target $\mathbb{G}_{\mathsf{log}}^r$. In these spaces, maps to logarithmic tori are identified if they differ by the translation action of $\mathbb{G}_{\mathsf{log}}^r$. The rank $1$ case is the subject of~\cite{MW17}. 
	
	Perhaps interestingly, these spaces are easier to define than rubber spaces of relative stable maps~\cite{GV05,MP06}, since stability for ordinary and rubber maps amount to stability of the domain. 
	
	
	\begin{definition}
		Let $\mathfrak{M}^{\mathsf{rub}}_{\Uplambda}(\mathbb{G}_{\mathsf{log}}^r)$ be the {\it moduli space of prestable logarithmic maps to rubber $\mathbb{G}_{\mathsf{log}}^r$} analogously to $\mathfrak{M}_{\Uplambda}(\mathbb{G}_{\mathsf{log}}^r)$, but with the following change: for a family of curves $\pi:C\to S$, replace the $r$ sections of $M_{C}^{\mathsf{gp}}$ by $r$ sections on $S$ of $\pi_\star(M_{C}^{\mathsf{gp}}) / M_S^{\mathsf{gp}}$. Let $\mathsf{R}_{\Uplambda}(\mathbb{G}_{\mathsf{log}}^r)$ be the {\it moduli space of stable logarithmic maps to rubber $\mathbb{G}_{\mathsf{log}}^{n}$} as the subcategory of $\mathfrak{M}^{\mathsf{rub}}_{\Uplambda}(\mathbb{G}_{\mathsf{log}}^r)$ where the curves are stable.
		
		The spaces $\mathfrak{M}^{\mathsf{rub}}_{\Uplambda}(\mathbb{G}_{\mathsf{trop}}^r)$  are defined similarly using $\pi_\star(\overline M_{C}^{\mathsf{gp}})$ and $\overline M_S^{\mathsf{gp}}$. The space $\mathsf{R}_{\Uplambda}(\mathbb{G}_{\mathsf{trop}}^r)$ is the locus where the curve is stable and the maps are balanced, see Definition~\ref{def: glog-def}.
	\end{definition}
	
	
	
	\subsubsection{Summarizing the work of Marcus--Wise}\label{rem:stackwlog}
	Unlike $\mathfrak{M}_{\Uplambda}(\mathbb{G}_{\mathsf{log}}^r)$ the space $\mathfrak{M}_{\Uplambda}^{\mathsf{rub}}(\mathbb{G}_{\mathsf{log}}^r)$ is representable by an algebraic stack with logarithmic structure. This can be understood using observations due to Marcus--Wise~\cite{MW17}. Let us collect the basic facts from this paper. 
	
	First, if $A$ is a contact order vector, the authors construct a mapping stack $\fM_{g,A}^{\mathsf{rub}}(\mathbb G_{\mathsf{trop}})$, consisting of prestable curves equipped with a section of the characteristic sheaf. The contact order is given by $A$. This is an Artin stack equipped with logarithmic structure denoted $\mathbf{Div}_{g,A}$. 
	
	Second, the authors show that there is an Abel--Jacobi map to the universal Picard stack
	\[
	{aj}_A:\fM_{g,A}^{\mathsf{rub}}(\mathbb G_{\mathsf{trop}})\to\mathbf{Pic},
	\]
	sending a section of the characteristic sheaf on a logarithmic curve $C$ to its associated $\mathcal O_C^\star$-torsor. The locus where the torsor is trivial was denoted there as $\mathbf{Div}_{g,A}(\mathcal O)$. It is exactly the space we call $\fM_{g,A}^{\mathsf{rub}}(\mathbb G_{\mathsf{log}})$. 
	
	Third, stable versions of these spaces may be defined by restriction along $\Mbar_{g,n}\to \fM_{g,n}$, i.e. imposing stability of the underlying curve, ignoring the logarithmic structure. The spaces are respectively denoted $\mathsf R_{g,A}(\mathbb G_{\mathsf{trop}})$ and $\mathsf R_{g,A}(\mathbb G_{\mathsf{log}})$. The stable versions are Deligne--Mumford stacks, rather than merely Artin stacks. 
	
	And fourth, they show that the natural maps $\fM_{g,A}^{\mathsf{rub}}(\mathbb G_{\mathsf{log}})\to\fM_{g,n}$ and $\mathsf R_\Uplambda(\mathbb G_{\mathsf{log}})\to\Mbar_{g,n}$ are logarithmic monomorphisms. For example, in the case of the stable map space, there is a factorization:
	\[
	\mathsf R_{g,A}(\mathbb G_{\mathsf{log}})\to \mathsf R_{g,A}(\mathbb G_{\mathsf{trop}})\to\Mbar_{g,n},
	\]
	where the first map is a strict closed immersion and the second map is a logarithmically \'etale monomorphism. In particular, $\mathsf R_{g,A}(\mathbb G_{\mathsf{trop}})\to\Mbar_{g,n}$ can be obtained composing a representable logarithmic modification, a generalized root construction, and an open immersion. An analogous statement holds for the prestable mapping spaces, though we will not need this. 
	
	The higher rank case behaves similarly on all fronts. We have the observation that:
	\begin{equation*}
	\mathfrak{M}_{\Uplambda}^{\mathsf{rub}}(\mathbb{G}_{\mathsf{log}}^r)=\mathfrak{M}_{g, A_1}^{\mathsf{rub}}(\mathbb{G}_{\mathsf{log}}) \times_{\mathfrak{M}_{g,n}} \dots \times_{\mathfrak{M}_{g,n}} \mathfrak{M}_{g, A_r}^{\mathsf{rub}}(\mathbb{G}_{\mathsf{log}}),
	\end{equation*}
	where the fiber product is taken in the category of fine and saturated logarithmic schemes. We also have the identification
	\begin{equation*}
	\textbf{Div}_{g, A_i}(\mathcal{O})\cong\mathfrak{M}_{g, A_i}^{\mathsf{rub}}(\mathbb{G}_{\mathsf{log}}),  \ \ \ \ i=1,\ldots, r.
	\end{equation*}
	Therefore $\mathfrak{M}_{\Uplambda}^{\mathsf{rub}}(\mathbb{G}_{\mathsf{log}}^r)$ is representable. Similarly $\mathsf{R}_\Uplambda(\mathbb G_{\mathsf{log}}^r)$ and $\mathsf{R}_\Uplambda(\mathbb G_{\mathsf{trop}}^r)$ are schematically representable, and carry similar relationships with the moduli of curves from the fourth point. 
	
	\subsection{Comparing rigid and rubber}
	The space $\mathfrak{M}_{\Uplambda}^{\mathsf{rub}}(\mathbb{G}_{\mathsf{log}}^r)$ should be thought of as the moduli space of logarithmic stable maps to $\mathbb{G}_{\mathsf{log}}^r$ upto shifts by $\mathbb{G}_{\mathsf{log}}^r$. This suggests the existence of a map:
	\begin{equation*}
	\upeta: \mathfrak{M}_{\Uplambda}(\mathbb{G}_{\mathsf{log}}^r) \to \mathfrak{M}_{\Uplambda}^{\mathsf{rub}}(\mathbb{G}_{\mathsf{log}}^r).
	\end{equation*}
	Precisely, a map from a curve $\pi:C\to S$ to $\mathbb G_{\mathsf{log}}^r$ is given by $r$ sections in $\pi_\star(M_C^{\mathsf{gp}})$. Sending each section $s$ to the corresponding section $\bar{s}$ of $\pi_\star(M_C^{\mathsf{gp}})/M_S^{\mathsf{gp}}$, we obtain the map above.

	Since $\mathbb{G}_{\mathsf{log}}^r$ is a group sheaf on the \'etale site of $\textbf{LogSch}$, one can make sense of $\mathbb{G}_{\mathsf{log}}^r$-torsors, and one expects that $\eta$ is such a torsor. We give the basic definitions of these objects. The discussion is essentially standard, but we provide some details, as the notion appears prominently in the text.
	
	\begin{definition}
		Let $P$ be a sheaf valued in sets on the strict \'etale site of a logarithmic scheme $S$. We assume we also have a fixed group action of a group sheaf $\mathscr{G}$ on $P$. We call $P$ a {\it $\mathscr{G}$-torsor} over $S$ if the following conditions are satisfied.
		\begin{enumerate}[(i)]
			\item if $P(U)$ is nonempty then the action of $\mathscr{G}(U)$ on $P(U)$ is simply transitive.
			\item There exists an open cover $\{U_i\}_{i\in I}$ of $X$ such that $P(U_i)$ is nonempty for all $i\in I$.
		\end{enumerate}
	\end{definition}
	
	\begin{remark}
		Condition (i) implies $P(U)$ can be $\mathscr{G}$-equivariantly identified with $\mathscr{G}(U)$. For an algebraic group $G$ if we set $\mathscr{G}=Mor(-, G)$ we recover the more familiar notion of a $G$-torsor.
	\end{remark}
	
	\begin{definition}\label{def: associated-torsor}
		Let $F$ be a functor valued in $\textbf{Sets}$ on $\textbf{LogSch}$. Let $\mathscr{G}$ be a group sheaf on the strict \'etale site of $\textbf{LogSch}$ with an action on $F$. Fix a logarithmic scheme $S$. We call a $\mathscr{G}$-invariant morphism $\pi: F\to S$ a $\mathscr{G}$-torsor if the following functor on the strict \'etale site of $S$, denoted $\mathcal F_\pi$, is a $\mathscr{G}$-torsor: an \'etale open $U\to S$ we assign the set $$\mathcal{F}_{\pi}(U):= \left\{x\in F(U) \mid \pi_U(x)= U\to S\right\}.$$
	\end{definition}
	
	The following proposition gives a transparent relationship between rigid and rubber, and illustrates the utility of working with the logarithmic torus.
	
	\begin{proposition}
		\label{prop:istorsor}
		The morphism $\eta$ is a $\mathbb{G}_{\mathsf{log}}^r$-torsor.
	\end{proposition}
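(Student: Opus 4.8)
The plan is to verify the two torsor axioms of Definition~\ref{def: associated-torsor} directly, after base changing $\eta$ along an arbitrary logarithmic scheme $S\to\mathfrak{M}^{\mathsf{rub}}_{\Uplambda}(\mathbb{G}_{\mathsf{log}}^r)$; since both axioms are strict \'etale local on the target, nothing is lost. Such a morphism is the datum of a family of logarithmic curves $\pi:C\to S$ of type $(g,n)$ together with $r$ sections $\overline s_1,\dots,\overline s_r$ of $\pi_\star(M_C^{\mathsf{gp}})/M_S^{\mathsf{gp}}$ with contact profile $\underline A$, and the induced sheaf $\mathcal{F}_\eta$ on the strict \'etale site of $S$ (in the notation of Definition~\ref{def: associated-torsor}) assigns to $U\to S$ the set of $r$-tuples $(s_1,\dots,s_r)$ with $s_i\in H^0(C_U,M_{C_U}^{\mathsf{gp}})$ mapping to $\overline s_i|_U$. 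Since the contact orders are already recorded by the classes $\overline s_i$, every such lift automatically has contact profile $\underline A$, so no condition is lost, and $\mathcal{F}_\eta$ is a sheaf of sets, the lifting datum carrying no automorphisms over a fixed object of the target. The group $\mathbb{G}_{\mathsf{log}}^r(U)=H^0(U,M_U^{\mathsf{gp}})^{\oplus r}$ acts by translation, $(g_i)\cdot(s_i)=(s_i+\pi^\star g_i)$, and $\eta$ is invariant for this action.

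Everything then runs off the short exact sequence of abelian sheaves on the strict \'etale site of $S$,
\[
0\longrightarrow M_S^{\mathsf{gp}}\xrightarrow{\ \pi^\sharp\ }\pi_\star\bigl(M_C^{\mathsf{gp}}\bigr)\longrightarrow \pi_\star\bigl(M_C^{\mathsf{gp}}\bigr)/M_S^{\mathsf{gp}}\longrightarrow 0 ,
\]
once one knows that $\pi^\sharp$ is injective. Granting this: axiom (ii), local nonemptiness, is immediate, because the third arrow is by construction an epimorphism of sheaves, so each $\overline s_i$ admits a lift to a section of $\pi_\star M_C^{\mathsf{gp}}$ over a strict \'etale cover of $S$. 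For axiom (i), given $(s_i)$ and $(s_i')$ in a nonempty $\mathcal{F}_\eta(U)$, the difference $s_i'-s_i$ lies in the kernel of the quotient map over $U$, hence equals $\pi^\star g_i$ for a unique $g_i\in H^0(U,M_U^{\mathsf{gp}})$ by exactness together with injectivity of $\pi^\sharp$; this is precisely the statement that $\mathbb{G}_{\mathsf{log}}^r(U)$ acts freely and transitively on $\mathcal{F}_\eta(U)$, freeness being again just injectivity of $\pi^\sharp$.

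The only step calling for a genuine argument — and the one I expect to be the main obstacle — is the injectivity of $\pi^\sharp:M_S^{\mathsf{gp}}\to\pi_\star M_C^{\mathsf{gp}}$. I would establish it in two moves, using that $\pi$ is proper and flat with geometrically connected reduced fibres. First, the characteristic map $\overline M_S^{\mathsf{gp}}\to\pi_\star\overline M_C^{\mathsf{gp}}$ is injective: it factors through the isomorphism $\overline M_S^{\mathsf{gp}}\xrightarrow{\ \sim\ }\pi_\star\pi^{-1}\overline M_S^{\mathsf{gp}}$ followed by $\pi_\star$ applied to $\pi^{-1}\overline M_S^{\mathsf{gp}}\to\overline M_C^{\mathsf{gp}}$, and the latter is injective on stalks — the identity at a generic point of a component of $C$, the evident inclusion into $\overline M_{S,s}^{\mathsf{gp}}\oplus\mathbb{Z}$ at a marking, and into the amalgam $\overline M_{S,s}^{\mathsf{gp}}\oplus_{\mathbb{Z}}\mathbb{Z}^2$ at a node. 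Hence $\ker\pi^\sharp\subseteq\ker(M_S^{\mathsf{gp}}\to\overline M_S^{\mathsf{gp}})=\mathcal{O}_S^\star$. Second, $\mathcal{O}_S^\star\to\pi_\star\mathcal{O}_C^\star$ is an isomorphism, because $\pi_\star\mathcal{O}_C=\mathcal{O}_S$ for such $\pi$; therefore $\ker\pi^\sharp=0$. With injectivity secured, the bookkeeping of the previous paragraph is purely formal. Finally, the rank $r$ statement follows from the rank one case, since $\mathfrak{M}_{\Uplambda}(\mathbb{G}_{\mathsf{log}}^r)\to\mathfrak{M}^{\mathsf{rub}}_{\Uplambda}(\mathbb{G}_{\mathsf{log}}^r)$ is the fibre product over $\mathfrak{M}^{\mathsf{log}}_{g,n}$ of $r$ copies of the rank one $\eta$, and a product of $\mathbb{G}_{\mathsf{log}}$-torsors is a $\mathbb{G}_{\mathsf{log}}^r$-torsor — alternatively, one repeats the argument above with $r$-tuples of sections throughout.
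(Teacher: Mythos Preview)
Your proof is correct and follows essentially the same approach as the paper: base change to a test scheme $S$, invoke the short exact sequence $0\to M_S^{\mathsf{gp}}\to\pi_\star(M_C^{\mathsf{gp}})\to\pi_\star(M_C^{\mathsf{gp}})/M_S^{\mathsf{gp}}\to 0$, and read off the torsor axioms from the coset structure of the fibres of the quotient map. The paper simply writes down this exact sequence and the coset argument without further comment, whereas you supply the additional verification that $\pi^\sharp$ is injective via the characteristic sheaf and the identity $\pi_\star\mathcal O_C=\mathcal O_S$; this is a welcome elaboration of a step the paper takes for granted.
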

	\begin{proof}
		We first assume $r=1$. For a logarithmic scheme $S$ with a map to $\textbf{Div}_{g, A}(\mathcal{O})$ we consider the following fiber product:
		\begin{equation*}
		\begin{tikzcd}
		S \times_{\textbf{Div}_{g, A}(\mathcal{O})} \mathfrak{M}_{\Uplambda}(\mathbb{G}_{\mathsf{log}}^r) \arrow[r, ] \arrow[rd, phantom, "\square"] \arrow[d,swap, "\delta"] 
		& \mathfrak{M}_{\Uplambda}(\mathbb{G}_{\mathsf{log}}^r) \arrow[d, "\eta"] \\
		S \arrow[r,]
		&  \textbf{Div}_{g, A}(\mathcal{O})
		\end{tikzcd}
		\end{equation*}
		We show that $\delta$ is a $\mathbb{G}_{\mathsf{log}}$-torsor. The map $S\to \textbf{Div}_{g, A}(\mathcal{O})$ determines a log curve $\pi\colon C\to S$ together with a section $s$ of $H^0(S,{\pi_\star(M_C^{\mathsf{gp}})}/M_S^{\mathsf{gp}})$. We consider the exact sequence of sheaves on $S$
		\begin{equation*}
		0\to M_{S}^{\mathsf{gp}}\to \pi_\star(M_C^{\mathsf{gp}}) \to {\pi_\star(M_C^{\mathsf{gp}})}/{M_{S}^{\mathsf{gp}}}\to 0.
		\end{equation*}
		From the map $\delta$ above, and in keeping with the notation of Definition~\ref{def: associated-torsor}, we have the functor that assigns to an \'etale open $U\to S$ 
		$$
		\mathcal{F}_{\delta}(U)=\left\{s'\in H^0(C_{\mid U}, M_{C_{\mid U}}^{\mathsf{gp}}) \mid \phi_U(s')=s\right\},
		$$ where $\phi$ is the second arrow in the preceding short exact sequence. We have a natural action of the group sheaf $\mathbb{G}_{\mathsf{log}}$ on $\mathcal{F}_{\delta}$. If $\mathcal{F}_{\delta}(U)$ is nonempty it is a coset in $H^0(C_{\mid U}, M_{C_{\mid U}}^{\mathsf{gp}})$. This implies the action of $H^0(U, M_U^{\mathsf{gp}})$ on $\mathcal{F}_{\delta}(U)$ is simply transitive. The section $s$ locally comes from a section of $\pi_\star(M_{C}^{\mathsf{gp}})$. Therefore we can find a cover by \'etale open sets $U$ such that $\mathcal{F}_{\delta}(U)$ is nonempty. The result follows.
	\end{proof}
	
	\begin{corollary}
		The morphism $\eta$ above restricts to a morphism
		\[
		\varepsilon\colon\mathsf M_\Uplambda(\mathbb G_{\mathsf{log}}^r)\to \mathsf{R}_\Uplambda(\mathbb G_{\mathsf{log}}^r)
		\]
		which is also a $\mathbb G_{\mathsf{log}}^r$-torsor. 
	\end{corollary}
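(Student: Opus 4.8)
The plan is to realize $\varepsilon$ as a base change of $\eta$ along the inclusion of a stable locus, and then to use that the torsor condition of Definition~\ref{def: associated-torsor} is preserved under base change.

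First I would observe that $\eta$ does not touch the underlying prestable curve: by construction it replaces the $r$ sections of $\pi_\star M_C^{\mathsf{gp}}$ by their classes in $\pi_\star M_C^{\mathsf{gp}}/M_S^{\mathsf{gp}}$, leaving the family $C\to S$ unchanged. Since, by definition, $\mathsf{M}_\Uplambda(\mathbb G_{\mathsf{log}}^r)\subset\mathfrak{M}_\Uplambda(\mathbb G_{\mathsf{log}}^r)$ and $\mathsf{R}_\Uplambda(\mathbb G_{\mathsf{log}}^r)\subset\mathfrak{M}^{\mathsf{rub}}_\Uplambda(\mathbb G_{\mathsf{log}}^r)$ are the open loci where the source curve is stable, $\eta$ carries the former into the latter; this produces the morphism $\varepsilon$. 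The same observation shows more: a point of $\mathfrak{M}_\Uplambda(\mathbb G_{\mathsf{log}}^r)$ is sent by $\eta$ into $\mathsf{R}_\Uplambda(\mathbb G_{\mathsf{log}}^r)$ exactly when its domain curve is stable, which is precisely the condition for it to already lie in $\mathsf{M}_\Uplambda(\mathbb G_{\mathsf{log}}^r)$. Hence the square
\[
\begin{tikzcd}
\mathsf{M}_\Uplambda(\mathbb G_{\mathsf{log}}^r)\arrow[r]\arrow[d, "\varepsilon"] & \mathfrak{M}_\Uplambda(\mathbb G_{\mathsf{log}}^r)\arrow[d, "\eta"]\\
\mathsf{R}_\Uplambda(\mathbb G_{\mathsf{log}}^r)\arrow[r] & \mathfrak{M}^{\mathsf{rub}}_\Uplambda(\mathbb G_{\mathsf{log}}^r)
\end{tikzcd}
\]
is Cartesian, the horizontal arrows being the open inclusions of the stable loci.

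Next I would invoke the general fact that a $\mathscr{G}$-torsor pulls back to a $\mathscr{G}$-torsor: if $\pi\colon F\to S$ is a $\mathscr{G}$-torsor and $T\to S$ is any morphism of logarithmic schemes, then the sheaf $\mathcal{F}_{\pi\times_S T}$ on the strict \'etale site of $T$ is the restriction of $\mathcal{F}_\pi$, and both the simple transitivity of the $\mathscr{G}$-action and the existence of an \'etale cover with nonempty sections are \'etale-local conditions, hence preserved. Applying this with $\pi=\eta$, which is a $\mathbb G_{\mathsf{log}}^r$-torsor by Proposition~\ref{prop:istorsor}, and $T\to S$ the inclusion $\mathsf{R}_\Uplambda(\mathbb G_{\mathsf{log}}^r)\hookrightarrow\mathfrak{M}^{\mathsf{rub}}_\Uplambda(\mathbb G_{\mathsf{log}}^r)$, and identifying the fibre product via the Cartesian square above, we conclude that $\varepsilon$ is a $\mathbb G_{\mathsf{log}}^r$-torsor.

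The only genuinely substantive step, and the one I would want to pin down carefully, is the Cartesianity of the square above --- equivalently, the assertion that imposing stability downstairs pulls back to imposing stability upstairs. This is immediate once one grants that the translation action of $\mathbb G_{\mathsf{log}}^r$ is invisible to the source curve, so that stability is read off from literally the same data before and after passing to rubber; the remainder of the argument is formal.
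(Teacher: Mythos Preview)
Your proposal is correct and follows essentially the same approach as the paper. The paper's proof is a terse two-sentence version of your argument: it notes that stability on both sides is simply stability of the underlying curve, so restricting $\eta$ yields $\varepsilon$, and the torsor property is inherited. Your Cartesian-square formulation and explicit invocation of base-change stability for torsors make the implicit steps of the paper's argument explicit, but there is no substantive difference in strategy.
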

	
	\begin{proof}
		Stability for maps to $\mathbb G_{\mathsf{log}}^r$ and for rubber maps to $\mathbb G_{\mathsf{log}}^r$ is the condition that the underlying curve is stable. So if we restrict the domain of $\eta$ from $\fM_\Lambda(\mathbb G_{\mathsf{log}}^r)$ to $\mathsf M_\Uplambda(\mathbb G_{\mathsf{log}}^r)$, the resulting map to $\fM^{\mathsf{rub}}_\Lambda(\mathbb G_{\mathsf{log}}^r)$ factors through $\mathsf{R}_\Uplambda(\mathbb G_{\mathsf{log}}^r)$. Since $\eta$ is a $\mathbb G_{\mathsf{log}}^r$-torsor, the corollary follows.  
	\end{proof}
	
	\begin{proposition}
		There is an exact sequence of of sheaves of abelian groups on the \'etale site of $\mathfrak{M}_{g,n}$:
		\begin{equation*}
		0\to \mathbb{G}_{\mathsf{log}}^r\to \mathfrak{M}_{g,n}(\mathbb{G}_{\mathsf{log}}^r)\to \mathfrak{M}_{g,n}^{\mathsf{rub}}(\mathbb{G}_{\mathsf{log}}^r)\to 0.
		\end{equation*}
	\end{proposition}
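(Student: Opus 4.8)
The plan is to read the proposition off the short exact sequence of sheaves on the base that already appears in the proof of Proposition~\ref{prop:istorsor}. Since a finite product of exact sequences of abelian sheaves is again exact, I would first reduce to $r=1$. On a strict \'etale $S\to\mathfrak M_{g,n}$ with associated log curve $\pi\colon C\to S$, the three sheaves take the values
\begin{align*}
\mathbb G_{\mathsf{log}}(S)&=H^0(S,M_S^{\mathsf{gp}}),\\
\mathfrak M_{g,n}(\mathbb G_{\mathsf{log}})(S)&=H^0(C,M_C^{\mathsf{gp}})=H^0(S,\pi_\star M_C^{\mathsf{gp}}),\\
\mathfrak M_{g,n}^{\mathsf{rub}}(\mathbb G_{\mathsf{log}})(S)&=H^0(S,\mathcal Q_S),
\end{align*}
where $\mathcal Q_S:=\pi_\star M_C^{\mathsf{gp}}/M_S^{\mathsf{gp}}$ is the cokernel taken in sheaves on $S$; the first map of the sequence is pullback $\pi^\star$ and the second is the quotient. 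A quick preliminary check is that these assignments are genuinely sheaves on $\mathfrak M_{g,n}$ and that the two maps are morphisms of abelian sheaves; this amounts to the compatibility of $\pi_\star M_C^{\mathsf{gp}}$ and of the cokernel $\mathcal Q_S$ with strict \'etale base change on $S$, which is immediate.

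Exactness at the first two terms is formal. The short exact sequence
\[
0\to M_S^{\mathsf{gp}}\longrightarrow \pi_\star M_C^{\mathsf{gp}}\longrightarrow \mathcal Q_S\to 0
\]
of sheaves on $S$ — whose exactness, in particular the injectivity of $M_S^{\mathsf{gp}}\to\pi_\star M_C^{\mathsf{gp}}$, is recalled in the proof of Proposition~\ref{prop:istorsor}, using that the fibres of $\pi$ are proper and geometrically connected and that $\overline M_C$ is pulled back from $S$ at the generic point of each component — becomes left exact on applying $H^0(S,-)$. Hence $0\to H^0(S,M_S^{\mathsf{gp}})\to H^0(S,\pi_\star M_C^{\mathsf{gp}})\to H^0(S,\mathcal Q_S)$ is exact for every $S$, so the sequence of \emph{presheaves} in the proposition is exact at its first two terms, and \emph{a fortiori} so is the associated sequence of sheaves.

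The remaining point is surjectivity of $\mathfrak M_{g,n}(\mathbb G_{\mathsf{log}})\to\mathfrak M_{g,n}^{\mathsf{rub}}(\mathbb G_{\mathsf{log}})$ as a morphism of sheaves on $\mathfrak M_{g,n}$; this is the one genuinely sheaf-theoretic assertion and the only place a subtlety arises, since the statement is false at the presheaf level. I would argue exactly as in the verification of condition~(ii) of the torsor property in Proposition~\ref{prop:istorsor}: given $S\to\mathfrak M_{g,n}$ and $\bar s\in H^0(S,\mathcal Q_S)$, the target is by construction the sheaf cokernel of $M_S^{\mathsf{gp}}\to\pi_\star M_C^{\mathsf{gp}}$, hence the sheafification of the presheaf cokernel, so there is a strict \'etale cover $\{U_i\to S\}$ over which $\bar s$ lifts to a section $s_i\in H^0(U_i,\pi_\star M_C^{\mathsf{gp}})=H^0(C_{U_i},M_{C_{U_i}}^{\mathsf{gp}})$. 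Each composite $U_i\to S\to\mathfrak M_{g,n}$ is strict \'etale, and $s_i\in\mathfrak M_{g,n}(\mathbb G_{\mathsf{log}})(U_i)$ maps to $\bar s|_{U_i}$, which is exactly the epimorphism condition. Reinstating the $r$-fold products finishes the proof. In effect the exact sequence of the proposition is the abelian-sheaf shadow of the $\mathbb G_{\mathsf{log}}^r$-torsor $\eta$ of Proposition~\ref{prop:istorsor}, now assembled over all of $\mathfrak M_{g,n}$ rather than over a single $\mathfrak M_\Uplambda$; I do not expect any obstacle beyond keeping the presheaf/sheaf distinction in order.
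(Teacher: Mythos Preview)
The paper states this proposition without proof; it appears immediately after the torsor statement (Proposition~\ref{prop:istorsor}) and its corollary, and is evidently meant to be read as a direct repackaging of that result. Your argument is correct and is precisely the unwinding the paper leaves implicit: the short exact sequence of sheaves on $S$ from the proof of Proposition~\ref{prop:istorsor} gives exactness at the first two terms after applying $H^0$, and the local lifting that establishes condition~(ii) of the torsor property is exactly sheaf-surjectivity on the right. There is nothing to compare approaches against.
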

	
	\begin{remark}
		The exact sequence of~\cite[Theorem 4]{RW19} identifies
		\begin{equation*}
		\mathfrak{M}_{0,n}^{\mathsf{rub}}(\mathbb{G}_{\mathsf{log}})= \underline{\mathbb{Z}}_{0}^{n},
		\end{equation*}
		as sheaves on the \'etale site of $\mathfrak{M}_{0,n}$; the right hand side is the constant sheaf. This is a genus $0$ phenomenon, essentially coming from the fact that degree $0$ divisors and principal divisors on genus $0$ curves coincide. The $\ZZ_0$ above can be interpreted as the group of degree $0$ divisors supported on the marked points. In higher genus, this is certainly not true, and there is a virtual codimension $g$ condition cutting out the space of principal divisors inside the degree $0$ divisors.
	\end{remark}
	
	\section{Evaluation spaces}\label{sec: evaluation-spaces}

	We construct evaluation maps and evaluation spaces for moduli of maps to $\mathbb{G}_{\mathsf{log}}^r$. The torsor presentation of the rubber mapping space gives rise to a natural {\it rubber evaluation space}. We introduce rubber evaluation spaces, which play the key role in transferring integrals from rigid to rubber spaces. On the locus of non-degenerate maps the rubber evaluation has an elementary description, and the logarithmic torus allows us to extend this description to the boundary.
	
	\subsection{A review of evaluation maps} We review evaluation maps in logarithmic Gromov--Witten theory, focusing on certain subtleties that have not received an abundance of exposition. Much of the discussion applies to logarithmically smooth pairs, but we restrict to toric varieties. 
	
	Let $\mathsf M_\Uplambda(X)$ be the moduli of logarithmic stable maps to $X$ with numerical data $\Uplambda$. An $S$-point of the space is a diagram
	\[
	\begin{tikzcd}
	C\arrow{d}\arrow{r} & X\\
	S,
	\end{tikzcd}
	\]
	equipped with sections $p_i\colon S\to C$ corresponding to the marked points. In ordinary Gromov--Witten theory, composing $p_i$ with the universal map gives evaluations $\mathsf{ev}_i$ from $\mathsf M_\Uplambda(X)$ to $X$. 
	
	Logarithmic structures bring subtleties. Typically, the curve $C$ is given logarithmic structure by pulling back the natural logarithmic structure on the universal curve on the moduli space of (not necessarily minimal) logarithmic curves. Given a marked point, the section
	\[
	p_i\colon S\to C,
	\]
	is {\it not} logarithmic -- the logarithmic structure along the section is nontrivial relative to the base.
	
	There are two ways out of the issue. One can {\it add} logarithmic structure on $S$ (i.e. to the $\mathsf M_\Uplambda(X)$) or {\it remove} logarithmic structure along the image of $p_i$. We opt for the latter. 
	
	\subsubsection{Removing logarithmic structure} The universal curve $\mathcal C$ over $\mathfrak M^{\mathsf{log}}_{g,n}$ has a divisor given by the image of the section $p_i$. The logarithmic structure on the universal curve $\mathcal C$ comes from a normal crossings divisor, and removing this divisor produces a new logarithmic structure on $\mathcal C$. The map $\mathcal C\to\mathfrak M^{\mathsf{log}}_{g,n}$ is still logarithmic, since the divisor is horizontal. We pull this back to any given family of curves, and refer to it as {\it removing the logarithmic structure along $p_i$}. Removing the logarithmic structure along all $p_i$ is known in the literature as the vertical part of the logarithmic structure.
	
	\subsubsection{Trivial contact order points} We construct evaluation maps. First, assume the contact order of $p_i$ is trivial, or geometrically, that the marking generically maps to the interior of $X$. Since the contact order is trivial, the morphism $C\to X$ remains logarithmic {\it after} removing logarithmic structure along $p_i$. We obtain morphisms:
	\[
	\mathsf{ev}_i\colon \mathsf M_\Uplambda(X)\to X. 
	\]
	
	\subsubsection{Nontrivial contact order points} When $p_i$ has nontrivial contact order, we use the following trick. The contact order at $p_i$ determines a vector $v_i$ in the cocharacter lattice $N$. The vector $v_i$ lies in the relative interior of a cone $\sigma_i$ in the fan of $X$. Let $W_i$ be the closed toric stratum dual to $\sigma_i$. 
	
	Now, there is a rational map $X\dashrightarrow W_i$, since the torus in $W_i$ is naturally a quotient of the torus in $X$. We can blow up $X$ at strata that do not intersect $W_i$ such that the rational map extends to a morphism $X'\rightarrow W_i$. Note that this map is torus equivariant, so automatically logarithmic. We have a morphism on moduli spaces:
	\[
	\mathsf M_\Uplambda(X')\to \mathsf M_\Uplambda(W_i).
	\]
	The induced discrete data on the right is obvious for everything except the contact order. For that, note there is a projection from the cocharacter lattice of $X$ to that of $W_i$. The image of the contact order for each point determines the contact order for this point on the right hand side. 
	
	\begin{lemma}
		Let $[C\to X']$ be a logarithmic stable map with discrete data $\Uplambda$. In the composite map $[C\to X'\to W_i]$ the point $p_i$ has contact order $0$. 
	\end{lemma}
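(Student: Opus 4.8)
The plan is to reduce the statement to an immediate computation in cocharacter lattices, using the functoriality of contact orders under toric morphisms. First I would identify the relevant lattice map: by the orbit--cone correspondence the closed stratum $W_i$ dual to $\sigma_i$ is itself a complete toric variety whose dense torus has cocharacter lattice $N(\sigma_i) = N/(N\cap\Span(\sigma_i))$, and the rational map $X\dashrightarrow W_i$ --- which by construction becomes the genuine toric morphism $X'\to W_i$ after the blowups --- is induced by the quotient $\phi\colon N\twoheadrightarrow N(\sigma_i)$. This $\phi$ is exactly the projection of cocharacter lattices invoked in the construction of the moduli map $\mathsf M_\Uplambda(X')\to\mathsf M_\Uplambda(W_i)$, and the contact order of a marked point on $W_i$ is the image under $\phi$ of its contact order on $X'$. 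Since $X'$ has the same dense torus as $X$, the contact order of $p_i$ for $C\to X'$ is still the vector $v_i\in N$ recorded by $\Uplambda$, which lies in the relative interior of $\sigma_i$ by the choice of $\sigma_i$; writing $v_i = \sum_k a_k u_{i_k}$ with $a_k\in\mathbb N_{>0}$ as a positive combination of the primitive generators of $\sigma_i$, each $u_{i_k}$ lies in $\ker\phi = N\cap\Span(\sigma_i)$, where I use that $\sigma_i$ is a smooth cone so that this sublattice is exactly the one generated by the $u_{i_k}$.

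Hence $\phi(v_i) = \sum_k a_k\phi(u_{i_k}) = 0$, i.e.\ the contact order of $p_i$ in the composite $C\to X'\to W_i$ is the zero cocharacter, which is precisely the statement that $p_i$ has contact order $0$. If one prefers to see this tropically rather than through the lattice-projection formula for contact orders, one notes that $X'\to W_i$ is torus-equivariant hence logarithmic, so the composite is a logarithmic stable map to $W_i$ whose tropicalization is the composite $C_{\trop}\to\Sigma_{X'}\to\Sigma_{W_i}$; the leaf at $p_i$ maps to the ray spanned by $v_i$ and thence to the ray spanned by $\phi(v_i)=0$, and a marking has trivial contact order precisely when its tropicalization is the zero cocharacter.

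The combinatorics is essentially forced, so I do not expect a serious obstacle; the only point I would take care to spell out is the identification of $X'\to W_i$ with the lattice quotient $\phi$ together with the fact that $\ker\phi$ is the saturated sublattice spanned by $\sigma_i$ --- automatic since $\sigma_i$ is smooth --- so that $v_i$, being interior to $\sigma_i$, lies in this kernel. The remaining ingredients, namely the functoriality of contact orders and the invariance of the vector $v_i$ under passing to the blowup $X'$, are already part of the preceding discussion.
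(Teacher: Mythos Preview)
Your proposal is correct and follows essentially the same approach as the paper: both identify the cocharacter lattice of $W_i$ as the quotient of $N$ by the span of $\sigma_i$ and observe that the contact vector $v_i$, lying in $\sigma_i$, maps to zero under this quotient. Your version is considerably more detailed---spelling out the saturation issue via smoothness of $\sigma_i$ and offering an alternative tropical reading---whereas the paper dispatches the lemma in two sentences by working directly with cocharacter spaces (over $\RR$), where the saturation concern does not arise.
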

	
	\begin{proof}
		The cocharacter space of $W_i$ is the quotient of the cocharacter space of $X$ by the linear span of the cone $\sigma_i$. The contact order for $p_i$ lies in $\sigma_i$, so it is $0$ in the quotient. 
	\end{proof}
	
	By the lemma, the previous discussion about evaluations with trivial contact order, and the morphism $\mathsf M_\Uplambda(X')\to \mathsf M_\Uplambda(W_i)$ we have a logarithmic evaluation map
	\[
	\mathsf{ev}_i'\colon \mathsf M_\Uplambda(X')\to W_i.
	\]
	This will actually suffice for our purposes, as we are always prepared to blowup up the target further. The following is recorded for future reference:
	\begin{proposition}
		The logarithmic evaluation map $\mathsf{ev}_i'$ descends to
		\[
		\mathsf{ev}_i\colon \mathsf M_\Uplambda(X)\to W_i.
		\]
	\end{proposition}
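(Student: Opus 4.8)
The plan is to avoid the auxiliary blow-up at $p_i$ altogether: I will construct $\mathsf{ev}_i$ directly on $\mathsf M_\Uplambda(X)$ and check that it pulls back to $\mathsf{ev}_i'$ along the natural modification $\mathsf M_\Uplambda(X')\to\mathsf M_\Uplambda(X)$. The key point is that the section $p_i$ of any stable map in $\mathsf M_\Uplambda(X)$ lands in the open locus of $X$ over which $X'\to X$ is an isomorphism, so the recipe defining $\mathsf{ev}_i'$ already makes sense on $X$.

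First I isolate the relevant opens. Let $\pi\colon N\to N(\sigma_i)$ be the lattice projection inducing $X\dashrightarrow W_i$; its domain of definition $\mathcal U\subseteq X$ is torus-invariant, the rational map extends over $\mathcal U$ to a torus-equivariant (hence logarithmic) morphism $q\colon\mathcal U\to W_i$, and $W_i\subseteq\mathcal U$ because $\pi$ sends every cone of $\Sigma_X$ containing $\sigma_i$ into a cone of the star fan of $\sigma_i$. The centre of the chosen blow-up $X'\to X$ is disjoint from $W_i$, so we may choose an open $\mathcal V$ with $W_i\subseteq\mathcal V\subseteq\mathcal U$ over which $X'\to X$ is an isomorphism and $X'\to W_i$ is identified with $q|_{\mathcal V}$.

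Now I build $\mathsf{ev}_i$. For a family $[f\colon C\to X]$ over $S$ in $\mathsf M_\Uplambda(X)$, the contact-order data at $p_i$ (positive contact exactly with the divisors $D_j$ for $u_j$ a ray of $\sigma_i$, trivial contact with all other boundary divisors) force $f\circ p_i$ to factor through the dense orbit of $W_i$, hence through $\mathcal V$; thus $f^{-1}(\mathcal V)\subseteq C$ is an open containing the image of the section. On the curve obtained from $f^{-1}(\mathcal V)$ by removing the logarithmic structure along $p_i$, the section $p_i$ is strict, and the composite $q\circ f$ is a logarithmic morphism: away from $p_i$ it is a composite of logarithmic morphisms, and near $p_i$ the contact order of $q\circ f$ is $\pi(v_i)=0$ (exactly as in the Lemma above), so deleting the logarithmic structure along $p_i$ is harmless. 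Precomposing with $p_i$ produces a logarithmic morphism $\mathsf{ev}_i\colon\mathsf M_\Uplambda(X)\to W_i$; compatibility with base change and with minimality is routine, since $\mathcal U$, $q$, and the removal of logarithmic structure along a section are all functorial.

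Finally I check $\mathsf{ev}_i'=\mathsf{ev}_i\circ\rho$, where $\rho\colon\mathsf M_\Uplambda(X')\to\mathsf M_\Uplambda(X)$ is the morphism induced by $X'\to X$ (compose and stabilize; this exists and is a logarithmic modification by~\cite{AW}). For $[\tilde f\colon\tilde C\to X']$ with $\rho$-image $[f\colon C\to X]$, the marked point $\tilde p_i$ maps to the unique point of $X'$ over $f(p_i)\in\mathcal V$; since $X'\cong X$ over $\mathcal V$ and $X'\to W_i$ restricts to $q$ there, $\mathsf{ev}_i'(\tilde f)$ and $\mathsf{ev}_i(f)$ have the same underlying morphism of schemes, and since the logarithmic structure of $W_i$ is trivial near their common image they agree as logarithmic morphisms. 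I expect the only delicate step to be the assertion that $q\circ f$ stays logarithmic after removing the vertical logarithmic structure along $p_i$; this is where the geometry of $X\dashrightarrow W_i$ enters — it contracts precisely the contact direction $v_i$ — and it is a local computation at the section.
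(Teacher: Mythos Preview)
Your construction is correct in spirit and takes a genuinely different, more direct route than the paper's. The paper argues by descent: it composes $\mathsf M_\Uplambda(X')\xrightarrow{\mathsf{ev}_i'} W_i\hookrightarrow X'\to X$, observes that this factors through the evaluation $\mathsf M_\Uplambda(X)\to X$, and then invokes properness and surjectivity of $\mathsf M_\Uplambda(X')\to\mathsf M_\Uplambda(X)$ to conclude that the scheme-theoretic image of the latter lies in $W_i$. By contrast you build $\mathsf{ev}_i$ intrinsically on $\mathsf M_\Uplambda(X)$ via the partially defined toric projection $q\colon\mathcal U\to W_i$, using the auxiliary $X'$ only at the end to verify compatibility with $\mathsf{ev}_i'$. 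Your approach has the advantage of being explicit about why the resulting map is \emph{logarithmic}, a point the paper's short argument does not spell out.

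There is, however, one overclaim you should fix. You assert that the contact-order data force $f\circ p_i$ to land in the \emph{dense orbit} of $W_i$. This is false in general: the contact vector $v_i$ lying in the relative interior of $\sigma_i$ only guarantees that $p_i$ maps into a stratum $O_\tau$ with $\sigma_i\subseteq\tau$, i.e.\ $f(p_i)\in\overline{O_{\sigma_i}}=W_i$, but $\tau$ may strictly contain $\sigma_i$ (a component carrying $p_i$ can degenerate into a deeper stratum of $W_i$; contact order zero with $D_j$ does \emph{not} prevent $f(p_i)\in D_j$). Fortunately your argument only needs $f(p_i)\in W_i\subseteq\mathcal V\subseteq\mathcal U$, which does hold, so the remainder goes through unchanged: $q\circ f$ is defined and logarithmic on a neighbourhood of the section, has contact order $\pi(v_i)=0$ there, and hence survives removal of the logarithmic structure along $p_i$.
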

	
	\begin{proof}
		Consider the composition $\mathsf M_\Uplambda(X')\to X'\to X$ of $\mathsf{ev}_i'$, viewed as a map to $X'$, with the blowup. We argued that the image of $\mathsf{ev}_i'$ lies in $W_i$. The composed map descends to the evaluation from $\mathsf M_\Uplambda(X)$ to $X$. However, the map $\mathsf M_\Uplambda(X')\to \mathsf M_\Uplambda(X)$ is proper and surjective, so the scheme theoretic image of $\mathsf M_\Uplambda(X)$ in $X$ is contained inside the image of $W_i$ inside $X$, as required.  
	\end{proof}
	
	\begin{remark}[Dimensionally stable evaluation spaces]
		By blowing up the target $X$, one can assume that the strata $W_i$ associated to the markings are either divisors or $X$. It is convenient to do this, as the evaluation space does not change dimension upon further blowup. There is no loss of generality, since evaluation constraints can always be pulled back to such a blowup. 
	\end{remark}

	
	\subsection{Evaluating to logarithmic tori} We construct evaluations for logarithmic maps to $\mathbb G_{\mathsf{log}}^r$. Each marking $p_i$ with nontrivial contact order determines a non-constant $1$-parameter subgroup by taking the span of the contact vector and tensoring the resulting map of lattices with $\mathbb G_{\mathsf{log}}$:
	\begin{equation*}
	\mathbb{G}_{\mathsf{log}} \hookrightarrow \mathbb{G}_{\mathsf{log}}^r.
	\end{equation*}
	We can therefore define:
	
	\begin{definition}[The evaluation space]
		The {\it rigid evaluation space} of $\mathsf{M}_{\Uplambda}(\mathbb{G}_{\mathsf{log}}^r)$ for $p_i$ is the quotient $\mathbb{G}_{\mathsf{log}}^r/\mathbb{G}_{\mathsf{log}}$ by the subtorus above. If $p_i$ has trivial contact the rigid evaluation space is $\mathbb{G}_{\mathsf{log}}^r$ itself. The {\it consolidated rigid evaluation space} $\mathsf{Ev}_{\Uplambda}(\mathbb{G}_{\mathsf{log}}^r)$ is the product of these spaces over all markings.
	\end{definition}
	
	We can define evaluation maps similarly. Let us first assume that we have marked point $p_i$ where the contact order is equal to $0$. Given a family of logarithmic curves $C$ over $S$, with a map to $\mathbb{G}_{\mathsf{log}}^r$, we may remove the logarithmic structure along the section corresponding to $p_i$. By composing the natural section $S\to C$ with the map to $\mathbb{G}_{\mathsf{log}}^r$ we obtain an $S$-valued point of $\mathbb{G}_{\mathsf{log}}^r$. Globally, we have:
	\[
	\mathsf{ev}_i:\mathsf M_\Lambda(\mathbb G_{\mathsf{log}}^r)\to \mathbb G_{\mathsf{log}}^r.
	\]
	
	In order to record some consistency between the evaluations for toric targets and for the logarithmic torus, and to provide some geometric intuition, let us give another view on these maps. We can use the fact that evaluation maps already exist for $\mathsf{M}_{\Uplambda}(X)$, where $X$ is a subdivision of $\mathbb G^r_{\mathsf{log}}$ and $\Lambda$ is the induced data. 
	
	Let us again first assume the contact order is $0$. Consider the following diagram
	\begin{equation*}
	\begin{tikzcd}
	\mathsf{M}_{\Uplambda}(\mathbb{G}_{\mathsf{log}}^r) \arrow[r, dashed]
	& \mathbb{G}_{\mathsf{log}}^r  \\
	\mathsf{M}_{\Uplambda}(X) \arrow[u] \arrow[r]
	&  X \arrow[u]
	\end{tikzcd}
	\end{equation*}
	The contact order is $0$, so we remove the logarithmic structure along the marked point $p_i$. The dashed arrow is filled in, as above, by composing the natural section from $\mathsf{M}_{\Uplambda}(\mathbb{G}_{\mathsf{log}}^r)$ to the universal curve, with the morphism to $\mathbb G_{\mathsf{log}}^r$. The resulting square commutes and gives the evaluation morphism:
	\begin{equation*}
	\mathsf{ev}_i\colon\mathsf{M}_{\Uplambda}(\mathbb{G}_{\mathsf{log}}^r)\to \mathbb{G}_{\mathsf{log}}^r.
	\end{equation*}
	Intuitively, it is obtained simply by ``blowing down'' both sides of evaluation map in the schematically representable case. 
	
	Now if the $i^{\mathrm{th}}$ contact order is nonzero we can use the projection trick from the previous section. Precisely, we have seen that the contact order of $p_i$ determines a $1$-parameter subgroup $\mathbb G_{\mathsf{log}}\hookrightarrow\mathbb G^r_{\mathsf{log}}$. By projecting a curve onto the quotient by this subgroup, the contact order at $p_i$ becomes $0$. By applying the previous construction we get an evaluation morphism
	\begin{equation*}
	\mathsf{ev}_i\colon\mathsf{M}_{\Uplambda}(\mathbb{G}_{\mathsf{log}}^r)\to \mathbb{G}_{\mathsf{log}}^r / \mathbb{G}_{\mathsf{log}}.
	\end{equation*}
	We will combine these to get a consolidated evaluation map:
	\[
	\mathsf{ev}\colon \mathsf{M}_{\Uplambda}(\mathbb{G}_{\mathsf{log}}^r)\to \mathsf{Ev}_\Uplambda(\mathbb G^r_{\mathsf{log}}).
	\]
	
	
	\subsection{Effectivity of the torus action} We introduce an assumption for the remainder of the paper; its helps to condense some diagrams that appear later. Consider the action of $\mathbb G_{\mathsf{log}}^r$ on $\mathsf{Ev}_\Uplambda(\mathbb G^r_{\mathsf{log}})$. It is induced by a group action of the lattice $N$ on the lattice $N'$ associated to $\mathsf{Ev}_\Uplambda(\mathbb G^r_{\mathsf{log}})$. Explicitly:
	\begin{equation*}
	N'= N / \mathbb{Z} w_1 \oplus \cdots \oplus N / \mathbb{Z} w_n
	\end{equation*}
	where $w_i$ is the primitive integral vector in the direction of the contact order $v_i$ for $i=1,\ldots, n$. And the action of $N$ on $N'$ is induced by the quotient group homomorphism $\phi: N \to N'$. The orbit of the origin (i.e the image of $\phi$) is a sublattice $L$ of $N'$. 
	
	\noindent
	{\bf Effectivity assumption.} {\it The lattice $L$ above is a torsor for $N$, i.e. the homomorphism $\phi$ has trivial kernel. Furthermore by replacing $N'$ with a finite index sublattice, assume that $N' / \phi(N)$ is free. }
	
	Passing to a finite index sublattice amounts to a root construction, and does not affect the Gromov--Witten theory~\cite{AW}. A typical case where the effectivity {\it fails} is when the target is $\mathbb P^1$, and all marked points have non-zero contact order. The evaluation space is a point, so effectivity fails. 
	
	\subsubsection{Justifying the effectivity}\label{rem:eff}
	Effectivity implies we have a split short exact sequence:
	\begin{equation*}
	0\to N\to N'\to N' / \phi(N) \to 0.
	\end{equation*}
	If we tensor this with $\mathbb{G}_m$ we get that the dense torus of $X$ injects into the dense torus of $\mathsf{Ev}_\Uplambda(X)$. Therefore, the action of the torus of $X$ on the torus of $\mathsf{Ev}_{\Uplambda}(X)$ is effective. We only ever care about the effective case, as if we had a toric variety where this action was not effective, its primary logarithmic Gromov--Witten cycles would vanish, as we now argue. Fix a cohomology class $\gamma$ on some subdivision $\mathsf{Ev}^\dagger_\Uplambda(X)$. The torus $N\otimes\mathbb G_m$ maps to $\mathsf{Ev}^\dagger_\Uplambda(X)$ via the homomorphism above. In the absence of effectivity, this map has positive dimensional fibers, and contains a copy of $\mathbb G_m$ that is contracted in $\mathsf{Ev}^\dagger_\Uplambda(X)$. Choose such a $\mathbb G_m$. Up to birational modifications, we can write $X$ as $Y\times\mathbb P^1$, where $\mathbb P^1$ is the compactification of this $\mathbb G_m$. The evaluation factors as
	\[
	\mathsf M_\Uplambda(Y\times\mathbb P^1)\to \mathsf M_\Uplambda(Y)\to \mathsf{Ev}^\dagger_\Uplambda(Y).
	\]
	We identify $\gamma$ with a cohomology class on a subdivision of $\mathsf{Ev}_\Uplambda(Y)$. The vanishing follows from the product formula~\cite{Herr,R19b}, together with the vanishing of the pushforward of $[\mathsf M_\Uplambda(\mathbb P^1)]^{\mathsf{vir}}$ to $\Mbar_{g,n}$.
	
	\subsection{The rubber evaluation space} The group $\mathbb G^r_{\mathsf{log}}$ acts on itself and on any group quotient of it. Therefore $\mathbb G^r_{\mathsf{log}}$ acts on the consolidated (rigid) evaluation space by the diagonal action on the factors of its presentation as a product.  
	
	\begin{definition}
		The {\it consolidated rubber evaluation space $\mathsf{Ev}^{\mathsf{rub}}_{\Uplambda}(\mathbb{G}_{\mathsf{log}}^r)$} is:
		\begin{equation*}
		\mathsf{Ev}^{\mathsf{rub}}_{\Uplambda}(\mathbb{G}_{\mathsf{log}}^r):= \mathsf{Ev}_{\Uplambda}(\mathbb{G}_{\mathsf{log}}^r)/\mathbb{G}_{\mathsf{log}}^r,
		\end{equation*}
		where the quotient is taken in the category of \textbf{AbGrp}-valued sheaves on the \'etale site of $\textbf{LogSch}$.
	\end{definition}
	
	By Proposition~\ref{prop:istorsor}, the rigid-to-rubber map on the mapping spaces is a $\mathbb{G}_{\mathsf{log}}^r$-torsor. The consolidated rigid evaluation map is equivariant for this group, so it descends to a rubber evaluation:
	\begin{equation*}
	\mathsf{ev}^{\mathsf{rub}}\colon\mathsf{R}_\Uplambda(\mathbb{G}_{\mathsf{log}}^r) \to \mathsf{Ev}^{\mathsf{rub}}_{\Uplambda}(\mathbb{G}_{\mathsf{log}}^r).
	\end{equation*}
	
	We now define an analogous rubber evaluation space $\mathsf{Ev}^{\mathsf{rub}}_{\Uplambda}(X)$ for $\mathsf{R}_{\Uplambda}(X)$ for representable targets $X\to \mathbb G_{\mathsf{log}}^r$. Fix such a toric variety $X$.
	\begin{definition}
		\label{def:Quotfan}
		We define a rubber evaluation space to be a representable smooth subdivision $Q$ of $\mathsf{Ev}^{\mathsf{rub}}_{\Uplambda}(\mathbb{G}_{\mathsf{log}}^r)$ such that there exists a smooth subdivision $B$ of $\mathsf{Ev}_{\Uplambda}(X)$ with a combinatorially flat morphism $b: B \to Q$ such that the following diagram commutes:
		\begin{equation*}
		\begin{tikzcd}
		B \arrow[r, ] \arrow[d,swap,"b"] 
		& \mathsf{Ev}_{\Uplambda}(\mathbb{G}_{\mathsf{log}}^r) \arrow[d,] \\
		Q \arrow[r,]
		&  \mathsf{Ev}_{\Uplambda}^{\mathsf{rub}}(\mathbb{G}_{\mathsf{log}}^r).
		\end{tikzcd}
		\end{equation*}
	\end{definition}
	\begin{remark}
		Rubber evaluation spaces always exist due to the following construction. Start with any representable subdivision $Q$ of $\mathsf{Ev}_{\Uplambda}^{\mathsf{rub}}(\mathbb{G}_{\mathsf{log}}^r)$. Then we consider $B:= Q \times_{\mathsf{Ev}_{\Uplambda}^{\mathsf{rub}}(\mathbb{G}_{\mathsf{log}}^r)} \mathsf{Ev}_{\Uplambda}(X)$ with its morphism $b\colon B\to Q$. Then by applying semistable reduction we can assume $b$ is combinatorially flat with smooth source and target.
	\end{remark}
	One should think of a rubber evaluation space as a quotient of the rigid evaluation space by a subtorus of its dense torus. The subtorus is given by the inclusion
	\[
	\mathbb{G}_m \otimes N \to \mathbb{G}_m \otimes N'
	\]
	The exact choice of rubber evaluation space is not relevant to our paper but an explicit construction can be obtained from the construction of~\cite{KSZ91}. Further combinatorial details about this construction can be found in work of Molcho~\cite{Mol16}.
	\begin{proposition}\label{prop:Glogcart}
		The following diagram is cartesian:
		\begin{equation*}
		\begin{tikzcd}
		\mathsf{M}_{\Uplambda}(\mathbb{G}_{\mathsf{log}}^r) \arrow[r, ] \arrow[d,"\varepsilon",swap] \arrow[rd, phantom, "\square"]
		& \mathsf{Ev}_{\Uplambda}(\mathbb{G}_{\mathsf{log}}^r) \arrow[d,] \\
		\mathsf{R}_\Uplambda(\mathbb{G}_{\mathsf{log}}^r) \arrow[r,]
		&  \mathsf{Ev}_{\Uplambda}^{\mathsf{rub}}(\mathbb{G}_{\mathsf{log}}^r).
		\end{tikzcd}
		\end{equation*}
	\end{proposition}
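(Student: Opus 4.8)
The plan is to prove the square is cartesian by recognizing it as a morphism of $\mathbb{G}_{\mathsf{log}}^r$-torsors. Concretely, I will exhibit both vertical arrows as $\mathbb{G}_{\mathsf{log}}^r$-torsors and the top horizontal arrow as $\mathbb{G}_{\mathsf{log}}^r$-equivariant, and then invoke the elementary fact that a commutative square of this shape is automatically cartesian.

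First I would record that the left vertical map $\varepsilon$ is a $\mathbb{G}_{\mathsf{log}}^r$-torsor: this is exactly the Corollary to Proposition~\ref{prop:istorsor}. Next I would check the right vertical map. Recall $\mathsf{Ev}_{\Uplambda}(\mathbb{G}_{\mathsf{log}}^r)=N'\otimes\mathbb{G}_{\mathsf{log}}$ and that $\mathsf{Ev}_{\Uplambda}^{\mathsf{rub}}(\mathbb{G}_{\mathsf{log}}^r)$ is its quotient, in \textbf{AbGrp}-valued sheaves on the strict \'etale site, by the image of $\mathbb{G}_{\mathsf{log}}^r=N\otimes\mathbb{G}_{\mathsf{log}}$ under $\phi\otimes\mathbb{G}_{\mathsf{log}}$, where $\phi\colon N\to N'$ is the homomorphism from the effectivity assumption. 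That assumption provides a split short exact sequence $0\to N\to N'\to N'/\phi(N)\to 0$ of lattices; applying the additive functor $-\otimes\mathbb{G}_{\mathsf{log}}$ yields a split short exact sequence of group sheaves
\[
0\to \mathbb{G}_{\mathsf{log}}^r\to \mathsf{Ev}_{\Uplambda}(\mathbb{G}_{\mathsf{log}}^r)\to \mathsf{Ev}_{\Uplambda}^{\mathsf{rub}}(\mathbb{G}_{\mathsf{log}}^r)\to 0.
\]
Translation by $\mathbb{G}_{\mathsf{log}}^r$ therefore acts simply transitively on the nonempty fibres of the right vertical map, and the splitting supplies sections, so it is a $\mathbb{G}_{\mathsf{log}}^r$-torsor in the sense of Definition~\ref{def: associated-torsor}.

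The top horizontal map is the consolidated rigid evaluation $\mathsf{ev}$, which is $\mathbb{G}_{\mathsf{log}}^r$-equivariant: this is precisely the statement, used to construct $\mathsf{ev}^{\mathsf{rub}}$, that $\mathsf{ev}$ descends along the two quotients to $\mathsf{ev}^{\mathsf{rub}}$. With all three maps in hand, I would form the canonical morphism
\[
\mathsf{M}_{\Uplambda}(\mathbb{G}_{\mathsf{log}}^r)\longrightarrow \mathsf{R}_{\Uplambda}(\mathbb{G}_{\mathsf{log}}^r)\times_{\mathsf{Ev}_{\Uplambda}^{\mathsf{rub}}(\mathbb{G}_{\mathsf{log}}^r)}\mathsf{Ev}_{\Uplambda}(\mathbb{G}_{\mathsf{log}}^r)
\]
induced by $\varepsilon$, $\mathsf{ev}$ and the commutativity of the square. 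The target is the base change of the $\mathbb{G}_{\mathsf{log}}^r$-torsor $\mathsf{Ev}_{\Uplambda}(\mathbb{G}_{\mathsf{log}}^r)\to\mathsf{Ev}_{\Uplambda}^{\mathsf{rub}}(\mathbb{G}_{\mathsf{log}}^r)$ along $\mathsf{ev}^{\mathsf{rub}}$, hence a $\mathbb{G}_{\mathsf{log}}^r$-torsor over $\mathsf{R}_{\Uplambda}(\mathbb{G}_{\mathsf{log}}^r)$, and the displayed morphism is $\mathbb{G}_{\mathsf{log}}^r$-equivariant over that base because $\mathsf{ev}$ is. Since a $\mathbb{G}_{\mathsf{log}}^r$-equivariant morphism between two $\mathbb{G}_{\mathsf{log}}^r$-torsors over a common base is an isomorphism — \'etale-locally each torsor is trivial, and an equivariant self-map of the (abelian) group viewed as a torsor over itself is a translation, hence bijective, and being an isomorphism is local — the displayed morphism is an isomorphism; that is, the square is cartesian. (All of this is tested on log-scheme-valued points of $\mathsf{R}_{\Uplambda}(\mathbb{G}_{\mathsf{log}}^r)$, so the non-representability of the objects is not an obstruction.)

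The only step requiring genuine care is the torsor structure on the right vertical map, and this is where the standing effectivity assumption is essential: without it $\phi$ would have a kernel, $\mathbb{G}_{\mathsf{log}}^r$ would fail to act freely on $\mathsf{Ev}_{\Uplambda}(\mathbb{G}_{\mathsf{log}}^r)$, and $\mathsf{Ev}_{\Uplambda}(\mathbb{G}_{\mathsf{log}}^r)\to\mathsf{Ev}_{\Uplambda}^{\mathsf{rub}}(\mathbb{G}_{\mathsf{log}}^r)$ would be merely an epimorphism of sheaves rather than a $\mathbb{G}_{\mathsf{log}}^r$-torsor, and the square would not be cartesian. Everything else is formal torsor yoga in the category of \textbf{AbGrp}-valued sheaves on the strict \'etale site of \textbf{LogSch}.
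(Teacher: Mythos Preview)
Your proof is correct and follows essentially the same idea as the paper's: both arguments recognise the square as a $\mathbb{G}_{\mathsf{log}}^r$-equivariant map between $\mathbb{G}_{\mathsf{log}}^r$-torsors over $\mathsf{R}_\Uplambda(\mathbb{G}_{\mathsf{log}}^r)$. The paper carries this out by base-changing along a test map $S\to \mathsf{R}_\Uplambda(\mathbb{G}_{\mathsf{log}}^r)$, trivialising the torsor $\varepsilon$ \'etale-locally to $U\times\mathbb{G}_{\mathsf{log}}^r$, and declaring the resulting local square ``clearly cartesian'', whereas you invoke the abstract principle that an equivariant morphism of torsors over a common base is an isomorphism. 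Your version has the merit of making explicit where the effectivity assumption enters (namely, to ensure the right vertical is a torsor rather than merely a quotient map), a point the paper's ``clearly cartesian'' step uses but does not flag.
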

	\begin{proof}
		We show that for any base change of $\varepsilon$ via a map $S\to \mathsf{Ev}_\Uplambda^{\mathsf{rub}}(\mathbb G_{\mathsf{log}}^r)$ from a logarithmic scheme, the resulting outer square is cartesian:
		\begin{equation}
		\label{eq:fiberS}
		\begin{tikzcd}
		S \times_{\mathsf{R}_\Uplambda(\mathbb{G}_{\mathsf{log}}^r)}\mathsf{M}_{\Uplambda}(\mathbb{G}_{\mathsf{log}}^r) \arrow[r, ] \arrow[d,swap,"\gamma"]
		& \mathsf{Ev}_{\Uplambda}(\mathbb{G}_{\mathsf{log}}^r) \arrow[d,] \\
		S \arrow[r,]
		&  \mathsf{Ev}_{\Uplambda}^{\mathsf{rub}}(\mathbb{G}_{\mathsf{log}}^r).
		\end{tikzcd}
		\end{equation}
		Since $\gamma$ is $\mathbb{G}_{\mathsf{log}}^{n}$-torsor we have a cover of S by \'etale open sets $U \to S$ such that
		\begin{equation*}
		U \times_{\mathsf{R}_\Uplambda(\mathbb{G}_{\mathsf{log}}^r)}\mathsf{M}_{\Uplambda}(\mathbb{G}_{\mathsf{log}}^r)= U \times \mathbb{G}_{\mathsf{log}}^r,
		\end{equation*}
		and the restriction of $\gamma$ to $U \times \mathbb{G}_{\mathsf{log}}^r$  is the projection on to the first factor. The resulting diagram is 
		\begin{equation*}
		\begin{tikzcd}
		U\times \mathbb{G}_{\mathsf{log}}^r \arrow[r, ] \arrow[d,swap,"\gamma"]
		& \mathsf{Ev}_{\Uplambda}(\mathbb{G}_{\mathsf{log}}^r) \arrow[d,] \\
		U \arrow[r,]
		&  \mathsf{Ev}_{\Uplambda}^{\mathsf{rub}}(\mathbb{G}_{\mathsf{log}}^r),
		\end{tikzcd}
		\end{equation*}
		which is clearly cartesian. By gluing we get that the diagram in equation \eqref{eq:fiberS} is cartesian.
	\end{proof}
	We now lift this statement from $\mathbb{G}_{\mathsf{log}}^r$ to the toric variety $X$. 
	\begin{proposition}
		\label{prop:toriccart}
		There exists a cartesian square of fine and saturated logarithmic schemes:
		\begin{equation*}
		\begin{tikzcd}[row sep=scriptsize, column sep=scriptsize]
		X_1 \arrow[rr, crossing over] \arrow[dd] \arrow[rrdd, phantom, "\square"] & & X_3 \arrow[dd] \\
		& & &  \\
		X_2 \arrow[rr] & & X_4 \\
		\end{tikzcd}
		\end{equation*}
		where $X_1, X_2, X_3, X_4$ are representable subdivisions of the $4$ spaces appearing in Proposition~\ref{prop:Glogcart} such that the following diagram commutes:
		\begin{equation*}
		\label{eq:cubediag}
		\begin{tikzcd}[row sep=scriptsize, column sep=scriptsize]
		& X_1 \arrow[dl] \arrow[rr] \arrow[dd] & & X_3 \arrow[dl] \arrow[dd] \\
		\mathsf{M}_{\Uplambda}(\mathbb{G}_{\mathsf{log}}^r) \arrow[rr, crossing over] \arrow[dd] & & \mathsf{Ev}_{\Uplambda}(\mathbb{G}_{\mathsf{log}}^r) \\
		& X_2 \arrow[dl] \arrow[rr] & & X_4 \arrow[dl] \\
		\mathsf{R}_\Uplambda(\mathbb{G}_{\mathsf{log}}^r)  \arrow[rr] & & \mathsf{Ev}_{\Uplambda}^{\mathsf{rub}}(\mathbb{G}_{\mathsf{log}}^r) \arrow[from=uu, crossing over]\\
		\end{tikzcd}
		\end{equation*}
		Furthermore we require $X_3$ to be a subdivision of $\mathsf{Ev}_{\Uplambda}(X)$ with the morphism $X_3\to X_4$ being flat. The properties can all be ensured while also demanding $X_3$ and $X_4$ are smooth. 
	\end{proposition}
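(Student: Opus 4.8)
The plan is to construct the entire cube from a single fibre-square, reaching the non-representable corners via Proposition~\ref{prop:Glogcart} and extracting smoothness and flatness from the existence of rubber evaluation spaces. First I would fix the rubber data: choose $X_4$ to be a representable smooth subdivision of $\mathsf{Ev}_{\Uplambda}^{\mathsf{rub}}(\mathbb{G}_{\mathsf{log}}^r)$ together with a smooth subdivision $X_3$ of $\mathsf{Ev}_{\Uplambda}(X)$ and a combinatorially flat morphism $b\colon X_3\to X_4$ lying over $\mathsf{Ev}_{\Uplambda}(\mathbb{G}_{\mathsf{log}}^r)\to\mathsf{Ev}_{\Uplambda}^{\mathsf{rub}}(\mathbb{G}_{\mathsf{log}}^r)$ --- that is, a smooth rubber evaluation space in the sense of Definition~\ref{def:Quotfan}, which exists by the remark following that definition (take $X_3:=X_4\times_{\mathsf{Ev}_{\Uplambda}^{\mathsf{rub}}(\mathbb{G}_{\mathsf{log}}^r)}\mathsf{Ev}_{\Uplambda}(X)$ and apply semistable reduction). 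I would then record two observations. Since $\mathsf{Ev}_{\Uplambda}(X)$ is a complete toric variety with cocharacter lattice $N'$, it is a subdivision of $\mathsf{Ev}_{\Uplambda}(\mathbb{G}_{\mathsf{log}}^r)=N'\otimes\mathbb{G}_{\mathsf{log}}$, so $X_3$ is a subdivision of $\mathsf{Ev}_{\Uplambda}(\mathbb{G}_{\mathsf{log}}^r)$ as well; and a combinatorially flat toroidal morphism of smooth toroidal Deligne--Mumford stacks is flat, so $b$ is flat, as required of $X_3\to X_4$.

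Next I would obtain the remaining two corners by base change, all fibre products being taken in fine and saturated logarithmic stacks:
\[
X_1:=\mathsf{M}_{\Uplambda}(\mathbb{G}_{\mathsf{log}}^r)\times_{\mathsf{Ev}_{\Uplambda}(\mathbb{G}_{\mathsf{log}}^r)}X_3,\qquad X_2:=\mathsf{R}_{\Uplambda}(\mathbb{G}_{\mathsf{log}}^r)\times_{\mathsf{Ev}_{\Uplambda}^{\mathsf{rub}}(\mathbb{G}_{\mathsf{log}}^r)}X_4.
\]
Because $X_3\to\mathsf{Ev}_{\Uplambda}(\mathbb{G}_{\mathsf{log}}^r)$ and $X_4\to\mathsf{Ev}_{\Uplambda}^{\mathsf{rub}}(\mathbb{G}_{\mathsf{log}}^r)$ are subdivisions, and subdivisions are stable under fs base change and are representable by Deligne--Mumford stacks, the projections $X_1\to\mathsf{M}_{\Uplambda}(\mathbb{G}_{\mathsf{log}}^r)$ and $X_2\to\mathsf{R}_{\Uplambda}(\mathbb{G}_{\mathsf{log}}^r)$ are subdivisions; in particular $X_2$ is representable by a Deligne--Mumford stack since $\mathsf{R}_{\Uplambda}(\mathbb{G}_{\mathsf{log}}^r)$ is one (Section~\ref{rem:stackwlog}). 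The universal properties of these fibre products then supply all edges of the cube and the commutativity of every face containing a vertical (``subdivision'') edge; commutativity of the front face's left edge with $\mathsf{Ev}_{\Uplambda}(\mathbb{G}_{\mathsf{log}}^r)\to\mathsf{Ev}_{\Uplambda}^{\mathsf{rub}}(\mathbb{G}_{\mathsf{log}}^r)$ is part of the datum of Definition~\ref{def:Quotfan}.

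It remains to check that the front square $X_1\to X_3,\ X_1\to X_2,\ X_3\to X_4,\ X_2\to X_4$ is cartesian and that $X_1$ is representable; this is the one place where the input Proposition~\ref{prop:Glogcart} is used essentially. Using $\mathsf{M}_{\Uplambda}(\mathbb{G}_{\mathsf{log}}^r)=\mathsf{R}_{\Uplambda}(\mathbb{G}_{\mathsf{log}}^r)\times_{\mathsf{Ev}_{\Uplambda}^{\mathsf{rub}}(\mathbb{G}_{\mathsf{log}}^r)}\mathsf{Ev}_{\Uplambda}(\mathbb{G}_{\mathsf{log}}^r)$ and associativity of fibre products with the factorisation $X_3\to X_4\to\mathsf{Ev}_{\Uplambda}^{\mathsf{rub}}(\mathbb{G}_{\mathsf{log}}^r)$,
\[
X_1=\mathsf{R}_{\Uplambda}(\mathbb{G}_{\mathsf{log}}^r)\times_{\mathsf{Ev}_{\Uplambda}^{\mathsf{rub}}(\mathbb{G}_{\mathsf{log}}^r)}X_3=\Bigl(\mathsf{R}_{\Uplambda}(\mathbb{G}_{\mathsf{log}}^r)\times_{\mathsf{Ev}_{\Uplambda}^{\mathsf{rub}}(\mathbb{G}_{\mathsf{log}}^r)}X_4\Bigr)\times_{X_4}X_3=X_2\times_{X_4}X_3.
\]
Hence the front square is cartesian, and $X_1$, being a fibre product of the representable stacks $X_2,X_3,X_4$, is representable. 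The smoothness of $X_3,X_4$ and the flatness of $X_3\to X_4$ were secured in the first step, and $X_3$ subdivides $\mathsf{Ev}_{\Uplambda}(X)$ by construction, so this would complete the proof.

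The construction is formal once Proposition~\ref{prop:Glogcart} and the existence of smooth rubber evaluation spaces are in hand; the hard part is entirely bookkeeping in the schematically non-representable category. The two points I would be most careful about are: (i) that ``subdivision'' and ``representability by Deligne--Mumford stacks'' really are preserved under fs base change along the non-representable evaluation morphisms $\mathsf{M}_{\Uplambda}(\mathbb{G}_{\mathsf{log}}^r)\to\mathsf{Ev}_{\Uplambda}(\mathbb{G}_{\mathsf{log}}^r)$ and $\mathsf{R}_{\Uplambda}(\mathbb{G}_{\mathsf{log}}^r)\to\mathsf{Ev}_{\Uplambda}^{\mathsf{rub}}(\mathbb{G}_{\mathsf{log}}^r)$, which follows from the definition of a subdivision as an fs-local strict base change of a subdivision of a logarithmically smooth target; and (ii) that the identifications above are compatible with the downward edges of the cube, so that one genuinely obtains the claimed commuting cube --- e.g.\ that $X_1\to\mathsf{M}_{\Uplambda}(\mathbb{G}_{\mathsf{log}}^r)\to\mathsf{R}_{\Uplambda}(\mathbb{G}_{\mathsf{log}}^r)$ agrees with $X_1=X_2\times_{X_4}X_3\to X_2\to\mathsf{R}_{\Uplambda}(\mathbb{G}_{\mathsf{log}}^r)$ --- which again reduces to Propositions~\ref{prop:Glogcart} and~\ref{prop:istorsor}.
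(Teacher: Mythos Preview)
Your proposal is correct and follows essentially the same approach as the paper: choose $X_4$ as a rubber evaluation space with its companion $X_3$, define $X_1$ and $X_2$ by the identical fibre products, and then use Proposition~\ref{prop:Glogcart} together with associativity of fibre products to deduce that the front square is cartesian and that $X_1$ is representable. The paper phrases the cartesian-square step as a ``composition of cartesian squares'' diagram chase rather than your direct chain of identifications $X_1=X_2\times_{X_4}X_3$, but the content is the same.
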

	
	\begin{proof}
		We define $X_4$ to be a rubber evaluation space as in Definition~\ref{def:Quotfan}. Therefore we have $X_3$ a subdivision of $\mathsf{Ev}_{\Uplambda}(X)$ with a flat morphism to $X_4$. Both $X_3$ and $X_4$ are smooth. Now define $X_2$ as:
		\begin{equation*}
		X_2 := \mathsf{R}_\Uplambda(\mathbb{G}_{\mathsf{log}}^r) \times_{\mathsf{Ev}_{\Uplambda}^{\mathsf{rub}}(\mathbb{G}_{\mathsf{log}}^r)} X_4. 
		\end{equation*}
		Similarly for $X_1$:
		\begin{equation*}
		X_1:= \mathsf{M}_{\Uplambda}(\mathbb{G}_{\mathsf{log}}^r) \times_{\mathsf{Ev}_{\Uplambda}(\mathbb{G}_{\mathsf{log}}^r)} X_3.
		\end{equation*}
		We get a map from $X_1\to X_2$ which gives a commutative cube diagram as in \eqref{eq:cubediag}. Since both the square in Proposition~\ref{prop:Glogcart} and the square involving $\mathsf{M}_{\Uplambda}(\mathbb{G}_{\mathsf{log}}^r)$, $\mathsf{Ev}_{\Uplambda}(\mathbb{G}_{\mathsf{log}}^r)$, $X_1$ and $X_3$ are cartesian, their composition is cartesian. This implies the outer rectangle in
		\begin{equation*}
		\begin{tikzcd}[row sep=scriptsize, column sep=scriptsize]
		X_1 \arrow[rr, crossing over] \arrow[dd] & & X_2 \arrow[rr] \arrow[dd] \arrow[rrdd, phantom, "\square"] & & \mathsf{R}_\Uplambda(\mathbb{G}_{\mathsf{log}}^r) \arrow[dd]\\
		& & &  \\
		X_3 \arrow[rr] & & X_4 \arrow[rr] & & \mathsf{Ev}_{\Uplambda}^{\mathsf{rub}}(\mathbb{G}_{\mathsf{log}}^r) \\
		\end{tikzcd}
		\end{equation*}
		is cartesian, so the left square is too. The representability of $X_2$ comes from being a subdivision of a space that is representable by a stack (c.f. Section~\ref{rem:stackwlog}). As a fiber product of representable spaces we get that $X_1$ is representable. 
	\end{proof}
	
	\section{Logarithmic GW {\it \&} DR cycles}\label{sec: main-theorems}
	
	We continue to let $X$ be a toric variety and let $\mathsf A_X$ be its Artin fan. Logarithmic Gromov--Witten theory produces logarithmic mapping stacks to $X$ and to $\mathsf A_X$ respectively~\cite{AC11,Che10,GS13}. The stack of maps to $X$ is always strict over the stack of maps to $\mathsf A_X$. We consider {\it subdivisions} of these mapping spaces. But we will only ever consider subdivisions that arise via strict maps:
	\[
	\mathsf M^\dagger_\Uplambda(X)\to \mathsf M^\dagger_\Uplambda(\mathsf A_X),
	\]
	where $\mathsf M^\dagger_\Uplambda(\mathsf A_X)$ is a subdivision of the usual logarithmic mapping space, with the stability condition of Section~\ref{sec: maps-to-tvs}. The space $\mathsf M^\dagger_\Uplambda(X)$ is the pullback of $\mathsf M_\Uplambda(X)$ along this subdivision. Each $\mathsf M^\dagger_\Uplambda(X)$ has a virtual class in Chow homology, compatible under pushforward along subdivisions~\cite[Section~3]{R19}. 
	
	We consider the analogous class of subdivisions of rubber mapping spaces. Recall the space of {rubber} stable maps to the logarithmic torus $\mathbb G_{\mathsf{log}}^r$ from Section~\ref{sec: maps-to-tvs}. As in that section, this space has a map to the space of rubber maps to $\mathbb G_{\mathsf{trop}}^r$. The latter is equipped with a natural stability condition. As above we will consider subdivisions of both spaces, but will always demand a strict map:
	\[
	\mathsf{R}^\dagger_{\Uplambda}(\mathbb G_{\mathsf{log}}^r)\to \mathsf{R}^\dagger_\Uplambda(\mathbb G_{\mathsf{trop}}^r).
	\]
	The subdivisions $\mathsf{R}^\dagger_{\Uplambda}(\mathbb G_{\mathsf{log}}^r)$ carry virtual classes, given by the Fulton--Macpherson intersection class from its presentation as an intersection of regular embeddings in a product of Picard varieties over an open in a subdivision of $\Mbar_{g,n}$, see~\cite[Section~4]{MR21}. 
	
	And finally, we will also consider the analogous class of subdivisions for maps to $\mathbb G^r_{\sf log}$ and $\mathbb G_{\trop}^r$. 
	
	\subsection{Conventions on notation and the minimal models} In order to avoid an overabundance of decorations, we adopt the following notation. We use subdivision-insensitive notation from here forward, and explicitly note when spaces have to be replaced by subdivisions, rather than decorate the spaces themselves; decorations are used only when two subdivisions need to be compared. 
	
	So from here forward, the notation $\mathsf M_\Uplambda(X)$ stands for a subdivision of the usual moduli space of logarithmic stable maps. Note that subdivisions of the rubber variant are denoted $\mathsf{R}_\Uplambda(\mathbb G_{\mathsf{log}}^r)$ rather than $\mathsf{R}_\Uplambda(X)$. There is no meaningful space of rubber maps to a general $X$, so it is not really appropriate to use the symbol ``$X$'' here. The symbol $\mathsf M$ will be replaced by $\mathfrak M$ for prestable variants, where the stability condition is dropped. The moduli space of prestable logarithmic curves is denoted $\mathfrak M_{g,n}^{\mathsf{log}}$. 
	
	Given a toric variety $X$, among the subdivisions of the mapping space we consider, there is a trivial subdivision, namely, the usual space of maps to $X$ from~\cite{AC11,Che10,GS13}. We denote this $\mathsf{M}_\Uplambda^{\mathsf{min}}(X)$. The superscript stands for \textit{minimal model}. Similarly, there is a distinguished functor $\mathsf{M}_\Uplambda^{\mathsf{min}}(\mathbb G_{\mathsf{log}}^r)$ on logarithmic schemes. It is typically schematically non-representable. We use the superscript similarly for maps to Artin fans $\mathsf A_X$, to the tropical torus $\mathbb G_{\mathsf{trop}}^r$, and for the spaces of rubber maps to $\mathbb G_{\mathsf{log}}^r$ and $\mathbb G_{\mathsf{trop}}^r$. 
	
	
	\subsection{Logarithmic Gromov--Witten cycles}\label{sec: log-cycle} We have a forgetful morphism $\varphi\colon \mathsf M_\Uplambda(X)\to \Mbar_{g,n}$. Given a class $\gamma$ in the Chow ring of $\mathsf{Ev}_\Uplambda(X)$, we produce a class in the ordinary Chow ring $\Mbar_{g,n}$ by the pull/push formula:
	\[
	\varphi_\star\left(\mathsf{ev}^\star(\gamma)\cap [\mathsf M_\Uplambda(X)]^{\mathsf{vir}} \right).
	\]
	We explain how to lift this to logarithmic Chow. Recall the ring is the colimit
	\[
	\mathsf{logCH}^\star(\Mbar_{g,n}) \colonequals \varinjlim_{\Mbar_{g,n}^\dagger\to\Mbar_{g,n}} \mathsf{CH}^\star(\Mbar_{g,n}^\dagger),
	\]
	taken over smooth subdivisions, with transitions given by pullback. Given $\Mbar_{g,n}^\dagger\to\Mbar_{g,n}$ a subdivision with smooth domain, we can perform a fine and saturated logarithmic base change along $\varphi$ to produce a map from a subdivision of $\mathsf M_\Uplambda(X)$ to $\Mbar_{g,n}^\dagger$. The virtual class produces an element in the Chow ring of this blowup. 
	
	In Chow cohomology, pullback along a blowup sequence with smooth centers is split injective, so we have an inclusion
	\[
	\mathsf{logCH}^\star(\Mbar_{g,n}) := \varinjlim \mathsf{CH}^\star(\Mbar_{g,n}^\dagger)\subset \varprojlim \mathsf{CH}^\star(\Mbar_{g,n}^\dagger):=\mathsf{logCH}_\star(\Mbar_{g,n}).
	\]
	The colimit is recognized inside the limit as those systems of compatible classes that are eventually related by pullback. We show the logarithmic Gromov--Witten class above is such a class.
	
	Any subdivision of $\mathsf M_\Uplambda(X)$ that maps to the fine and saturated pullback
	\[
	\Mbar_{g,n}^\dagger\times_{\Mbar_{g,n}}\mathsf M_\Uplambda(X)
	\] 
	also maps to $\Mbar_{g,n}^\dagger$. Note that this pullback is strict over the analogous fiber product with $X$ replaced by $\mathsf A_X$, so it falls under the class of subdivisions we are considering. As noted earlier, these subdivisions carry virtual classes~\cite[Section~3]{R19}. It is proved in loc. cit. that the classes are related by proper pushforward. The evaluation class $\mathsf{ev}^\star(\gamma)$ can be pulled back along further subdivisions. If we cap with the virtual class of each one of these subdivisions and push forward, we get a class on every smooth subdivision of $\Mbar_{g,n}$. This defines an element of the inverse limit above by pushforward compatibility. It is called the {\it the logarithmic Gromov--Witten cycle.}
	
	\noindent
	{\bf Assertion.} {\it The logarithmic Gromov--Witten cycle in $\varprojlim \mathsf{CH}^\star(\Mbar_{g,n}^\dagger)$ from the construction above is contained in the colimit of Chow rings, i.e. in $\mathsf{logCH}^\star(\Mbar_{g,n})$.}
	
	In their forthcoming survey, Herr--Molcho--Pandharipande--Wise explain why the system of classes on the $\Mbar_{g,n}^\dagger$ spaces are eventually compatible under {\it cohomological pullback} and therefore defines an element in the logarithmic Chow ring~\cite{HMPW}. Since the survey has not appeared, we say a word about why the result is true; another version appears in~\cite[Section~4]{MR21}. 
	
	By toroidal semistable reduction, we can ensure that the forgetful map factorizes as
	\[
	\mathsf M_\Uplambda(X)^\dagger\to \mathsf M_\Uplambda(\mathsf A_X)^\dagger\to \Mbar_{g,n}^\dagger
	\]
	such that the second map is flat; the first map is strict and virtually smooth~\cite[Section~3]{R19}. For a further smooth subdivision $\Mbar_{g,n}^\ddagger\to \Mbar_{g,n}^\dagger$, this flatness ensures the fine and saturated fiber product
	\[
	\begin{tikzcd}
	\mathsf M_\Uplambda(\mathsf A_X)^\ddagger\arrow{d}\arrow{r} \arrow[rd, phantom, "\square"]&  \mathsf M_\Uplambda(\mathsf A_X)^\dagger\arrow{d} \\
	\Mbar_{g,n}^\ddagger \arrow{r} & \Mbar_{g,n}^\dagger
	\end{tikzcd}
	\]
	is a fiber product in ordinary stacks. The refined pullback for the horizontal arrows therefore coincide. By a diagram chase, compatibility of refined pullback and virtual pullback ensures the virtual classes are related by Gysin pullback along $\Mbar_{g,n}^\ddagger\to \Mbar_{g,n}^\dagger$. The assertion is a consequence. 
	
	\subsection{Comparing virtual structures} We compare the virtual classes of $\mathsf{M}_{\Uplambda}(X)$ and $\mathsf{R}_{\Uplambda}(\mathbb G_{\mathsf{log}}^r)$. Morally, the morphism from rigid to rubber is logarithmically smooth, so classes should be related by pullback, at least once subdivisions have been made to make the morphism flat. This is the content of Theorem~\ref{thm: virtpull}, whose statement and proof are our next tasks. We begin by recalling the obstruction theory in each case. 
	
	\subsubsection{The rigid geometry}We have a strict morphism of subdivisions of logarithmic mapping stacks:
	\[
	\mathsf{M}_{\Uplambda}(X)\to \mathsf M_\Uplambda(\mathsf A_X),
	\]
	where in the latter mapping stack we have imposed the stability condition of Section~\ref{sec: maps-to-tori}. 
	
	Abramovich--Wise equip the map above with a relative perfect obstruction theory. If $\pi\colon C\to \mathsf{M}_{\Uplambda}(X)$ is the universal curve and $f\colon C\to X$ is the universal map, the domain of the obstruction theory is given by $(R\pi_\star f^\star T^{\mathsf{log}}_X)^\vee$, see~\cite[Section~6]{AW}. The vector bundle $f^\star T^{\mathsf{log}}_X$ is isomorphic to $\mathcal O_C^{\oplus r}$, since the logarithmic tangent bundle of $X$ is trivial. Note that the substack $\mathsf M_\Uplambda(\mathsf A_X)\subset \mathfrak M_\Uplambda(\mathsf A_X)$ from Section~\ref{sec: maps-to-tvs} is open, so either can be used as the base of this obstruction theory. 
	
	\subsubsection{A variant of the rigid geometry}\label{sec: rigid-variant}  We have noted that $\mathsf M_\Uplambda(\mathbb G_{\mathsf{log}}^r)$ is typically not schematically representable. One way to produce representable subdivisions is to consider the $\mathsf{M}_{\Uplambda}(X)$ for a toric variety $X$. We will come across a more general source of representable subdivisions. If $B\to \mathsf M_\Uplambda(\mathbb G_{\mathsf{trop}}^r)$ is any subdivision by an algebraic stack with logarithmic structure, the base change along $\mathsf M_\Uplambda(\mathbb G_{\mathsf{log}}^r)\to \mathsf M_\Uplambda(\mathbb G_{\mathsf{trop}}^r)$ yields
	\[
	d\colon \mathsf M\to B.
	\]
	By Proposition~\ref{prop: strictness} this is schematically representable and strict. The stack $B$ need not be a subdivision of $\mathsf M^{\sf min}_\Uplambda(\mathsf A_X)$, so the spaces $\mathsf M$ are more general than the mapping stacks $\mathsf{M}_{\Uplambda}(X)$ above.
	
	Nevertheless, the morphism $d$ is equipped with a canonical perfect obstruction theory, with obstruction bundle $(R\pi_\star \mathcal O_C^{\oplus r})^\vee$. Roughly, this is because the map from $\mathsf M_\Uplambda(\mathbb G^r_{\mathsf{log}})\to \mathsf M_\Uplambda(\mathbb G^r_{\mathsf{trop}})$ is representable with obstruction theory given by the standard pull/push construction associated to the logarithmic tangent bundle of $\mathbb G_{\mathsf{log}}^r$, which is trivial. 
	
	More formally, one can copy the analysis of~\cite[Section~6]{AW}.  We provide a sketch. Consider the lifting problem associated to $d$. Let $S\to \mathsf M$ be a map of logarithmic schemes. Let $S\subset S'$ be a strict square zero extension with ideal $J$, and $S'\to B$ a compatible map:
	\[ 
	\vcenter{\xymatrix{
			& & \mathbb G_{\mathsf{log}}^r \ar[d]  \\
			C \ar[r] \ar@/^15pt/[urr]^f \ar[d] & C' \ar[r] \ar@{-->}[ur] \ar[d] & \mathbb G_{\mathsf{trop}}^r \\
			S \ar[r] & S'. &
	}} 
	\]
	Lifts in this diagram are controlled by the relative tangent bundle of $\mathbb G_{\mathsf{log}}^r\to \mathbb G_{\mathsf{trop}}^r$; note that although source and target are not schematically representable, the morphism is the stack quotient by $\mathbb G_m^r$, so it makes sense to speak of its relative tangent bundle (or at least the pullback to a scheme) and it is a rank $r$ trivial vector bundle.
	
	It follows that lifts of the diagram form a torsor on $C$ under the sheaf of abelian groups $\mathcal O_C^{\oplus r}\otimes J$, and we obtain an obstruction theory in the sense of Wise~\cite{Wis11}. The arguments of~\cite[Section~3]{R19} and of~\cite{AW} carry over verbatim to show the following compatibility. 
	
	\begin{lemma}\label{lem: extra-compatibility}
		Let $\mathsf M_\Uplambda(X)$ be (a subdivision of) the moduli space of logarithmic stable maps to $X$, and assume the morphism to $\mathsf M_\Uplambda(\mathbb G_{\mathsf{log}}^r)$ factors through $\mathsf M$ to give
		\[
		h\colon \mathsf M_\Uplambda(X)\to \mathsf M.
		\]
		The virtual classes of $\mathsf M_\Uplambda(X)$ and $\mathsf M$ are identified by pushforward along $h$. 
	\end{lemma}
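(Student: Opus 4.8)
The plan is to exhibit both virtual classes as virtual pullbacks of fundamental classes along strict morphisms carrying the \emph{same} perfect obstruction theory, fitted into a cartesian square whose lower edge is a subdivision, and then to quote the compatibility of virtual pullback with proper pushforward along the base of such a square.

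First I would set up the geometry. By the ``geometric'' incarnation of Proposition~\ref{prop:rigidmod-trop} one has $\mathsf M_\Uplambda(X)=\mathsf M_\Uplambda(\mathsf A_X)\times_{\mathsf M_\Uplambda(\mathbb G_{\mathsf{trop}}^r)}\mathsf M_\Uplambda(\mathbb G_{\mathsf{log}}^r)$, while $\mathsf M$ is by construction $B\times_{\mathsf M_\Uplambda(\mathbb G_{\mathsf{trop}}^r)}\mathsf M_\Uplambda(\mathbb G_{\mathsf{log}}^r)$ for an algebraic stack $B$ subdividing $\mathsf M_\Uplambda(\mathbb G_{\mathsf{trop}}^r)$. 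Since we are free to pass to finer subdivisions of the mapping spaces — under which the virtual classes transform compatibly by \cite[Section~3]{R19}, as do the morphisms in sight — I would first refine $\mathsf M_\Uplambda(\mathsf A_X)$, and correspondingly $\mathsf M_\Uplambda(X)$, so that the subdivision $\mathsf M_\Uplambda(\mathsf A_X)\to\mathsf M_\Uplambda(\mathbb G_{\mathsf{trop}}^r)$ factors through $B$; a cancellation argument for proper, birational, logarithmically \'etale maps shows that the resulting $b\colon\mathsf M_\Uplambda(\mathsf A_X)\to B$ is itself a subdivision. One then has a square
\[
\begin{tikzcd}
\mathsf M_\Uplambda(X)\arrow{r}{h}\arrow{d}{g} & \mathsf M\arrow{d}{g'}\\
\mathsf M_\Uplambda(\mathsf A_X)\arrow{r}{b} & B
\end{tikzcd}
\]
which is cartesian in fine and saturated logarithmic stacks; since $g'$ is strict (Proposition~\ref{prop: strictness}) it is cartesian in ordinary stacks as well, and $h$ is proper because $b$ is.

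Second I would check that the two obstruction theories match under this square. The Abramovich--Wise obstruction theory of $g$ has domain $(R\pi_\star f^\star T^{\mathsf{log}}_X)^\vee$, and $f^\star T^{\mathsf{log}}_X\cong\mathcal O_C^{\oplus r}$ since $T^{\mathsf{log}}_X\cong N\otimes\mathcal O_X$ is pulled back along the subdivision $X\to\mathbb G_{\mathsf{log}}^r$; the obstruction theory of $g'$ from Section~\ref{sec: rigid-variant} has domain $(R\pi_\star\mathcal O_C^{\oplus r})^\vee$ for the parallel reason that the relative tangent bundle of $\mathbb G_{\mathsf{log}}^r\to\mathbb G_{\mathsf{trop}}^r$ is $N\otimes\mathcal O$. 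As $h$ is a morphism over $\mathfrak M_{g,n}^{\mathsf{log}}$, the universal curve over $\mathsf M_\Uplambda(X)$ is pulled back from the one over $\mathsf M$, and the deformation-theoretic description of each obstruction theory — lifts of the relevant square-zero lifting diagram form a torsor under $\mathcal O_C^{\oplus r}\otimes J$ in the sense of Wise — is word-for-word the same on the two sides. Hence the obstruction theory of $g'$ pulls back along the cartesian square to that of $g$. This verification is exactly the transcription of \cite[Section~6]{AW} and \cite[Section~3]{R19} already alluded to, and it is the only substantive point.

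Granting this, the conclusion follows formally. Both $\mathsf M_\Uplambda(\mathsf A_X)$ and $B$ are logarithmically smooth of dimension $3g-3+n$, hence carry fundamental classes, and by definition $[\mathsf M_\Uplambda(X)]^{\mathsf{vir}}=g^![\mathsf M_\Uplambda(\mathsf A_X)]$ and $[\mathsf M]^{\mathsf{vir}}={g'}^{!}[B]$, where $g^!$ and ${g'}^{!}$ are the virtual pullbacks of the two (now compatible) obstruction theories. Since $b$ is a subdivision between logarithmically smooth stacks of the same dimension, $b_\star[\mathsf M_\Uplambda(\mathsf A_X)]=[B]$. Finally, the commutativity of virtual pullback with proper pushforward along the base of a cartesian square equipped with compatible obstruction theories gives
\[
h_\star[\mathsf M_\Uplambda(X)]^{\mathsf{vir}}=h_\star\,g^![\mathsf M_\Uplambda(\mathsf A_X)]={g'}^{!}\,b_\star[\mathsf M_\Uplambda(\mathsf A_X)]={g'}^{!}[B]=[\mathsf M]^{\mathsf{vir}},
\]
which is the assertion. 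The main obstacle, to reiterate, is the identification of the two perfect obstruction theories under the cartesian square; once that is in place, the bookkeeping with subdivisions and the functoriality of virtual pullback are routine.
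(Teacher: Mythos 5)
Your argument is correct and is essentially the argument the paper intends — the paper's proof of this lemma is simply the remark that ``the arguments of~\cite[Section~3]{R19} and of~\cite{AW} carry over verbatim,'' and those arguments are precisely what you spell out: identify both virtual classes as virtual pullbacks of fundamental classes along strict morphisms with the Abramovich--Wise/Wise obstruction theory given by $(R\pi_\star \mathcal O_C^{\oplus r})^\vee$, fit them into a cartesian square over a subdivision $\mathsf M_\Uplambda(\mathsf A_X)\to B$, and conclude by the compatibility of virtual pullback with proper pushforward (Costello/Manolache). Your preliminary refinement step to align the tropical bases, and the observation that strictness of $g'$ makes the fs fiber square an ordinary fiber square, are exactly the bookkeeping the citation is eliding.
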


	\subsubsection{The rubber geometry}  We move on to the rubber theory. Recall that $\mathsf R_\Uplambda(\mathbb G^r_{\mathsf{log}})$ can be understood as an intersection problem involving the Picard stack, after Marcus--Wise~\cite{MW17}. We describe the rank $1$ case for notational brevity, and indicate the required changes. In our notation, rubber moduli space is defined as the fiber product:
	\[
	\begin{tikzcd}
	\mathsf{R}_\Uplambda(\mathbb G_{\mathsf{log}})\arrow{d}\arrow{r} \arrow[rd, phantom, "\square"]&\mathsf{R}_\Uplambda(\mathbb G_{\mathsf{trop}})\arrow{d}{aj}\\
	\mathfrak M_{g,n}^{\mathsf{log}}\arrow[swap]{r}{0} & \mathbf{Pic}.
	\end{tikzcd}
	\]
	Note that $\mathsf{R}_\Uplambda(\mathbb G_{\mathsf{trop}})$ is (any subdivision of) the space $\mathbf{Div}$ in the work of Marcus--Wise~\cite{MW17} and is denoted $t\mathsf{DR}$ in~\cite{MR21}\footnote{A minor difference appears in that the balancing condition is not imposed in~\cite{MW17}. But as in~\cite{MR21}, the balancing condition must be satisfied in order for the Abel--Jacobi section to meet the $0$-section, so this difference is immaterial.}. The morphism $aj$ is the Abel--Jacobi section to the universal Picard variety, determined by the contact order $\Uplambda$. The bottom map is the $0$ section. It is a section of a smooth fibration, and therefore a regular embedding; this equips the top map also with an obstruction theory: explicitly, it gives rise to an embedding of the normal cone of the top arrow into the pullback of the normal bundle to $0$. 
	
	The normal bundle to the map $0$ at a curve $C$ is $H^1(C,\mathcal O_C)$: the obstruction to deforming $[C]$ as an object in the image of the $0$ section is precisely a deformation of the trivial bundle on $C$, i.e. an element of the tangent space to the identity in the Picard variety of $C$. Globally, the obstruction bundle is $(R^1\pi_\star \mathcal O_C)^\vee$. In the higher rank case, the bottom right in the square is replaced by the $r^{\rm th}$ fibered power of Picard varieties over $\fM_{g,n}^{\mathsf{log}}$, the $0$ map is replaced by the factorwise $0$ map, and the $aj$ map is replaced by the factorwise $aj$ map given by $\underline A$. The obstruction bundle is $(R^1\pi_\star \mathcal O_C^{\oplus r})^\vee$. 
	
	\subsubsection{Global charts and Siebert's formula}\label{sec: siebert-formula} In what follows, we compare virtual classes obtained by two constructions. As our setup is rather concrete, we can do so by means of traditional intersection theory, rather than chasing obstruction theories. Specifically, we use an elegant formula for the virtual class in terms of the Chern--Fulton class and the Segre class of the obstruction bundle, due to Siebert~\cite{Sie04}. We were led to the argument while puzzling over the related papers~\cite{AMS21,Tho22}. 
	
	We recall Siebert's idea. Let $K\to V$ be a morphism with a perfect obstruction theory $\mathbb E^\bullet$; assume $V$ is pure dimensional. Suppose $K$ can be embedded in a space $G$ which is smooth over $V$. In this case, we make sense of the Chern--Fulton class $c_F(K/V)$ of $K$ relative to $V$, by using the Chern class of the relative tangent bundle of $G$ over $V$ and the Segre class of $K$ in $G$. This is a Chow homology class that agrees with the total Chern class of the relative tangent bundle when the latter is defined. For further details, see~\cite[Example~4.2.6]{Ful98} and~\cite[Section~1]{Sie04}. 
	
	Siebert proves that the virtual fundamental class of $K$, which is by definition the virtual pullback of the fundamental class $[V]$, is given by
	\[
	[K]^{\mathsf{vir}} = \left\{ s(\mathbb E^{\bullet})\cap c_F(K/V)\right\}_{\mathsf{exp}},
	\]
	where the first term is the Segre class of the virtual vector bundle of deformations and obstructions. The final subscript takes the expected dimensional component. 
	
	To apply this, in addition to the obstruction bundles above, we need to know that our moduli spaces are embeddable in something smooth over the base of their obstruction theory. This is required for the existence of the Chern--Fulton class; see~\cite[Introduction]{Sie04}.
	
	In the rubber geometry, we have $\mathsf{R}_\Uplambda(\mathbb G^r_{\mathsf{log}})\to\mathsf{R}_\Uplambda(\mathbb G^r_{\mathsf{trop}})$ playing the role of the map $K\to V$ above. We are prepared to replace spaces by subdivisions, so the target of this morphism can be chosen to be smooth. Siebert's formula gives us control of the virtual class. Note that in this case, the definition of the Chern--Fulton class of $K$ over $V$ simplifies since the map in question is an embedding. If $K\to V$ is an embedding, then
	\[
	c_F(K/V) = c(T_V)^{-1}\cap c_F(K),
	\]
	where $c_F(K)$ is the usual Chern--Fulton class of the scheme $K$, see~\cite[Example~4.2.6]{Ful98}.
	
	In the case of the rigid geometry, we look at 
	\[
	\mathsf M_\Uplambda(X)\to \mathsf M_\Uplambda(\mathsf A_X).
	\]
	The target can be chosen to be smooth. The map is not an embedding, but as we will see in the course of the proof of Theorem~\ref{thm: virtpull}, it is a torus torsor over a closed substack of $\mathsf M_\Uplambda(\mathsf A_X)$. The torsor is pulled back from $\mathsf M_\Uplambda(\mathsf A_X)$ so again, we can use Siebert's formula. 
	
	\subsubsection{Comparison} We state and prove the rigid/rubber comparison. Recall there is a natural rigid-to-rubber morphism between minimal models:
	\[
	\epsilon: \mathsf M^{\mathsf{min}}_\Uplambda(\mathbb G^r_{\mathsf{log}})\to \mathsf{R}^{\mathsf{min}}_\Uplambda(\mathbb G^r_{\mathsf{log}}).
	\]
	The morphism is not schematically representable, as it is a $\mathbb G^r_{\mathsf{log}}$-torsor. Let 
	\[
	\mu\colon\mathsf{M}\to \mathsf{R}_\Uplambda(\mathbb G_{\mathsf{log}}^r)
	\]
	be a morphism of subdivisions of these spaces, with the map induced by $\epsilon$. The term ``induced'' means that the morphism is obtained by first subdividing the tropical space $\mathsf{R}^{\mathsf{min}}_\Uplambda(\mathbb G^r_{\mathsf{trop}})$, pulling back this subdivision to $\mathsf M^{\mathsf{min}}_\Uplambda(\mathbb G^r_{\mathsf{log}})$, and then subdividing further. 
	
	In order to state the comparison, we impose that $\mathsf M$ is schematically representable and that $\mu$ is flat; since $\mu$ is a subdivision of a logarithmic torus torsor, it is logarithmically smooth, and flatness can always be guaranteed by appropriate subdivision~\cite{AK00,Mol16}. We show the following:
	
	\begin{theorem}\label{thm: virtpull}
		For a subdivision $\mu$ of $\mathsf{M}\to \mathsf{R}_\Uplambda(\mathbb G_{\mathsf{log}}^r)$ as above, we have an equality
		\begin{equation*}
		[\mathsf{M}]^{\mathsf{vir}}= \mu^\star\left([\mathsf{R}_{\Uplambda}(\mathbb G_{\mathsf{log}}^r)]^{\mathsf{vir}}\right).
		\end{equation*}
		in the Chow homology of $\mathsf M$ in homological degree $3g-3+n-r(g-1)$, where $n$ is the number of marked points in $\Uplambda$.
	\end{theorem}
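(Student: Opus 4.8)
The plan is to compute both virtual classes via Siebert's formula from Section~\ref{sec: siebert-formula} and match the two expressions. First I would recall the relevant obstruction theories and pass to convenient subdivisions. On the rigid side, Section~\ref{sec: rigid-variant} together with Lemma~\ref{lem: extra-compatibility} presents $\mathsf{M}$ as strict and representable over a representable subdivision $B$ of $\mathsf{M}_\Uplambda^{\mathsf{min}}(\mathbb{G}_{\mathsf{trop}}^r)$, carrying a perfect obstruction theory with complex $(R\pi_\star \mathcal O_C^{\oplus r})^\vee$. On the rubber side, the Marcus--Wise fibre square (\cite{MW17}) realises $\mathsf{R}_\Uplambda(\mathbb{G}_{\mathsf{log}}^r)$ as the preimage of the factorwise zero section of the $r$-fold fibred Picard stack over (a subdivision of) $\mathsf{R}_\Uplambda^{\mathsf{min}}(\mathbb{G}_{\mathsf{trop}}^r)$; since that zero section is a regular embedding with normal bundle $(R^1\pi_\star \mathcal O_C^{\oplus r})^\vee$, the induced obstruction theory presents $\mathsf{R}_\Uplambda(\mathbb{G}_{\mathsf{log}}^r)$ as a strict closed substack of $\mathsf{R}_\Uplambda(\mathbb{G}_{\mathsf{trop}}^r)$ with obstruction bundle $(R^1\pi_\star \mathcal O_C^{\oplus r})^\vee$ and no deformations. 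Virtual classes are unchanged under pushforward along subdivisions~\cite[Section~3]{R19} and \cite[Section~4]{MR21}, so I may refine freely: I would arrange that $B$ and $\mathsf{R}_\Uplambda(\mathbb{G}_{\mathsf{trop}}^r)$ are smooth, that $\mu$ is flat, and that $\mathsf{M}$ embeds in a smooth proper toric fibration $G\to B$ --- possible because, by Section~\ref{sec: siebert-formula}, $\mathsf{M}\to B$ is a torus torsor over a closed substack and the torsor extends over all of $B$ --- chosen compatibly with the analogous rubber ambient so that the relevant relative tangent bundles and normal cones base change along the natural subdivision $B\to\mathsf{R}_\Uplambda(\mathbb{G}_{\mathsf{trop}}^r)$.

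With this in place, Siebert's formula reads
\[
[\mathsf{M}]^{\mathsf{vir}}=\bigl\{\,s\bigl((R\pi_\star \mathcal O_C^{\oplus r})^\vee\bigr)\cap c_F(\mathsf{M}/B)\,\bigr\}_{\mathsf{exp}}
\]
on the rigid side and
\[
[\mathsf{R}_\Uplambda(\mathbb{G}_{\mathsf{log}}^r)]^{\mathsf{vir}}=\bigl\{\,s\bigl((R^1\pi_\star \mathcal O_C^{\oplus r})^\vee\bigr)\cap c_F(\mathsf{R}_\Uplambda(\mathbb{G}_{\mathsf{log}}^r)/\mathsf{R}_\Uplambda(\mathbb{G}_{\mathsf{trop}}^r))\,\bigr\}_{\mathsf{exp}}
\]
on the rubber side, where the second Chern--Fulton class, since the embedding is into the smooth $\mathsf{R}_\Uplambda(\mathbb{G}_{\mathsf{trop}}^r)$ itself, equals the Segre class $s(\mathsf{R}_\Uplambda(\mathbb{G}_{\mathsf{log}}^r),\mathsf{R}_\Uplambda(\mathbb{G}_{\mathsf{trop}}^r))$. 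The first, and easy, point is that the two Segre classes of obstruction theories agree after pullback to $\mathsf{M}$: in $K$-theory the two virtual bundles of deformations and obstructions differ exactly by $(\pi_\star \mathcal O_C^{\oplus r})^\vee\cong\mathcal O^{\oplus r}$, a trivial bundle, whose total Chern and hence Segre classes are trivial; thus the Segre classes coincide as Chow classes, and the extra rank-$r$ summand of deformations only raises the virtual rank by $r$ --- precisely the relative dimension of $\mu$. As $\mu$ commutes with the universal curve, $\mu^\star$ carries the rubber Segre class to this common class.

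It remains to show $\mu^\star c_F(\mathsf{R}_\Uplambda(\mathbb{G}_{\mathsf{log}}^r)/\mathsf{R}_\Uplambda(\mathbb{G}_{\mathsf{trop}}^r))=c_F(\mathsf{M}/B)$. Writing $c_F(\mathsf{M}/B)=c(T_{G/B})\cap s(\mathsf{M},G)$, the compatible choice of ambients makes $T_{G/B}$ a pullback of the rubber relative tangent bundle and $s(\mathsf{M},G)$ the corresponding base change of $s(\mathsf{R}_\Uplambda(\mathbb{G}_{\mathsf{log}}^r),\mathsf{R}_\Uplambda(\mathbb{G}_{\mathsf{trop}}^r))$; here one uses base change of Segre classes of subschemes along the flat part of the construction, together with invariance of the whole construction under the birational subdivisions relating the various models. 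Granting this, the conclusion is formal: flat pullback commutes with Segre classes of pulled-back bundles, with cap products, and with the truncation to expected dimension (shifting it by $\dim\mu=r$), so $\mu^\star[\mathsf{R}_\Uplambda(\mathbb{G}_{\mathsf{log}}^r)]^{\mathsf{vir}}=[\mathsf{M}]^{\mathsf{vir}}$, and the resulting class lies in homological degree $\dim B+(r-rg)=3g-3+n-r(g-1)$.

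The main obstacle is the geometric bookkeeping entering the first and third steps: one must pin down the precise structure of the rigid-to-rubber morphism --- that $\mathsf{M}$ is a fibrewise-compactified torus torsor over a closed substack of $B$, with the torsor pulled back from $B$ --- and produce a single nested system of subdivisions that simultaneously makes $B$, $\mathsf{R}_\Uplambda(\mathbb{G}_{\mathsf{trop}}^r)$, $G$ and its rubber counterpart smooth, keeps $\mu$ flat, and keeps the ambients compatible so that the normal cones base change. With that setup in hand, the Siebert computation and the comparison of obstruction bundles are routine.
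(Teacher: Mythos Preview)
Your strategy---Siebert's formula plus the observation that the two obstruction complexes differ by the trivial bundle $(R^0\pi_\star\mathcal O_C^{\oplus r})^\vee$---is exactly the paper's. The gap is in your Chern--Fulton comparison, which is also the step you flag as the main obstacle, but your proposed resolution heads in the wrong direction.

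The paper does not build a proper toric fibration $G\to B$ or a ``compatible rubber ambient''. Instead it introduces the fibre product $F = B \times_{\mathsf{R}_\Uplambda(\mathbb G_{\mathsf{trop}}^r)} \mathsf{R}_\Uplambda(\mathbb G_{\mathsf{log}}^r)$ and observes that the canonical map $\mathsf M\to F$ is an \emph{uncompactified} $\mathbb G_m^r$-torsor: it is the torsor of trivialisations of $\mathcal O_C(-\overline\alpha)$, equivalently the torsor attached to $R^0\pi_\star\mathcal O_C^{\oplus r}$, which is pulled back from $B$. Since a torus torsor has trivial relative tangent bundle, $c_F(\mathsf M/B)$ is the flat pullback of $c_F(F/B)$; and since $F\hookrightarrow B$ is the flat base change of the closed immersion $\mathsf{R}_\Uplambda(\mathbb G_{\mathsf{log}}^r)\hookrightarrow \mathsf{R}_\Uplambda(\mathbb G_{\mathsf{trop}}^r)$ along $B\to\mathsf{R}_\Uplambda(\mathbb G_{\mathsf{trop}}^r)$, the Segre and hence Chern--Fulton classes match under pullback. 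Your phrases ``smooth proper toric fibration'' and ``fibrewise-compactified torus torsor'' would force $c(T_{G/B})\neq 1$ and break the argument; and ``$T_{G/B}$ a pullback of the rubber relative tangent bundle'' does not parse, since on the rubber side the ambient is the base itself and that bundle has rank zero. The point that may have misled you is that $\mathsf M$ is proper while $\mathsf M\to F$ is a non-proper torsor; this is fine because $F$ is only an Artin stack (compare $\mathbb P^1\to[\mathbb P^1/\mathbb G_m]$). Once you replace your $G$ by the open torsor---equivalently, factor $\mu$ through $F$---your remaining steps go through verbatim.
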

	
	\begin{proof}
		We have the following diagram, which is commutative but not cartesian:
		\[
		\begin{tikzcd}
		\mathsf M\arrow{d}\arrow{r} & B\arrow{d} \\
		\mathsf{R}_\Uplambda(\mathbb G_{\mathsf{log}}^r)\arrow{r} & \mathsf{R}_\Uplambda(\mathbb G_{\mathsf{trop}}^r).
		\end{tikzcd}
		\]
		Here, $B$ is a subdivision of the space $\mathsf M^{\mathsf{min}}_\Uplambda(\mathbb G^r_{\mathsf{trop}})$, and $\mathsf M$ is defined by $B\times_{\mathsf{R}^{\mathsf{min}}_\Uplambda(\mathbb G_{\mathsf{trop}}^r)} \mathsf{M}^{\mathsf{min}}_\Uplambda(\mathbb G_{\mathsf{trop}}^r)$. Therefore $\mathsf M\to B$ is strict. The symbols $\mathsf M$ and $B$ reprise their roles from Section~\ref{sec: rigid-variant}. 
		
		\noindent
		{\sc Step I. Observations about arrows.} The right vertical is a subdivision of the $\mathbb G_{\mathsf{trop}}^r$-torsor 
		\[
		\mathsf M^{\mathsf{min}}_\Uplambda(\mathbb G^r_{\mathsf{trop}})\to \mathsf{R}^{\mathsf{min}}_\Uplambda(\mathbb G_{\mathsf{trop}}^r).
		\]
		Thus the map $B\to\mathsf R_\Uplambda(\mathbb G_{\trop}^r)$ is logarithmically \'etale. The subdivisions in the setup are chosen such that the map
		\[
		B\to \mathsf{R}_\Uplambda(\mathbb G_{\mathsf{trop}}^r)
		\]
		is flat. The left vertical arrow is a subdivision of a $\mathbb G_{\mathsf{log}}^r$-torsor, induced by the corresponding subdivision of the $\mathbb G_{\mathsf{trop}}^r$-torsor $B\to \mathsf{R}_\Uplambda(\mathbb G_{\mathsf{trop}}^r)$. Thus, it is logarithmically smooth and flat of relative dimension $r$. The difference in dimensions illustrates the failure of the square to be cartesian. 
		
		The lower horizontal is a closed immersion: it is the inclusion of the intersections of the double ramification spaces. The upper horizontal in the commutative square is not a closed immersion. While the square is not cartesian, we can consider the fiber product $F$ instead, and there is a map:
		\[
		\mathsf M\to F.
		\]
		The morphism from $\mathbb G_{\mathsf{log}}\to \mathbb G_{\mathsf{trop}}$ is a quotient by $\mathbb G_m$, so it is smooth. Therefore $\mathsf M\to F$ is also a torus torsor, and therefore is also smooth. 
		
		It will be useful to describe the morphism $\mathsf M\to F$ more explicitly; we set $r= 1$ for simplicity. The map $\mathsf M\to F$ strict, and it can be understood via the fiber product description of $F$:
		\[
		F = \mathsf{R}_\Uplambda(\mathbb G_{\log}) \times_{\mathsf{R}_\Uplambda(\mathbb G_{\trop})} B.
		\]
		A point of $B$ determines a curve together with global section of its characteristic monoid, i.e. an element $\overline \alpha$ in $H^0(C,\overline M_C^{\mathsf{gp}})$. Associated to this is a $\mathcal O_C^\star$-torsor of lifts of $\overline\alpha$ to $M_C^{\mathsf{gp}}$, which we denote $\mathcal O_C(-\overline\alpha)$, see for example~\cite[Section~2.6]{RSW17B}; the sign convention is immaterial but we choose it for consistency with the formula given for it in loc. cit. The fiber product imposes the condition that this torsor is isomorphic to the trivial torsor. The space $\mathsf M$ has, in addition to these data, a {\it distinguished} trivialization of the torsor $\mathcal O_C(-\overline\alpha)$. We can therefore view the $\mathbb G_m$-torsor $\mathsf M\to F$, as the torsor associated to the line bundle $R^0\pi_\star\mathcal O_C$, where as before $\pi:C\to \mathsf M$ is the universal curve. More canonically, this torsor can be viewed as the torsor of isomorphisms $\mathsf{Iso}(\mathcal O_C(-\overline\alpha),\mathcal O_C)$. For higher values of $r$, one obtains $r$ copies of this torsor. 
		
		We make a final note before proceeding. The space $\mathsf M$ is proper, and yet it is a torus torsor over the space $F$. While this might be slightly disorienting at first, the reader may note that $F\to\mathsf{R}_\Uplambda(\mathbb G_{\log}^r)$ is relatively $0$-dimensional, but of Artin type. A proper space can certainly be exhibited as a torus torsor over an Artin stack, e.g. consider $\PP^1\to[\PP^1/\mathbb G_m]$.
		
		\noindent
		{\sc Step II. Obstruction theories.} We compare the virtual classes on these spaces. Consider the closed immersion $F\to B$. We can view it as a fiber product in two different ways:
		\[
		\begin{tikzcd}
		F\arrow{d}\arrow{r}\arrow[rd, phantom, "\square"] & B\arrow{d} \\
		\mathsf{R}_\Uplambda(\mathbb G_{\mathsf{log}}^r)\arrow{r} \arrow{d}\arrow[rd, phantom, "\square"]& \mathsf{R}_\Uplambda(\mathbb G_{\mathsf{trop}}^r)\arrow{d}{aj} \\
		\mathfrak M_{g,n}^{\mathsf{log}}\arrow{r}{0} & \prod_{i=1}^r \mathbf{Pic},
		\end{tikzcd}
		\]
		where $\prod_{i=1}^r \mathbf{Pic}$ is the product, over $\mathfrak M_{g,n}^{\mathsf{log}}$ of $r$ copies of the Picard scheme. The $0$ denotes the coordinatewise trivial bundle. 
		Now both squares are cartesian. The embedding of the $0$ section has normal bundle $H^1(C,\mathcal O_C)^{\oplus r}$ at the curve $C$ and forms a rank $g$ vector bundle. Pullback equips the middle and top arrow with a perfect obstruction theory, i.e. an embedding of the cones of these horizontal morphisms into this vector bundle. Since the right vertical map on the top square is flat, the two horizontals in the top square have compatible perfect obstruction theories. If $\pi:C\to F$ is the universal curve, the obstruction bundle is $R^1\pi_\star(\mathcal O_C^{\oplus r})^\vee$, see~\cite[Proposition~5.3.6.2]{MW17}. 
		
		The morphism $\mathsf M\to F$ is smooth, and we have equipped $F$ with a perfect obstruction theory over $B$, we have two ways of endowing $\mathsf M$ with a virtual class. First, we can perform the standard virtual pullback of the fundamental class along the morphism 
		\[
		\mathsf M\to B,
		\]
		using the obstruction theory defined earlier in this section. Second, we can use the obstruction theory on $F\to B$ to obtain a virtual class $[F]^{\mathsf{vir}}$ and then perform flat pullback for the class $[F]^{\mathsf{vir}}$ along $\mathsf M\to F$. We claim that these homology classes on $\mathsf M$ coincide. 
		
		The claim follows from Siebert's formula, discussed in Section~\ref{sec: siebert-formula}. Indeed, $\mathsf M$ is a torus bundle over a closed substack $F$ inside $B$, and as we have described above, this torus torsor is isomorphic to the one associated to the bundle $R^0\pi_\star(\mathcal O_C\otimes N)$, which is certainly pulled back from $B$. We are free to use Siebert's formula, and writing out the classes yields the result. To spell out the details, recall that the formula requires the Chern--Fulton class and the obstruction bundle, as a virtual vector bundle. The obstruction bundle on $F$ is $R^1\pi_\star(\mathcal O_C\otimes N)$, while the obstruction bundle on $\mathsf M$ relative to $B$ is $R^1\pi_\star(\mathcal O_C\otimes N)-R^0\pi_\star(\mathcal O_C\otimes N)$. Observe the $R^0$ term is a trivial vector bundle, so the total Segre classes of these two virtual vector bundles coincide, and their ranks differ by $r$. 
		
		The relative tangent bundle of $\mathsf M\to F$ is trivial, since it is a torus torsor. Thus, the Fulton--Chern class of $\mathsf M$ is equal to the pullback of that of $F$. The result follows from Siebert's formula. 
	\end{proof}

	We now prove Theorem~\ref{thm:B}. The construction of Proposition~\ref{prop:toriccart} will be used below. 

	\subsection{Proof of Theorem~\ref{thm:B}} 
	There are three steps in the proof, starting from an evaluation class operating on the class $[\mathsf M_\Uplambda(X)]^{\mathsf{vir}}$. First, we express the pushforward of this to the moduli space of curves as an analogous pairing on the rubber space. Next, we replace the rubber cohomology class with a piecewise polynomial. Finally, we move this to a piecewise polynomial class over the higher double ramification cycle on the moduli space of curves. 
	
	\noindent
	{\sc Step I. rigid-to-rubber.} We start with the diagram of rubber, rigid, and evaluation spaces: 
	\[
	\begin{tikzcd}
	\mathsf{M}^{\mathsf{min}}_{\Uplambda}(\mathbb{G}_{\mathsf{log}}^r) \arrow[r, ] \arrow[d,swap,"\varepsilon"] \arrow[rd, phantom, "\square"]
	& \mathsf{Ev}_{\Uplambda}(\mathbb{G}_{\mathsf{log}}^r) \arrow[d,] \\
	\mathsf{R}^{\mathsf{min}}_\Uplambda(\mathbb{G}_{\mathsf{log}}^r) \arrow[r,]
	&  \mathsf{Ev}_{\Uplambda}^{\mathsf{rub}}(\mathbb{G}_{\mathsf{log}}^r).
	\end{tikzcd}
	\]
	Fix a toric variety $X$ and discrete data $\Uplambda$. Choose a cohomology class $\gamma$ in some subdivision of $\mathsf{Ev}_\Uplambda(X)$. Noting our convention is to suppress subdivisions, by Proposition~\ref{prop:toriccart} we can choose a set of subdivisions to get a diagram:
	\[
	\begin{tikzcd}
	\mathsf{M}_{\Uplambda}(X) \arrow[rd, "\varepsilon"] \arrow[r, "p"] & \mathsf M \arrow[r, "\mathsf{ev}"] \arrow[d,swap,"\mu"] \arrow[rd, phantom, "\square"]
	& \mathsf{Ev}_{\Uplambda}(X) \arrow[d,"\delta"] \\
	& \mathsf{R}_\Uplambda(\mathbb G_{\mathsf{log}}^r) \arrow[r,swap,"\mathsf{ev}_{\mathsf{rub}}"]
	&  \mathsf{Ev}_{\Uplambda}^{\mathsf{rub}}(X),
	\end{tikzcd}
	\]
	with the property that the vertical arrows are flat. The initial choice of $\mathsf{Ev}_\Uplambda(X)$ may have to be refined. If this is done $\gamma$ is pulled back. The morphism $p$ is a subdivision. 
	
	Now observe that $\mathsf M$ is precisely one of the spaces considered in Section~\ref{sec: rigid-variant}. It is therefore equipped with a natural virtual fundamental class $[\mathsf M]^{\mathsf{vir}}$. By Lemma~\ref{lem: extra-compatibility}, we have
	\[
	[\mathsf M]^{\mathsf{vir}} = p_\star [\mathsf M_\Uplambda(X)]^{\mathsf{vir}}. 
	\]
	By this compatibility, and the projection formula applied to $p$ and the cohomology class $\mathsf{ev}^\star(\gamma)$, we see our Gromov--Witten cycle is equal to the pushforward of
	\[
	[\mathsf M]^{\mathsf{vir}}\cap \mathsf{ev}^\star(\gamma)
	\]  
	to the moduli space of curves. We now discard the triangle on the left side of the diagram. 
	
	By Theorem~\ref{thm: virtpull}, we also have an equality
	\[
	[\mathsf M]^{\mathsf{vir}}=\mu^\star[\mathsf{R}_\Uplambda(\mathbb G_{\mathsf{log}}^r)]^\mathsf{vir}.
	\]
	The morphism from $\mathsf M$ to $\Mbar_{g,n}$ factors through $\mathsf{R}_\Uplambda(\mathbb G_{\mathsf{log}}^r)$, so we can apply the projection formula, this time for the morphism $\mu$:
	\[
	\mu_\star\left(\mathsf{ev}^\star(\gamma)\cap \mu^\star[\mathsf{R}_\Uplambda(\mathbb G_{\mathsf{log}}^r)]^\mathsf{vir}\right) = \mu_\star(\mathsf{ev}^\star(\gamma))\cap [\mathsf{R}_\Uplambda(\mathbb G_{\mathsf{log}}^r)]^{\mathsf{vir}}.
	\]
	It remains to calculate $\mu_\star(\mathsf{ev}^\star(\gamma))$ in more concrete terms. The spaces in the square are all proper so the vertical morphisms are as well, and we may pushforward cohomology classes. By compatibility of pushforward and pullback in the square, we obtain:
	\[
	\mu_\star(\mathsf{ev}^\star(\gamma)) = \mathsf{ev}_{\mathsf{rub}}^\star\delta_\star(\gamma).
	\]
	We have therefore expressed the pushforward along $\varepsilon$ of the logarithmic Gromov--Witten cycle on $\mathsf M_\Uplambda(X)$ as the integral of the virtual class on the rubber moduli space with $\mathsf{ev}^\star_{\mathsf{rub}}\delta_\star(\gamma)$. 
	
	\noindent
	{\sc Step II. rubber-to-Artin-fan.} By a further refinement of the subdivision, we are free to assume that $\mathsf{Ev}_{\Uplambda}^{\mathsf{rub}}(X)$ is smooth. We make such a subdivision and reset the notation for this space to $\mathsf E$ for brevity. Let $H$ be its dense torus. Since $\mathsf E$ is smooth, the pullback map on Chow cohomology
	\[
	\mathsf{CH}^\star([\mathsf E/H])\to \mathsf{CH}^\star(E)
	\]
	is surjective~\cite[Section~2.3]{Bri97}. The quotient stack $[\mathsf E/H]$ is the Artin fan $\mathsf A_{\mathsf E}$. Let $\gamma'$ be a lift of $\delta_\star\gamma$ to the Chow ring of $\mathsf A_E$, so it can be interpreted as a piecewise polynomial function on the fan of $E$. 
	
	By the discussion of~\cite[Section~4]{MR21} we have an embedding map
	\[
	\mathsf{R}_{\Uplambda}(\mathbb G_{\mathsf{log}}^r)\hookrightarrow \mathsf{R}_{\Uplambda}(\mathbb G_{\mathsf{trop}}^r).
	\]
	Recall the codomain of the map above is obtained from $\Mbar_{g,n}$ by performing a subdivision (possibly including root constructions) and passing to the complement of some subset of closed strata. The rubber evaluation can be composed with the map to $\mathsf A_E$:
	\[
	\mathsf{R}_\Uplambda(\mathbb G_{\mathsf{log}}^r)\to \mathsf A_{\mathsf E}.
	\]
	The map certainly factors through $\mathsf{R}_\Uplambda(\mathbb G_{\mathsf{trop}}^r)$. We can therefore understand our Gromov--Witten cycle as a refined intersection of the rubber virtual class with the piecewise polynomial $\gamma'$.
	
	\noindent
	{\sc Step III. Artin-fan-to-moduli-of-curves.} The morphism
	\[
	\mathsf{R}_\Uplambda(\mathbb G_{\mathsf{trop}}^r)\to \mathsf A_{\mathsf E}
	\]
	is logarithmic, and therefore equivalent to a map on the underlying generalized cone complexes~\cite[Proposition~2.10]{ACGS15}. The generalized cone complex of $\mathsf{R}_\Uplambda(\mathbb G_{\mathsf{trop}}^r)$ is a subcomplex of a subdivision of the cone complex of $\Mbar_{g,n}$. Choose this latter subdivision to be smooth, and denote it by $\Sigma$. 
	
	The morphism from $\mathsf{R}_\Uplambda(\mathbb G_{\mathsf{trop}}^r)$ to $\mathsf A_{\mathsf E}$ is specified by a set of piecewise linear functions on some subset of rays in $\Sigma$, i.e. those rays that lie in the subcomplex that defines $\mathsf{R}_\Uplambda(\mathbb G_{\mathsf{trop}}^r)$ as an open. By choosing the values on the remaining rays (arbitrarily) and then reversing the dictionary, we obtain a morphism from a subdivision of $\Mbar_{g,n}$ to $\mathsf A_{\mathsf E}$. Summarizing, this blowup which we denote $\overline{\mathcal M}^\dagger$, contains $\mathsf{R}_\Uplambda(\mathbb G_{\mathsf{trop}}^r)$ and therefore $\mathsf{R}_\Uplambda(\mathbb G_{\mathsf{log}}^r)$ as a substack, and upon restriction to these substacks, we recover the evaluation maps above. 
	
	Apply the projection formula for the pushforward from $\mathsf{R}_\Uplambda(\mathbb G_{\mathsf{log}}^r)$ into the blowup $\overline{\mathcal M}^\dagger$ with the class $\gamma'$. We have expressed the required Gromov--Witten cycle as a class in the logarithmic Chow ring of $\Mbar_{g,n}$. By applying~\cite{HMPPS,HS21,MR21}, the pushforward of $\mathsf{R}_\Uplambda(\mathbb G_{\mathsf{log}}^r)$ is tautological in the logarithmic Chow ring. The result follows.
	\qed

	\subsection{Classical connections}\label{sec: severi-degrees} We conclude with a discussion of two practical examples: {\it Hurwitz numbers} of $\mathbb P^1$ and {\it Severi degrees} of $\mathbb P^2$. We begin with the latter, since the former are treated in some detail in~\cite[Section~3]{CMR22}. Further links with tropical curve counting may be found in~\cite{KHSUK}. 
	
	\subsubsection{Severi degrees} The Severi degrees are invariants from classical enumerative geometry, and are recalled below. They are also the ordinary Gromov--Witten invariants of $\mathbb P^2$ with point insertions. We connect the Severi degrees of $\mathbb P^2$ to rank $2$ higher double ramification cycles. 
	
	The Severi degrees concern the enumeration of genus $g$ curves of degree $d$ in $\mathbb{P}^2$, through $3d-1+g$ general points. The term {\it Severi degree} comes from the equality with the degree of the variety parameterizing genus $g$ curves in the family of degree $d$ plane curves, namely the {\it Severi variety}.
	
	The Severi degrees can be computed by tropical geometry, via Mikhalkin's correspondence theorem~\cite{Mi03}, though we think of this via its logarithmic geometry incarnation~\cite{MR16,NS06}. The number $N_{d, g}$ is also equal to $(d!)^3$ times a {\it logarithmic} Gromov--Witten invariant of $\mathbb{P}^2$ with its toric log structure\footnote{The implicit equality of logarithmic and ordinary Gromov--Witten invariants of $\mathbb P^2$ here is a happy accident; it holds only for point insertions, and already fails for non-Fano toric surfaces. The logarithmic invariants of a toric surface are always equal to toric Severi degrees but this typically fails for the ordinary Gromov--Witten invariants of Hirzebruch surfaces~\cite{R15-Severi}.}. The discrete data $\Uplambda$ is as follows: the genus is $g$ and the contact order matrix is:
	\[
	\underline{A}=
	\begin{pmatrix}
	1 \cdots 1 & 0 \cdots 0 & -1 \cdots -1 & 0 \cdots 0\\
	0 \cdots 0 & 1 \cdots 1 & -1 \cdots -1 & 0 \cdots 0
	\end{pmatrix}.
	\]
	The matrix has $3d$ nonzero columns, written in $3$ groups of $d$ each. The number of zero columns is equal to $3d-1+g$. We obtain a logarithmic Gromov--Witten invariant by pulling back a point constraint corresponding to each of these $3d-1+g$ points. 
	
	Our goal is to use the method in the proof of Theorem~\ref{thm:B} to provide a piecewise polynomial class in the logarithmic Chow ring of the moduli space of curves, which when intersected with the logarithmic toric contact cycle computes the Severi degree $N_{d,g}$. We recall the relevant diagram:
	\[
	\begin{tikzcd}
	\mathsf{M}_{\Uplambda}(\mathbb{P}^2) \arrow[rd, "\varepsilon"] \arrow[r, "p"] & \mathsf M \arrow[r, "\mathsf{ev}"] \arrow[d,swap,"\mu"] \arrow[rd, phantom, "\square"]
	& \mathsf{Ev}_{\Uplambda}(\mathbb{P}^2) \arrow[d,"\delta"] \\
	& \mathsf{R}_\Uplambda(\mathbb G_{\mathsf{log}}^2) \arrow[r,swap,"\mathsf{ev}_{\mathsf{rub}}"]
	&  \mathsf{Ev}_{\Uplambda}^{\mathsf{rub}}(\mathbb{P}^2),
	\end{tikzcd}
	\]
	By convention, we take the evaluation spaces to be a restricted one, corresponding only to the markings with contact order $0$. We set $n = 3d-g+1$ and take $\mathsf{Ev}_{\Uplambda}(\mathbb{P}^2)$ to be a subdivision of $(\mathbb{P}^2)^n$. Similarly $\mathsf{Ev}_{\Uplambda}^{\mathsf{rub}}(\mathbb{P}^2)$ will be a subdivision of a toric quotient of $(\mathbb{P}^2)^n$. 
	
	The cocharacter space of the torus in $\mathsf{Ev}_{\Uplambda}^{\mathsf{rub}}(\mathbb{P}^2)$ can be viewed as a moduli space of configurations of $n$ points in $\mathbb{R}^2$ up to translation. This furnishes a tropical interpretation of $\mathsf{ev}_{\mathsf{rub}}$. Recall that $\mathsf{ev}_{\mathsf{rub}}$ factors through $\mathsf{R}_\Uplambda(\mathbb G_{\mathsf{trop}}^r)\to \mathsf A_{\mathsf E}$.
	We denote the induced map from the generalized cone complex of $\mathsf{R}_\Uplambda(\mathbb G_{\mathsf{trop}}^r)$ to the fan of $\mathsf{Ev}_{\Uplambda}^{\mathsf{rub}}(\mathbb{P}^2)$ by $\mathsf{ev}_{\mathsf{rub}}^{\mathsf{trop}}$. We can view this generalized cone complex as a moduli space of tropical maps from tropical curves to $\mathbb{R}^2$ up to translation. The contact order $0$ markings correspond to infinite legs of the tropical curve which are contracted to points in $\mathbb{R}^2$. 
	
	Now $\mathsf{ev}_{\mathsf{rub}}^{\mathsf{trop}}$ sends a tropical curve to this configuration of points in $\mathbb{R}^2$ up to translation. We have a class $\gamma= (\mathsf{pt})^{\otimes n}$ in $\mathsf{CH}^\star(\mathsf{Ev}_{\Uplambda}(\mathbb{P}^2))$
	which is Poincar\'e dual to a point class. This implies that $\delta_\star(\gamma)$ is Poincar\'e dual to a point class. The choice of a maximal dimensional cone in the fan of $E$ gives us a piecewise polynomial $\phi$ which is a lift of  $\delta_\star(\gamma)$ to $\mathsf{CH}^\star(A_{\mathsf E})$. Since $\mathsf{ev}_{\mathsf{rub}}$ is logarithmic, the pullback of $\delta_\star(\gamma)$ to $\mathsf{R}_\Uplambda(\mathbb G_{\mathsf{trop}}^r)$ is given by the piecewise polynomial $\phi \circ \mathsf{ev}_{\mathsf{rub}}^{\mathsf{trop}}$.
	As in the proof of Theorem~\ref{thm:B}, choose a smooth subdivision $\overline{\mathcal M}^\dagger$ of $\Mbar_{g,n}$ from which $\phi \circ \mathsf{ev}_{\mathsf{rub}}^{\mathsf{trop}}$ is pulled back. By the projection formula, we can intersect the virtual class of $\mathsf{R}_\Uplambda(\mathbb{P}^2)$ with $\mathsf{ev}_{\mathsf{rub}}^\star(\delta_\star(\gamma))$ in the Chow ring of $\overline{\mathcal M}^\dagger$. The intersection number is the Severi degree.
	
	\subsubsection{The Severi degrees in practice} Let us sketch what the piecewise polynomial looks like in one of the simplest cases -- degree $2$ curves in $\mathbb P^2$ of genus $0$ through $5$ points. Of course, the number is $1$. In keeping with the discussion above, the integral can be expressed on $\Mbar_{0,11}$. Among the $11$ points, $6$ of them correspond to the boundary contact points in $\PP^2$ and the remaining $5$ are contact order $0$. 
	
	The insertion $\gamma$ is $(\mathsf{pt})^{\otimes 5}$ in $(\PP^2)^5$. The appropriate logarithmic cohomology class $\gamma_{\sf rub}$ is the image of a piecewise polynomial in logarithmic Chow, but we abuse notation slightly and let $\gamma_{\sf rub}$ denote the piecewise polynomial itself:
	\[
	\gamma_{\mathsf{rub}}\colon \cM_{0,11}^\trop\to \RR.
	\]
	It associates a real number to each $11$-pointed, genus $0$ tropical curve $\Gamma$. Such a tropical curve admits a unique balanced map to $\RR^2$ subject to the following conditions: (i) the marked point $p_7$ is sent to the origin, and (ii) the slopes along the first $6$ marked points are as given by the contact order. Note this uniqueness follows from e.g.~\cite{GKM07,RW19}. A typical $11$-pointed genus $0$ tropical curve can be found in Figure~\ref{fig: 11-marked-guy}.
	
\begin{figure}

\tikzset{every picture/.style={line width=0.75pt}} 

\begin{tikzpicture}[x=0.75pt,y=0.75pt,yscale=-1,xscale=1]

\draw    (272,156) -- (314,124) ;
\draw    (314,124) -- (314,91) ;
\draw    (314,124) -- (352,124) ;
\draw    (352,124) -- (351.03,198) ;
\draw [shift={(351,200)}, rotate = 270.75] [color={rgb, 255:red, 0; green, 0; blue, 0 }  ][line width=0.75]    (10.93,-3.29) .. controls (6.95,-1.4) and (3.31,-0.3) .. (0,0) .. controls (3.31,0.3) and (6.95,1.4) .. (10.93,3.29)   ;
\draw    (352,124) -- (418.44,70.26) ;
\draw [shift={(420,69)}, rotate = 141.03] [color={rgb, 255:red, 0; green, 0; blue, 0 }  ][line width=0.75]    (10.93,-3.29) .. controls (6.95,-1.4) and (3.31,-0.3) .. (0,0) .. controls (3.31,0.3) and (6.95,1.4) .. (10.93,3.29)   ;
\draw    (314,91) -- (380.44,37.26) ;
\draw [shift={(382,36)}, rotate = 141.03] [color={rgb, 255:red, 0; green, 0; blue, 0 }  ][line width=0.75]    (10.93,-3.29) .. controls (6.95,-1.4) and (3.31,-0.3) .. (0,0) .. controls (3.31,0.3) and (6.95,1.4) .. (10.93,3.29)   ;
\draw    (272,156) -- (271.03,230) ;
\draw [shift={(271,232)}, rotate = 270.75] [color={rgb, 255:red, 0; green, 0; blue, 0 }  ][line width=0.75]    (10.93,-3.29) .. controls (6.95,-1.4) and (3.31,-0.3) .. (0,0) .. controls (3.31,0.3) and (6.95,1.4) .. (10.93,3.29)   ;
\draw    (272,156) -- (204,156) ;
\draw [shift={(202,156)}, rotate = 360] [color={rgb, 255:red, 0; green, 0; blue, 0 }  ][line width=0.75]    (10.93,-3.29) .. controls (6.95,-1.4) and (3.31,-0.3) .. (0,0) .. controls (3.31,0.3) and (6.95,1.4) .. (10.93,3.29)   ;
\draw    (314,91) -- (206,91) ;
\draw [shift={(204,91)}, rotate = 360] [color={rgb, 255:red, 0; green, 0; blue, 0 }  ][line width=0.75]    (10.93,-3.29) .. controls (6.95,-1.4) and (3.31,-0.3) .. (0,0) .. controls (3.31,0.3) and (6.95,1.4) .. (10.93,3.29)   ;
\draw  [dash pattern={on 0.84pt off 2.51pt}]  (259,91) -- (232,60) ;
\draw [shift={(259,91)}, rotate = 228.95] [color={rgb, 255:red, 0; green, 0; blue, 0 }  ][fill={rgb, 255:red, 0; green, 0; blue, 0 }  ][line width=0.75]      (0, 0) circle [x radius= 3.35, y radius= 3.35]   ;
\draw  [dash pattern={on 0.84pt off 2.51pt}]  (348,63.5) -- (321,34) ;
\draw [shift={(348,63.5)}, rotate = 227.53] [color={rgb, 255:red, 0; green, 0; blue, 0 }  ][fill={rgb, 255:red, 0; green, 0; blue, 0 }  ][line width=0.75]      (0, 0) circle [x radius= 3.35, y radius= 3.35]   ;
\draw  [dash pattern={on 0.84pt off 2.51pt}]  (352,174) -- (327,149) ;
\draw [shift={(352,174)}, rotate = 225] [color={rgb, 255:red, 0; green, 0; blue, 0 }  ][fill={rgb, 255:red, 0; green, 0; blue, 0 }  ][line width=0.75]      (0, 0) circle [x radius= 3.35, y radius= 3.35]   ;
\draw  [dash pattern={on 0.84pt off 2.51pt}]  (237,156) -- (201,120) ;
\draw [shift={(237,156)}, rotate = 225] [color={rgb, 255:red, 0; green, 0; blue, 0 }  ][fill={rgb, 255:red, 0; green, 0; blue, 0 }  ][line width=0.75]      (0, 0) circle [x radius= 3.35, y radius= 3.35]   ;
\draw  [dash pattern={on 0.84pt off 2.51pt}]  (270,212) -- (246,188) ;
\draw [shift={(270,212)}, rotate = 225] [color={rgb, 255:red, 0; green, 0; blue, 0 }  ][fill={rgb, 255:red, 0; green, 0; blue, 0 }  ][line width=0.75]      (0, 0) circle [x radius= 3.35, y radius= 3.35]   ;

\end{tikzpicture}
\caption{An $11$-pointed genus $0$ tropical curve, drawn in $\mathbb R^2$ using the unique balanced map with the given contact orders. Note that the dotted lines correspond to the $5$ marked points of contact order $0$ while the $6$ ``ends'' with arrow heads correspond to the boundary markings. }\label{fig: 11-marked-guy}
\end{figure}
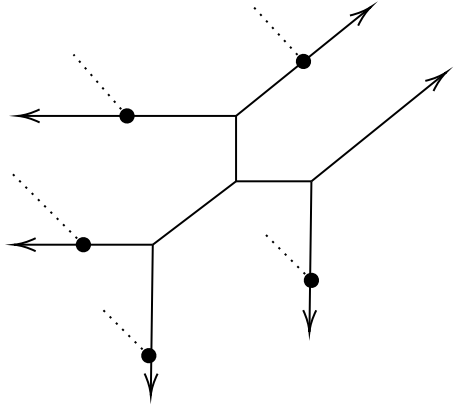
	
	For each of the marked points $p_8,\ldots, p_{11}$ we attach a real number; the product  of these will be the value of $\gamma_{\mathsf{rub}}$ on $\Gamma$. Each marked point is supported at a vertex of the tropical curve $\Gamma$. For each $j$ in $\{8\ldots,11\}$, there is a unique path in $\Gamma$ connecting the vertex supporting $p_7$ with $p_j$. This path maps piecewise linearly to $\RR^2$, and we can project onto the $x$-axis. Define the value $x_{\Gamma,j}$ to be the length of this image if it is contained in the positive axis, and $0$ if this path intersects the negative $x$-axis. Define $y_{\Gamma,j}$ similarly. The function $\gamma_{\sf rub}$ takes the value on $\Gamma$ to be $\prod_{j=8}^{11} x_{\Gamma,j}y_{\Gamma,j}$. 
	
	In this case, we have taken the rubber evaluation space to be $(\PP^2)^4$. We have chosen the piecewise polynomial that represents the equivariant class corresponding to the point $(1\colon 0\colon 0)$ in each factor and the class $\gamma_{\mathsf{rub}}$ is its pullback to a blowup of $\Mbar_{0,11}$ via the procedure above. It is combinatorially somewhat nontrivial to evaluate this piecewise polynomial and get the number $1$, but this is the subject of tropical intersection theory~\cite{GKM07}. We do not explain this here, but in this case, it is relatively easy to check that the piecewise polynomial is $0$ on most cones of $\cM^{\trop}_{0,11}$, and is nonzero on exactly one $8$-dimensional cone of a subdivision. The polynomial on this cone is equal to the product of the canonical ``slope $1$'' functions associated to the rays so the intersection number is $1$. We leave further details of the calculation to the reader.
	
	\subsubsection{Hurwitz theory} Intersections against the logarithmic double ramification cycle in rank $1$ also contain information related to classical algebraic geometry. It was recently shown that double Hurwitz numbers of $\mathbb P^1$ are intersections of the logarithmic double ramification cycle with piecewise polynomials~\cite{CMR22}. Note this includes the classical Hurwitz numbers counting covers of the Riemann sphere, with simple branching. We treat only this case for simplicity. The result is not, strictly speaking, a special case of our main result. However, it is close in spirit so we record it. 
	
	We take the target to be $\mathbb P^1$ equipped with its toric logarithmic structure, and fix a genus $g$ and degree $d$. The expected dimension of covers is $2d-2+2g$. The contact order vector is given by
	\[
	A = (1,\ldots,1,-1,\ldots,-1,0,\ldots 0),
	\]
	where $d$ entries are $1$, another $d$ are $-1$, and $2d-2+2g$ are $0$. The Hurwitz number is:
	\[
	H_g(d) = \frac{1}{(d!)^2} \int_{\mathsf M_{g,A}(\mathbb P^1)} \prod_{i = 1}^{2d-2+2g} \mathsf{ev}_i^\star(\mathsf{pt})\psi_i,
	\]
	where the insertions are placed at the contact order $0$ markings. The key point is that the class $\mathsf{ev}_i^\star(\mathsf{pt})\psi_i$ imposes a simple ramification condition at $p_i$. This is a codimension $1$ constraint, and since there are $2d-2+2g$ ramification points, the count is finite and equal to the Hurwitz number. 
	
	Let $A'$ be the vector of length $2d$ which is the vector $A$ with the final $0$ entries removed. Since the virtual class pulls back along the forgetful morphism
	\[
	\mathsf M_{g,A}(\mathbb P^1)\to\mathsf M_{g,A'}(\mathbb P^1)
	\]
	the Hurwitz number can be computed by an integral on $\mathsf M_{g,A'}(\mathbb P^1)$. By fixing the first ramification point to be at the point $1$ in $\mathbb P^1$, we can further reduce this to an integral on $\mathsf{R}_{g,A'}(\mathbb P^1)$. The main result of~\cite[Section~3]{CMR22} is an explicit piecewise polynomial class, called the {\it branch polynomial}, which when integrated against the double ramification class $\mathsf{logDR}_g(A')$ in the logarithmic Chow ring of $\Mbar_{g,2d}$ becomes equal to the Hurwitz number. 
	
	Note that in the paper above, the double ramification expression is used to construct analogues of the double Hurwitz numbers that are related to pluricanonical divisors in the same way that the ordinary numbers are related to principal divisors. The same could be done here to construct ``pluricanonical'' analogues of the Severi degrees. 
	\bibliographystyle{siam} 
	\bibliography{LogGWandDR} 
\end{document}